\providecommand\@dotsep{5}
\def\listtodoname{List of Todos}
\def\listoftodos{\@starttoc{tdo}\listtodoname}
\numberwithin{equation}{section}
\def\cal{\mathcal}
\newtheorem{theorem}{Theorem}[section]
\newtheorem{proposition}[theorem]{Proposition}
\newtheorem{lemma}[theorem]{Lemma}
\newtheorem{corollary}[theorem]{Corollary}
\newtheorem{remark}[theorem]{Remark}
\newtheorem{example}{Example}
\newcommand\restr[2]{{
  \left.\kern-\nulldelimiterspace 
  #1 
  \vphantom{\big|} 
  \right|_{#2} 
  }}
\title[Further applications of the Nehari manifold method to functionals in $C^1(X \setminus \{0\})$]
{Further applications of the Nehari manifold method to functionals in $C^1(X \setminus \{0\})$}
\author[E.J.F. Leite]{Edir Júnior Ferreira Leite}
\author[H. Ramos Quoirin]{Humberto Ramos Quoirin}
\author[K. Silva]{Kaye Silva}
\address[E. J.F. Leite]{\newline\indent
	Departamento de Matem\'atica.   
	\newline\indent 
	Universidade Federal de S\~ao Carlos,
	\newline\indent
13565-905, S\~ao Carlos, SP, Brazil}
\email{\href{mailto:edirleite@ufscar.br}{edirleite@ufscar.br}}
\address[H. Ramos Quoirin]{\newline \indent CIEM-FaMAF \newline \indent Universidad Nacional de C\'{o}rdoba, \newline\indent 
(5000) C\'{o}rdoba, Argentina}
\email{\href{mailto:humbertorq@gmail.com}{ humbertorq@gmail.com}}
\address[K. Silva]{\newline\indent
	Instituto de Matem\'atica e Estat\'istica.   
	\newline\indent 
	Universidade Federal de Goi\'as,
	\newline\indent
Rua Samambaia, 74001-970, Goi\^ania, GO, Brazil}
\email{\href{mailto:kayesilva@ufg.br}{kayesilva@ufg.br}}
\thanks{
E. J. F. Leite was partially supported by CNPq/Brazil under grant 316526/2021-5. H. Ramos Quoirin was
partially supported by FAPEG Programa Pesquisador Visitante Estrangeiro 2024. Kaye Silva was partially supported by CNPq/Brazil under Grants 308501/2021-7 and 201334/2024-0. This work was completed while the third author held a post-doctoral position at Florida Institute of Technology, Melbourne, United States of America}
\subjclass[2010]{Primary  
35A15, 
35B38
35J20,
35J66
}
\keywords{Nehari manifold, prescribed energy problem, affine $p$-Laplacian}
\begin{document}

\begin{abstract}
We proceed with the study of the Nehari manifold method for functionals in $C^1(X \setminus \{0\})$, where $X$ is a Banach space. We deal now with functionals whose fibering maps have two critical points (a minimiser followed by a maximiser).  Under some additional conditions we show that the Nehari manifold method provides us with the ground state level and two sequences of critical values for these functionals. These results are applied to the class of {\it prescribed energy problems} as well as to the concave-convex problem for the {\it affine} $p$-Laplacian operator.
 \end{abstract}

\bigskip

\maketitle

\bigskip

 \section{Introduction}
 \medskip
Let $X$ be an infinite-dimensional uniformly convex (real) Banach space, equipped with $\|\cdot \| \in C^1(X \setminus \{0\})$. In \cite{LRQS} we have considered the Nehari set associated to $\Phi \in C^1(X \setminus \{0\})$, namely
$$\mathcal{N}=\mathcal{N}(\Phi):=\{u \in X \setminus \{0\}: \Phi'(u)u=0\},$$
assuming that for every $u \in X \setminus \{0\}$ the {\it fibering map} $t \mapsto \Phi(tu)$, $t>0$, has a unique critical point $t(u)>0$. Following the approach used in \cite{SW}, we noted that under some conditions on the map $u \mapsto t(u)$ the set $\mathcal{N}$ is homeomorphic to $\mathcal{S}$, the unit sphere in $X$, and there is a one-to-one correspondence between critical points of $\Phi$ and critical points of the map $u \mapsto \Phi(t(u)u)$, $u \in \mathcal{S}$. Assuming $\Phi$ to be even and satisfy some standard compactness conditions one may then apply the Ljusternik-Schnirelman theory  to obtain a sequence of critical values of $\Phi$, the first term of this sequence being its ground state level,  cf. \cite[Theorem 2.1]{LRQS}.

We shall consider now the case where the fibering map associated to $\Phi$ has two critical points. More precisely we assume the following condition:
 \begin{itemize}
 	\item[(H1)] For every $u \in X \setminus \{0\}$ the map $t \mapsto \Phi(tu)$, defined for $t>0$, has exactly two critical points $t^+(u)<t^-(u)$, with $t^+(u)$ being a local minimum point and $t^-(u)$ a local maximum point . Moreover:
	\begin{enumerate}
	\item the map $u \mapsto t^+(u)$ is bounded (away from zero and from above) in any compact subset of $\mathcal{S}$.
	\item the map $u \mapsto t^-(u)$ is bounded away from zero in $\mathcal{S}$, and bounded from above in any compact subset of $\mathcal{S}$. \\	
\end{enumerate}
	 
 \end{itemize}

Figure 1 below shows some examples of fibering maps satisfying this condition.
From (H1) it follows that $$\mathcal{N}=\mathcal{N}^+ \cup \mathcal{N}^-, \quad \mbox{where} \quad \mathcal{N}^{\pm}=\mathcal{N}^{\pm}(\Phi):=\{t^{\pm}(u)u: u \in \mathcal{S} \}.$$
Note that $\mathcal{N}^+$ and $\mathcal{N}^-$ are not necessarily manifolds since these sets may intersect the degenerate part of $\mathcal{N}$, namely the set
$$\mathcal{N}^0=\mathcal{N}^0(\Phi):=\left\{tu \in X \setminus \{0\}: \frac{d}{dt} \Phi(tu)=\frac{d^2}{dt^2} \Phi(tu)=0\right\}$$
in case the map $t \mapsto \Phi(tu)$ is twice differentiable for some $u \in X \setminus \{0\}$.
Thus our definition of $\mathcal{N}^+$ and $\mathcal{N}^-$ is different from the one usually adopted in the litterature, which excludes points in $\mathcal{N}^0$. Such splitting of the Nehari set
has been used for several functionals associated to nonlinear boundary value problems (its first occurences being \cite{Ta} to the best of our knowledge).  Besides \cite{BWu}, which proves the existence of minimisers of $\Phi$ constrained to $\mathcal{N}^+ \setminus \mathcal{N}^0$ and $\mathcal{N}^-\setminus \mathcal{N}^0$ for a particular class of functionals, we are not aware of a general abstract setting of this splitting, even for $\Phi \in C^1(X)$. 

 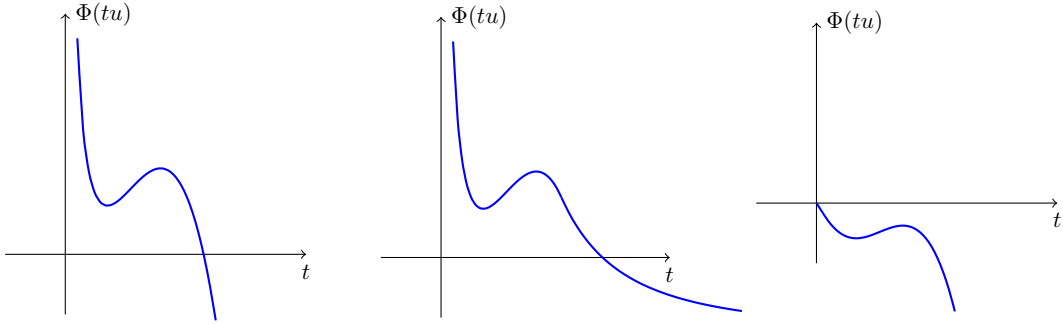
\begin{figure}[h]
	\centering
	\begin{minipage}{0.2\textwidth} 
		\begin{tikzpicture}[scale=0.8]
		\draw[->] (-1,0) -- (4,0) node[below] {\scalebox{0.8}{$t$}};
		
		\draw[->] (0,-1) -- (0,4) node[right] {\scalebox{0.8}{$\Phi(tu)$}};
		
		\draw[blue,thick,domain=0.2:2.5,smooth,variable=\x] plot ({\x},{(\x)^(-1)+2.5*(\x)^(2)-(\x)^(3)-1.5});	
		
		\end{tikzpicture}

	\end{minipage}
	\hspace{.1\linewidth}
	\begin{minipage}{0.2\textwidth} 
		\begin{tikzpicture}[scale=0.8]
		\draw[->] (-1,0) -- (3.8,0) node[below] {\scalebox{0.8}{$t$}};
		
		\draw[->] (0,-1) -- (0,4) node[right] {\scalebox{0.8}{$\Phi(tu)$}};
		
		\draw[blue,thick,domain=0.2:2,smooth,variable=\x] plot ({\x},{(\x)^(-1)+2.5*(\x)^(2)-(\x)^(3)-1.5});	
		
		\draw[blue,thick,domain=2:5,smooth,variable=\x] plot ({\x},{9*(\x)^(-2)-1.25});	
		
		\end{tikzpicture}

	\end{minipage}
	\hspace{.1\linewidth}
	\begin{minipage}{0.2\linewidth} 
		\begin{tikzpicture}[scale=0.8]
		\draw[->] (-1,0) -- (4,0) node[below] {\scalebox{0.8}{$t$}};
		
		\draw[->] (0,-1) -- (0,3) node[right] {\scalebox{0.8}{$\Phi(tu)$}};
		
		\draw[blue,thick,domain=0:2.3,smooth,variable=\x] plot ({\x},{-3*(\x)^(1.2)+3.5*(\x)^(2)-(\x)^(3)});	
		
		\end{tikzpicture} 
	\end{minipage}
	\caption{Some possible behaviors of the map $t \mapsto \Phi(tu)$ under (H1).} \label{fig2}
\end{figure}

We shall see that (H1) implies that $\mathcal{N}^+$  and $\mathcal{N}^-$ are both homeomorphic to $S$, so that the Ljusternik-Schnirelman theory applies to $\Phi|_{\mathcal{N}^\pm}$ under the following condition:\\

 \begin{itemize}
	\item[$(H2)^{\pm}$]  $\Phi$ is even, bounded from below on $\mathcal{N}^{\pm}$, and satisfies the Palais-Smale condition on $\mathcal{N}^{\pm}$, i.e.
 	any sequence $(u_n) \subset \mathcal{N}^{\pm}$ such that $(\Phi(u_n))$ is bounded and $\Phi'(u_n) \to 0$ has a convergent subsequence in $\mathcal{N}^{\pm}$.\\
 \end{itemize}

Indeed, it provides us with two sequences of critical values of $\Phi$, namely $(\lambda_n^+)$ and $(\lambda_n^-)$, which are given by $$\lambda_n^{\pm}:=\displaystyle
\inf_{F\in \mathcal{F}_n}\sup_{u\in F}\Phi(t^{\pm}(u)u),$$ where $
\mathcal{F}_n:=\{F\subset \mathcal{S}: F \mbox{ is compact, symmetric, and } \gamma(F)\ge n\}$, and $\gamma(F)$ is the Krasnoselskii genus of $F$. 
Our main abstract result reads as follows:

\begin{theorem}\label{tn}
Assume (H1) and $(H2)^{\pm}$. Then $(\lambda_n^{\pm})$ is a nondecreasing unbounded sequence of critical values of $\Phi$. Moreover $\lambda_1^+$ is the ground state level of $\Phi$, i.e. its least critical level, $\lambda_1^+<\lambda_1^-$, and $\lambda_n^+ \le \lambda_n^-$ for any $n \ge 2$.
\end{theorem}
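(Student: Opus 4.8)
The plan is to deduce Theorem~\ref{tn} from a Ljusternik--Schnirelman argument applied separately to the two fibered functionals $\Psi^{\pm}:=\Phi\circ m^{\pm}$ on the unit sphere $\mathcal{S}$, where $m^{\pm}:\mathcal{S}\to\mathcal{N}^{\pm}$ denotes the bijection $m^{\pm}(u)=t^{\pm}(u)u$. Before the comparison statements can even be made one needs the reduction machinery of \cite{LRQS,SW}, which I would set up first: that (H1) forces $u\mapsto t^{\pm}(u)$ to be continuous on $\mathcal{S}$ --- here the strict local min/max character of $t^{+}(u)<t^{-}(u)$ together with the boundedness in (1)--(2) of (H1) is exactly what prevents $t^{\pm}$ from jumping --- so that $m^{\pm}$ is a homeomorphism onto $\mathcal{N}^{\pm}$; that $\Psi^{\pm}\in C^{1}(\mathcal{S})$ with $\frac{d}{du}\Psi^{\pm}(u)[h]=t^{\pm}(u)\,\Phi'(t^{\pm}(u)u)[h]$ (the term carrying a derivative of $t^{\pm}$ drops out because $\frac{d}{dt}\Phi(tu)$ vanishes at $t=t^{\pm}(u)$, so only continuity of $t^{\pm}$ is needed); that $u\in\mathcal{S}$ is a critical point of $\Psi^{\pm}$ if and only if $m^{\pm}(u)$ is a critical point of $\Phi$; and that $(H2)^{\pm}$ transfers the Palais--Smale condition from $\Phi$ on $\mathcal{N}^{\pm}$ to $\Psi^{\pm}$ on $\mathcal{S}$. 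I expect this reduction to be the genuine obstacle, precisely because $\mathcal{N}^{\pm}$ need not be a $C^{1}$-manifold (it may meet $\mathcal{N}^{0}$); the way around it is to never differentiate on $\mathcal{N}^{\pm}$ but only on the honest manifold $\mathcal{S}$, the $C^{1}$-regularity of $\Psi^{\pm}$ then following as in \cite{SW} from continuity of $t^{\pm}$ alone.

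Granting this, the proof would run as follows. Since $\Phi$ is even, $\frac{d}{dt}\Phi(t(-u))=\frac{d}{dt}\Phi(tu)$ for $t>0$, hence $t^{\pm}(-u)=t^{\pm}(u)$ and $\Psi^{\pm}$ is even; by $(H2)^{\pm}$ and the homeomorphism, $\inf_{\mathcal{S}}\Psi^{\pm}=\inf_{\mathcal{N}^{\pm}}\Phi>-\infty$ and $\Psi^{\pm}$ satisfies (PS). As $X$ is infinite-dimensional, $\mathcal{S}$ contains symmetric compact sets of arbitrarily large genus, so $\gamma(\mathcal{S})=\infty$ and $\mathcal{F}_{n}\neq\emptyset$ for all $n$. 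The Ljusternik--Schnirelman theorem for even $C^{1}$ functionals on symmetric manifolds then gives that $\lambda_{n}^{\pm}=\inf_{F\in\mathcal{F}_{n}}\sup_{u\in F}\Psi^{\pm}(u)$ is a sequence of critical values of $\Psi^{\pm}$; it is nondecreasing because $\mathcal{F}_{n+1}\subset\mathcal{F}_{n}$, and unbounded because (PS) makes $\{u\in\mathcal{S}:\Psi^{\pm}(u)\le b,\ (\Psi^{\pm})'(u)=0\}$ compact for every $b$, hence of finite genus, so the usual deformation argument rules out a finite accumulation point of $(\lambda_{n}^{\pm})$. By the critical-point correspondence each $\lambda_{n}^{\pm}$ is then a critical value of $\Phi$.

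The remaining task would be to identify $\lambda_{1}^{+}$ and to compare the two sequences. For each $u\in\mathcal{S}$ the $C^{1}$ function $f_{u}(t):=\Phi(tu)$ has no critical point in $(t^{+}(u),t^{-}(u))$, so $f_{u}'$ has constant sign there, necessarily positive since $t^{+}(u)$ is a local minimum; thus $f_{u}$ is strictly increasing on $[t^{+}(u),t^{-}(u)]$ and $\Psi^{+}(u)=\Phi(t^{+}(u)u)<\Phi(t^{-}(u)u)=\Psi^{-}(u)$. Taking $\sup_{F}$ and then $\inf_{\mathcal{F}_{n}}$ yields $\lambda_{n}^{+}\le\lambda_{n}^{-}$ for every $n$. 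For the ground state, $\lambda_{1}^{+}=\inf_{u\in\mathcal{S}}\Psi^{+}(u)=\inf_{\mathcal{N}^{+}}\Phi$ (since $\{u,-u\}\in\mathcal{F}_{1}$ and $\Psi^{+}$ is even), and this infimum is attained via Ekeland's variational principle together with (PS), producing a critical point of $\Phi$ at level $\lambda_{1}^{+}$. If $c$ is any critical value of $\Phi$, attained at some $v\neq 0$, then $\Phi'(v)v=0$ gives $v\in\mathcal{N}=\mathcal{N}^{+}\cup\mathcal{N}^{-}$; if $v\in\mathcal{N}^{+}$ then $c\ge\inf_{\mathcal{N}^{+}}\Phi=\lambda_{1}^{+}$, while if $v=t^{-}(w)w\in\mathcal{N}^{-}$ then $c=f_{w}(t^{-}(w))>f_{w}(t^{+}(w))\ge\inf_{\mathcal{N}^{+}}\Phi=\lambda_{1}^{+}$; hence $\lambda_{1}^{+}$ is the least critical level of $\Phi$. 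Finally $\lambda_{1}^{-}=\inf_{\mathcal{N}^{-}}\Phi$ is attained in the same way at some $t^{-}(w_{0})w_{0}$, and then $\lambda_{1}^{-}=f_{w_{0}}(t^{-}(w_{0}))>f_{w_{0}}(t^{+}(w_{0}))\ge\lambda_{1}^{+}$, which gives the strict inequality $\lambda_{1}^{+}<\lambda_{1}^{-}$ and completes the proof.
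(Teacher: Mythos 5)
Your proposal is correct and follows essentially the same route as the paper: continuity of $u\mapsto t^{\pm}(u)$ under (H1), the homeomorphisms $m^{\pm}:\mathcal{S}\to\mathcal{N}^{\pm}$, the $C^1$ reduction $\Psi^{\pm}=\Phi\circ m^{\pm}$ via \cite[Proposition 9]{SW} (valid for local extremizers), Ljusternik--Schnirelman theory with the genus/deformation argument for unboundedness, and the fiberwise inequality $\Phi(t^{+}(u)u)<\Phi(t^{-}(u)u)$ to compare the two sequences and get strictness at $n=1$ from attainment of the minima. The only difference is cosmetic: you spell out the identification of $\lambda_1^{+}$ as the least critical level via $\mathcal{N}=\mathcal{N}^{+}\cup\mathcal{N}^{-}$, which the paper leaves implicit.
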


\begin{remark} In $(H2)^{\pm}$ it is enough to assume that the Palais-Smale condition holds at the level $\lambda_n^{\pm}$ to deduce that $\lambda_n^{\pm}$ is a critical value of $\Phi$. On the other hand, the global Palais-Smale condition allows us to deduce that $\lambda_n^{\pm} \to \infty$.
\end{remark}







Next we consider two applications of Theorem \ref{tn}:
\subsection{The prescribed energy problem}
As already explained in \cite{LRQS}, our motivation for dealing with functionals in $C^1(X \setminus \{0\})$ comes from several problems, the first one being
 the {\it prescribed energy problem} for the family of functionals  $\Phi_\lambda:=I_1-\lambda I_2$, where $\lambda$ is a real parameter and $I_1,I_2\in C^1(X)$. We consider the problem
\begin{equation}\label{ef}
\Phi_\lambda'(u)=0, \quad \Phi_{\lambda}(u)=c,
\end{equation}
where $c$ is an arbitrary real number.
 By a (nontrivial) solution of \eqref{ef} we mean a couple $(\lambda,u) \in \mathbb{R} \times X \setminus \{0\}$ satisfying \eqref{ef}, in which case $u$ is a critical point of $\Phi_\lambda$ at the level $c$.  In this sense, a solution of \eqref{ef} yields a critical point of  $\Phi_\lambda$ with prescribed energy. 
As discussed in \cite{LRQS}(see also \cite{I3,RSiS}), one can solve \eqref{ef} by dealing with the functional $\lambda_c \in C^1(X \setminus \{0\})$ given by 
\begin{equation}\label{dlc}
\lambda_c(u):=\frac{I_1(u)-c}{I_2(u)}, \quad u \in X \setminus \{0\},
\end{equation}
where we assume that $I_2(u) \neq 0$ for every $u \in X \setminus \{0\}$. In addition we assume that $I_2$ is $\alpha$-homogeneous for some $\alpha>0$, i.e. $I_2(tu)=t^{\alpha} I_2(u)$ for any $t>0$ and $u \in X$.

Our main model for this situation is the functional \begin{equation}\label{fm}
\Phi_\lambda(u)=\frac{1}{2}\int_{\Omega} |\nabla u|^2-\frac{\lambda}{2}\int_{\Omega} |u|^q-\int_{\Omega} F(u), \quad u \in H_0^1(\Omega),
\end{equation} 
which is associated to the boundary value problem
\begin{equation}\label{semi}
\begin{cases}
-\Delta u=\lambda |u|^{q-2}u +f(u) &\mbox{ in } \Omega,\\
u=0 &\mbox{ on } \partial \Omega.
\end{cases}
\end{equation}
Here $\Omega \subset \mathbb{R}^N$ ($N \geq 1$) is a bounded domain, $1<q<2$, $F(s):=\int_0^s f(t) dt$, and $f\in C(\mathbb{R})$ has superlinear and subcritical Sobolev growth. In this case the functional $\lambda_c$ is given by
 \[
\lambda_c(u)=\frac{\frac{1}{2}\int_\Omega |\nabla u|^2-\int_\Omega F(u)-c}{\frac{1}{q}\int_\Omega |u|^q}, \quad u \in H_0^1(\Omega) \setminus \{0\}.
\]
We shall see that this functional satisfies (H1), (H2), and (H3) if $c$ is negative and close enough to $0$, which produces two unbounded sequences of critical values of $\lambda_c$.

Let $\lambda_c$ be given by \eqref{dlc}. We know that (cf. \cite[Subsection 2.1]{LRQS})
\[
\mathcal{N}(\lambda_c)=\{u \in X  \setminus \{0\}: H(u)+ \alpha c=0\},
\]
where
$$H(u):=I_1'(u)u-\alpha I_1(u), \quad u \in X.$$

The behavior of the map $t \mapsto H(tu)$ is then crucial to understand $\mathcal{N}(\lambda_c)$. The following condition on $H$ implies that the  map  $t \mapsto \lambda_c(tu)$ has two critical points if $c<0$ is close enough to zero:

\begin{itemize}
	\item[(F1)] For any $u \in X \setminus \{0\}$ there exists $s(u) > 0$ such that the map $t \mapsto H(tu)$ is increasing in $(0,s(u))$ and decreasing in $(s(u),\infty)$. Moreover $\displaystyle \lim_{t \to \infty} H(tu)=-\infty$ uniformly on weakly compact subsets of $X \setminus \{0\}$.\\ 
\end{itemize}

Indeed, since $H(0)=0$ it follows that $s(u)$ is the global maximum point of the map $t \mapsto H(tu)$ for any $u \in X  \setminus \{0\}$, so that the equation $H(tu)+ \alpha c=0$ has two solutions $t^+(u)<t^-(u)$ if $H(s(u)u)+\alpha c>0$. Note also that, under some additional conditions,  
$H$ has a ground state level 	\begin{equation}
\label{cal}
h_0:=\inf_{u \in X \setminus \{0\}}\max_{t>0} H(tu)=\inf_{u\in \mathcal{S}}H(s(u)u)=\inf_{\mathcal{N}(H)} H.
\end{equation} This is the case, for instance, if $$H(u)=\frac{2-q}{2}\int_\Omega |\nabla u|^2+\int_\Omega \left(qF(u)-f(u)u\right), \quad  u \in H_0^1(\Omega),$$
which is associated to \eqref{semi}, cf. \cite[Proof of Corollary 4.5]{LRQS}.

The next condition will be used to show that $\lambda_c$ satisfies the Palais-Smale condition on $\mathcal{N}(\lambda_c)$:

\begin{itemize}
	\item[(F2)] $I_1=J-K$, where $J,K \in C^1(X)$ are such that $K'$ is completely continuous, and there exist $C_1,C_2>0$, $\beta \geq\alpha$, and $\eta>1$ such that $J(u)\geq C_1\|u\|^{\beta}$ and $\left(J'(u)-J'(v)\right)(u-v)\geq C_2(\|u\|^{\eta-1}-\|v\|^{\eta-1})(\|u\|-\|v\|)$ for any $u,v \in X$. 
\end{itemize}

We are now in position to apply Theorem \ref{tn} to $\lambda_c$, which yields the following result:

\begin{theorem}\label{c1}
	Let  $I_1,I_2 \in C^1(X)$ be such that $I_1(0)=I_2(0)=0$, $I_2(u)>0$ for $u \in  X \setminus \{0\}$,  $I_2$ is $\alpha$-homogeneous for some $\alpha>1$, and $I_2'$ is completely continuous. Assume in addition that $I_1$ satisfies $(F1)$, $(F2)$,  and 
	$0<-\alpha  c<h_0$, where $h_0$ is given by \eqref{cal}.
	Then:
	\begin{enumerate}
		\item $\lambda_c$  has a ground state level, given by $\lambda_{1,c}^+=\displaystyle \min_{ \mathcal{N}^+(\lambda_c) } \lambda_c$. 
		\item If, in addition, $I_1,I_2$ are even then $\lambda_c$ has two nondecreasing unbounded sequences of critical values $(\lambda_{n,c}^+)$ and $(\lambda_{n,c}^-)$, given by
		$\displaystyle \lambda_{n,c}^{\pm}:=\displaystyle
		\inf_{F\in \mathcal{F}_n}\sup_{u\in F}\lambda_c(t_c^{\pm}(u)u)$.
		 Moreover $\lambda_{1,c}^+ < \lambda_{1,c}^-$ and $\lambda_{n,c}^+ \leq \lambda_{n,c}^-$ for every $n \ge 2$.
	\end{enumerate}
\end{theorem}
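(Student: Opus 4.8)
The plan is to verify that $\Phi:=\lambda_c$ satisfies (H1) and $(H2)^\pm$, and then to invoke Theorem \ref{tn}: the two sequences $(\lambda_{n,c}^\pm)$ with their min--max characterisation, the identification of $\lambda_{1,c}^+$ as the ground state level, and the inequalities $\lambda_{1,c}^+<\lambda_{1,c}^-$ and $\lambda_{n,c}^+\le\lambda_{n,c}^-$ will then be exactly what that theorem delivers. For item (1) I would only use the part of (the proof of) Theorem \ref{tn} producing $\lambda_1^+=\min_{\mathcal{N}^+}\Phi$, which needs just (H1) together with boundedness from below and the Palais--Smale condition on $\mathcal{N}^+$, and hence no evenness of $I_1,I_2$.

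For (H1) I would start from the elementary identity
\[
\frac{d}{dt}\lambda_c(tu)=\frac{H(tu)+\alpha c}{t^{\alpha+1}I_2(u)},\qquad t>0,\ u\in X\setminus\{0\},
\]
obtained by differentiating \eqref{dlc} and using the $\alpha$-homogeneity of $I_2$ together with the definition of $H$; thus the positive critical points of $t\mapsto\lambda_c(tu)$ are exactly the solutions of $H(tu)=-\alpha c$. Since $I_1(0)=0$ gives $H(0)=0$, and since (F1) makes $t\mapsto H(tu)$ increase on $(0,s(u))$, decrease on $(s(u),\infty)$ and go to $-\infty$, while $H(s(u)u)\ge h_0>-\alpha c>0=H(0)$, the level $-\alpha c$ is crossed exactly twice, say at $t^+_c(u)<s(u)<t^-_c(u)$; reading the sign of $H(tu)+\alpha c$ on the three resulting intervals then shows $\lambda_c(\cdot\,u)$ is decreasing, then increasing, then decreasing, so $t^+_c(u)$ is a local minimum point and $t^-_c(u)$ a local maximum point. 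For the boundedness requirements I would note that $t^{\pm}_c(u_n)\to0$ along a sequence in $\mathcal{S}$ would force $t^{\pm}_c(u_n)u_n\to0$, hence $H(t^{\pm}_c(u_n)u_n)\to H(0)=0\ne-\alpha c$ by continuity of $H$, a contradiction; so both maps are bounded away from $0$ on $\mathcal{S}$. And if $K\subset\mathcal{S}$ is norm-compact (hence weakly compact), the uniform limit in (F1) provides $T>0$ with $H(Tu)<-\alpha c$ for all $u\in K$, whence $t^-_c(u)<T$ and a fortiori $t^+_c(u)<T$ on $K$. This gives (H1), and in particular $\mathcal{N}^\pm(\lambda_c)\cong\mathcal{S}$.

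To verify $(H2)^\pm$, evenness of $\lambda_c$ is immediate from that of $I_1,I_2$. On $\mathcal{N}(\lambda_c)=\{H=-\alpha c\}$ the constraint $I_1'(u)u-\alpha I_1(u)=-\alpha c$ gives $I_1(u)-c=\tfrac{1}{\alpha}I_1'(u)u$, so
\[
\lambda_c(u)=\frac{I_1'(u)u}{\alpha I_2(u)}=\frac{J'(u)u-K'(u)u}{\alpha I_2(u)},\qquad u\in\mathcal{N}(\lambda_c),
\]
and I would combine this with $J(u)\ge C_1\|u\|^\beta$ ($\beta\ge\alpha$), the bound $0<I_2(u)\le C\|u\|^\alpha$ forced by homogeneity and continuity, the coercive lower bound for $J'(u)u$ coming from the monotonicity inequality in (F2) with $v=0$, and the complete continuity of $K'$, to conclude $\inf_{\mathcal{N}^\pm}\lambda_c>-\infty$. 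For the Palais--Smale condition, given $(u_n)\subset\mathcal{N}^\pm$ with $\lambda_c(u_n)$ bounded and $\lambda_c'(u_n)\to0$ in $X^*$, I would first prove $(u_n)$ bounded (via the $\alpha$-homogeneity of $I_2$, the identity above and the coercivity of $J$), so that up to a subsequence $u_n\rightharpoonup u$; complete continuity of $I_2'$ and $K'$ then gives $I_2(u_n)\to I_2(u)$, $K(u_n)\to K(u)$ and $I_2'(u_n)\to I_2'(u)$, $K'(u_n)\to K'(u)$ in $X^*$. Testing the quotient-rule identity $\lambda_c'(u_n)I_2(u_n)^2=I_1'(u_n)I_2(u_n)-(I_1(u_n)-c)I_2'(u_n)$ against $u_n-u$, and using $I_2(u_n)\to I_2(u)>0$ (positive because $\mathcal{N}^\pm$ is bounded away from $0$, so $u\ne0$), yields $I_1'(u_n)(u_n-u)\to0$ and hence $J'(u_n)(u_n-u)\to0$. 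The uniform monotonicity in (F2),
\[
\bigl(J'(u_n)-J'(u)\bigr)(u_n-u)\ \ge\ C_2\bigl(\|u_n\|^{\eta-1}-\|u\|^{\eta-1}\bigr)\bigl(\|u_n\|-\|u\|\bigr)\ \ge\ 0,
\]
whose left-hand side tends to $0$, then forces $\|u_n\|\to\|u\|$; since $X$ is uniformly convex and $u_n\rightharpoonup u$, this gives $u_n\to u$ in $X$. Finally $u\in\mathcal{N}(\lambda_c)$ by closedness, $u\ne0$, and $u$ lies in $\mathcal{N}^+$ (resp. $\mathcal{N}^-$) according to the branch of $(u_n)$ via the homeomorphism $\mathcal{S}\cong\mathcal{N}^\pm$; this is $(H2)^\pm$.

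With (H1) and $(H2)^\pm$ in hand, Theorem \ref{tn} applied to $\lambda_c$ yields item (2) verbatim, while the $\lambda_1^+$ part of it yields item (1) with no evenness hypothesis. I expect the Palais--Smale verification to be the main obstacle, and within it the two delicate points to be the a priori boundedness of Palais--Smale sequences on $\mathcal{N}^\pm$ and the weak-to-strong passage through the $(S_+)$-type inequality in (F2); the remaining arguments are routine consequences of (F1), (F2) and the properties of $H$ and $h_0$ already set up before the statement.
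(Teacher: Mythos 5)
Your high-level route coincides with the paper's: verify (H1) and $(H2)^\pm$ for $\lambda_c$ and invoke Theorem \ref{tn} (using only its minimization part for item (1), which indeed requires no evenness). Your (H1) verification is in substance the paper's Proposition \ref{p2} and is correct; the only imprecision is that you need the uniform limit in (F1) to give $H(tu)<-\alpha c$ for \emph{all} $t\ge T$ and all $u$ in the compact set, not merely at $t=T$, in order to conclude $t_c^-(u)<T$ (the single-point inequality alone is compatible with $T<t_c^+(u)$).

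The genuine gap is in $(H2)^\pm$, at exactly the two places you flag as delicate. (i) Boundedness from below of $\lambda_c$ on $\mathcal{N}^\pm$ and boundedness of constrained Palais--Smale sequences cannot be extracted from the identity $\lambda_c(u)=\frac{I_1'(u)u}{\alpha I_2(u)}$ together with $J(u)\ge C_1\|u\|^{\beta}$, $I_2(u)\le C\|u\|^{\alpha}$ and the monotonicity inequality in (F2): hypothesis (F2) gives no upper growth control on $K$ or on $K'(u)u$ as $\|u\|\to\infty$ (complete continuity of $K'$ only bounds it on bounded sets), and no relation between $\eta$ and $\alpha$ is assumed, so the numerator/denominator comparison you sketch is not available. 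The paper's Proposition \ref{p3} obtains both facts from a mechanism you never use: if $(u_n)\subset\mathcal{N}(\lambda_c)$ and $\|u_n\|\to\infty$, the constraint $H(u_n)=-\alpha c$ combined with the uniform divergence clause of (F1) forces $v_n:=u_n/\|u_n\|\rightharpoonup 0$; then, since $c<0$, $\lambda_c\ge\lambda_0:=I_1/I_2$, and the monotonicity of $t\mapsto\lambda_0(tv)$ on $(0,t_c^+(v))$ together with $t_c^+(v_n)\ge\delta>0$ yields $\lambda_c(u_n)\ge\lambda_0(\delta v_n)\ge \big(C_1\delta^{\beta-\alpha}-K(\delta v_n)\delta^{-\alpha}\big)/I_2(v_n)\to+\infty$, because $K(\delta v_n)\to0$ and $I_2(v_n)\to0^{+}$; this contradicts the boundedness of the levels and simultaneously gives coercivity. (ii) In the weak-to-strong step, your parenthetical claim that the weak limit satisfies $u\ne0$ ``because $\mathcal{N}^\pm$ is bounded away from $0$'' is a non sequitur: a sequence with norms bounded away from zero can converge weakly to $0$. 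The correct argument (first case of Proposition \ref{p3}) is that $u_n\rightharpoonup 0$ along $\mathcal{N}(\lambda_c)$ would give $K(u_n)\to0$, $J(u_n)\ge C_1 C^{\beta}>0$ and $I_2(u_n)\to0$, hence, using $c<0$, $\lambda_c(u_n)\to+\infty$, contradicting the boundedness of the Palais--Smale levels. Once these two points are supplied, your remaining steps (testing the quotient-rule identity against $u_n-u$, the (F2) monotonicity plus uniform convexity, and the identification of the limit's branch via continuity of $t_c^{\pm}$) do match the paper's argument, which at that stage refers to \cite[Lemma 3.5]{LRQS}.
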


Let us restate Theorem \ref{c1} in terms of the functional $\Phi_\lambda$:
\begin{corollary}\label{cc1}
Under the assumptions of Theorem \ref{c1}, the following assertions hold:
\begin{enumerate}
	\item $\Phi_\lambda$ has a critical point $u_{1,c}$ at the level $c$ for $\lambda=\lambda_{1,c}^+$, and has no such critical point for $\lambda<\lambda_{1,c}^+$.
	\item If, in addition, $I_1,I_2$ are even, then for every $n\in \mathbb{N}$ the functional $\Phi_\lambda$ has, at the level $c$, a pair of critical points $\pm u_{n,c}$  for $\lambda=\lambda_{n,c}^+$, and a pair of critical points $\pm v_{n,c}$ for $\lambda=\lambda_{n,c}^-$.
	 Moreover $\lambda_{1,c}^+ < \lambda_{1,c}^-$, $\lambda_{n,c}^+ \leq \lambda_{n,c}^-$ for every $n \ge 2$, and $\lambda_{n,c}^{\pm} \nearrow \infty$ as $n \to \infty$.
\end{enumerate}
\end{corollary}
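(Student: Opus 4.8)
The plan is to reduce everything to Theorem \ref{c1} via the equivalence, recalled above, between the prescribed energy problem \eqref{ef} and the critical point theory of $\lambda_c$. First I would record this equivalence precisely: a couple $(\lambda,u)\in\mathbb{R}\times(X\setminus\{0\})$ solves \eqref{ef} if and only if $u$ is a critical point of $\lambda_c$ with $\lambda_c(u)=\lambda$. Indeed, differentiating \eqref{dlc} shows that $\lambda_c'(u)=0$ is equivalent to $I_1'(u)=\lambda_c(u)I_2'(u)$, i.e. to $\Phi_{\lambda_c(u)}'(u)=0$; and from the definition of $\lambda_c$ one gets directly $\Phi_{\lambda_c(u)}(u)=I_1(u)-\lambda_c(u)I_2(u)=c$. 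Conversely, if $\Phi_\lambda'(u)=0$ and $\Phi_\lambda(u)=c$ then $\lambda_c(u)=\lambda$, and reversing these identities gives $\lambda_c'(u)=0$. With this in hand the corollary is a transcription of Theorem \ref{c1}.

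For item (1): Theorem \ref{c1}(1) provides a critical point $u_{1,c}$ of $\lambda_c$ attaining its ground state (i.e. least critical) level $\lambda_{1,c}^+=\min_{\mathcal{N}^+(\lambda_c)}\lambda_c$; by the equivalence $(\lambda_{1,c}^+,u_{1,c})$ solves \eqref{ef}, so $u_{1,c}$ is a critical point of $\Phi_{\lambda_{1,c}^+}$ at the level $c$. If, for some $\lambda<\lambda_{1,c}^+$, the functional $\Phi_\lambda$ had a critical point $u$ at the level $c$, then $(\lambda,u)$ would solve \eqref{ef}, hence $u$ would be a critical point of $\lambda_c$ with $\lambda_c(u)=\lambda<\lambda_{1,c}^+$, contradicting that $\lambda_{1,c}^+$ is the least critical level of $\lambda_c$.

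For item (2): when $I_1,I_2$ are even so is $\lambda_c$, and Theorem \ref{c1}(2) supplies the two nondecreasing unbounded sequences $(\lambda_{n,c}^+)$ and $(\lambda_{n,c}^-)$ of critical values of $\lambda_c$, together with $\lambda_{1,c}^+<\lambda_{1,c}^-$ and $\lambda_{n,c}^+\le\lambda_{n,c}^-$ for $n\ge 2$. For each $n$, pick a critical point $u_{n,c}$ (resp.\ $v_{n,c}$) of $\lambda_c$ with $\lambda_c(u_{n,c})=\lambda_{n,c}^+$ (resp.\ $\lambda_c(v_{n,c})=\lambda_{n,c}^-$); evenness of $\lambda_c$ makes $-u_{n,c}$ and $-v_{n,c}$ critical points as well, and applying the equivalence to each of these points yields that $\pm u_{n,c}$ are critical points of $\Phi_{\lambda_{n,c}^+}$ at the level $c$ and $\pm v_{n,c}$ are critical points of $\Phi_{\lambda_{n,c}^-}$ at the level $c$. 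Finally, $\lambda_{n,c}^\pm\nearrow\infty$ is exactly the monotonicity-and-unboundedness part of Theorem \ref{c1}(2).

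Since the substantive work — checking (H1) and $(H2)^\pm$ for $\lambda_c$ under (F1), (F2) and $0<-\alpha c<h_0$ — is already absorbed into Theorem \ref{c1}, I do not expect a genuine obstacle at the level of the corollary; the only point to be careful about is that the correspondence between \eqref{ef} and the critical points of $\lambda_c$ runs in both directions, which is what makes the nonexistence claim in (1) for $\lambda<\lambda_{1,c}^+$ meaningful rather than vacuous.
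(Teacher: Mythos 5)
Your proposal is correct and matches the paper's (implicit) argument: the paper presents Corollary \ref{cc1} as a direct restatement of Theorem \ref{c1}, relying on exactly the two-way correspondence $\lambda_c'(u)=\Phi_{\lambda_c(u)}'(u)/I_2(u)$ and $\Phi_{\lambda_c(u)}(u)=c$ that you spell out, including the converse direction needed for the nonexistence claim in item (1). No gaps.
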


In view of their generality, Theorem \ref{c1} and Corollary \ref{cc1} have a wide range of applications to boundary value problems. We shall deal with two of them in Subsection 2.4.

\begin{remark}\strut
\begin{enumerate}
\item If $X$ is a function space (which is the case in our applications) and $I_1,I_2$ are even, then one can choose $u_{1,c}$ and $v_{1,c}$ to be nonnegative in Corollary \ref{cc1}.
\item We are not able to establish that $\lambda_{n,c}^+ < \lambda_{n,c}^-$ under the conditions of Theorem \ref{c1}. However, some extra assumptions lead to this inequality, see Subsection 2.3 below. These assumptions entail that $\mathcal{N}^+(\lambda_c)$ is bounded, which is the main ingredient in the proof of $\lambda_{n,c}^+ < \lambda_{n,c}^-$. The boundedness of $\mathcal{N}^+(\lambda_c)$ also implies some properties of the sequence $(u_{n,c})$, see e.g. Corollary \ref{ap1} below.
\end{enumerate}	

\end{remark}

\subsection{The concave-convex problem for the affine $p$-Laplacian}

Our second motivation is related to the problem
\begin{equation} \label{P}
\left\{
\begin{array}{rlllr}
\Delta^{\cal A}_p u &=& \lambda \vert u \vert^{q-2} u +  \vert u\vert^{r-2}u & {\rm in} & \Omega, \\
u&=&0 & {\rm on} & \partial \Omega,
\end{array}\right.
\end{equation}
where $\lambda$ is a positive parameter and $1<q<p<r<p^*$. Here $\Delta^{\cal A}_p$ is the so-called {\it affine} $p$-Laplace operator (we refer to \cite{HJM} for a basic discussion on this operator).
Weak solutions of \eqref{P} are critical points of the functional $\Phi_{\cal A}^\lambda: W^{1,p}_0(\Omega)\setminus \{0\} \rightarrow \mathbb{R}$ given by
$$\Phi_{\cal A}^\lambda(u) = \frac{1}{p} {\cal E}^p_{p,\Omega}(u) - \frac{\lambda}{q}\|u\|_q^{q} - \frac{1}{r}\|u\|_r^{r}, $$
where ${\mathcal{E}}^p_{p,\Omega}: W^{1,p}_0(\Omega)\setminus \{0\} \rightarrow \mathbb{R}$ is the {\it affine}  $p$-energy on $\Omega$, given by
\begin{equation} \label{dep}
{\mathcal{E}}_{p,\Omega}(u) = \gamma_{N,p} \left( \int_{\mathbb{S}^{N-1}} \| \nabla_\xi u\|_p^{-N}\, d\sigma(\xi)\right)^{-\frac{1}{N}}.
\end{equation}
Here $\gamma_{N,p} = \left( 2 \omega_{N+p-2} \right)^{-1} \left(N \omega_N \omega_{p-1}\right) \left(N \omega_N\right)^{p/N}$, $\nabla_\xi u(x)=\nabla u(x) \cdot \xi$ and $\omega_k$ is the volume of the unit Euclidean ball in $\mathbb{R}^k$.

By Theorem 1 of \cite{LM2} we know that $\Phi_{\cal A}^\lambda \in C(W^{1,p}_0(\Omega))$ and  $\Phi_{\cal A}^\lambda \in C^1(W_0^{1,p}(\Omega) \setminus \{0\})$. 
Theorem \ref{tn} applied to $\Phi_{\cal A}^\lambda$ has the following consequence:

\begin{theorem}\label{taf}
There exists $\Lambda_{\cal A}>0$ such that for any $0<\lambda<\Lambda_{\cal A}$ the problem \eqref{P} has two sequences of pairs of solutions $(\pm u_{\lambda,n}) $ and $(\pm v_{\lambda,n}) $. Moreover $u_{1,\lambda}$ is a ground state solution of \eqref{P} and we can choose $u_{\lambda,1},v_{\lambda,1}>0$. The following energy properties hold:

\begin{enumerate}

\item $\Phi_{\cal A}^\lambda(u_{\lambda,n})<0$ and $\Phi_{\cal A}^\lambda(u_{\lambda,n})\leq \Phi_{\cal A}^\lambda(v_{\lambda,n})$ for every $n$. Moreover $\Phi_{\cal A}^\lambda(u_{\lambda,n}) \to 0^-$ and $ \Phi_{\cal A}^\lambda(v_{\lambda,n}) \to \infty$ as $n\to \infty$.
\item $\Phi_{\cal A}^\lambda(u_{\lambda,1})< \Phi_{\cal A}^\lambda(v_{\lambda,1})$ and there exists $0<\bar{\lambda}<\Lambda_{\cal A}$ such that $\Phi_{\cal A}^\lambda(v_{\lambda,1})>0$ for $0<\lambda<\bar{\lambda}$, $\Phi_{\cal A}^{\bar{\lambda}}(v_{\bar{\lambda},1})=0$, and $\Phi_{\cal A}^\lambda(v_{\lambda,1})<0$ for $\bar{\lambda}<\lambda<\Lambda_{\cal A}$.
\item If $q>p\left(1-\frac{1}{N}\right)$ then $\Phi_{\cal A}^\lambda(u_{\lambda,n})< \Phi_{\cal A}^\lambda(v_{\lambda,n})$ for every $n$, $u_{\lambda,n} \to 0$ in $W^{1,p}_0(\Omega)$ as $n\to \infty$, and $\sup_{n \in \mathbb{N}} \|u_{\lambda,n}\| \to 0$ as $\lambda \to 0$.
\end{enumerate}
\end{theorem}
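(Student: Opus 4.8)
The plan is to deduce Theorem~\ref{taf} from Theorem~\ref{tn} applied to $\Phi:=\Phi_{\cal A}^\lambda$ on $X=W_0^{1,p}(\Omega)$ (with $\mathcal{S}$ the unit sphere for $\|\nabla\cdot\|_p$), and then to read off the solutions and the energy estimates.

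\textbf{Checking (H1).} Since $\mathcal{E}_{p,\Omega}$ is positively $1$-homogeneous, for $u\in\mathcal{S}$ one has $\varphi_u(t):=\Phi(tu)=\tfrac{t^p}{p}\mathcal{E}_{p,\Omega}^p(u)-\tfrac{\lambda t^q}{q}\|u\|_q^q-\tfrac{t^r}{r}\|u\|_r^r$, hence $\varphi_u'(t)=t^{q-1}\psi_u(t)$ with $\psi_u(t)=t^{p-q}\mathcal{E}_{p,\Omega}^p(u)-\lambda\|u\|_q^q-t^{r-q}\|u\|_r^r$. As $q<p<r$, $\psi_u$ is $<0$ at $0^+$, strictly increasing up to a unique maximum point $t_*(u)$, then strictly decreasing to $-\infty$; thus $\varphi_u$ has exactly two critical points $t^+(u)<t_*(u)<t^-(u)$ — a (nondegenerate, so $\mathcal{N}^0=\emptyset$ here) local minimiser and maximiser — precisely when $\max_{t>0}\psi_u>0$, i.e.\ (maximising $t\mapsto t^{p-q}A-t^{r-q}B$ explicitly) when $\lambda<C_{p,q,r}\,(\mathcal{E}_{p,\Omega}^p(u))^{\frac{r-q}{r-p}}(\|u\|_q^q)^{-1}(\|u\|_r^r)^{-\frac{p-q}{r-p}}$. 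The crucial point is that, by the affine Sobolev inequality (which on the bounded domain $\Omega$ yields $\|u\|_s\le C_s\,\mathcal{E}_{p,\Omega}(u)$ for all $1\le s\le p^*$, cf.\ \cite{HJM,LM2}), this lower bound on $\lambda$ is bounded below by a positive constant $\Lambda_{\cal A}$ \emph{uniform in} $u\in\mathcal{S}$ — the powers of $\mathcal{E}_{p,\Omega}(u)$ cancel exactly. So for $0<\lambda<\Lambda_{\cal A}$ each fibering map has two critical points; the clauses (H1)(1)--(2) then follow from continuity and strict positivity of $u\mapsto\mathcal{E}_{p,\Omega}(u),\|u\|_q,\|u\|_r$ on $\mathcal{S}$, together with $t^-(u)>t_*(u)\ge\tau_0>0$ on all of $\mathcal{S}$ (from $\|u\|_r^r\le C_r^r\mathcal{E}_{p,\Omega}^r(u)$ and the affine P\'olya--Szeg\H{o} inequality $\mathcal{E}_{p,\Omega}(u)\le\|\nabla u\|_p=1$).

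\textbf{Checking $(H2)^\pm$.} Evenness of $\Phi$ is clear. On $\mathcal{N}(\Phi)$ the Nehari identity $\mathcal{E}_{p,\Omega}^p(u)=\lambda\|u\|_q^q+\|u\|_r^r$ gives $\Phi(u)=\lambda(\tfrac1p-\tfrac1q)\|u\|_q^q+(\tfrac1p-\tfrac1r)\|u\|_r^r$, and H\"older on the bounded domain ($\|u\|_q^q\le C\|u\|_r^q$, $q<r$) shows $\Phi(u)\ge g(\|u\|_r)$ with $g$ bounded below and coercive; hence $\Phi$ is bounded below on $\mathcal{N}^\pm$, and a sequence there with bounded energy has $(\|u_n\|_q),(\|u_n\|_r),(\mathcal{E}_{p,\Omega}^p(u_n))$ bounded. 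For the Palais--Smale condition I would combine the complete continuity of $u\mapsto|u|^{s-2}u$ into $W^{-1,p'}(\Omega)$ for $s\in\{q,r\}$ (Rellich, $q,r<p^*$) with the structural properties of the affine energy from \cite{HJM,LM2} — its sequential weak lower semicontinuity and the convergence principle ``$u_n\rightharpoonup u$ and $\mathcal{E}_{p,\Omega}(u_n)\to\mathcal{E}_{p,\Omega}(u)$ $\Rightarrow$ $u_n\to u$ strongly'' — to upgrade a weakly convergent subsequence to a strong one; on $\mathcal{N}^+$, where $\Phi<0$, it is enough that Palais--Smale holds at each of the negative minimax levels $\lambda_n^+$ (cf.\ the Remark after Theorem~\ref{tn}). \emph{I expect this compactness step to be the main obstacle}, since the affine $p$-energy is neither convex nor coercive on $W_0^{1,p}(\Omega)$, so the usual $(S_+)$-monotonicity argument for $\Delta_p$ is unavailable and must be replaced by the affine Sobolev inequality and the weak-continuity of $\mathcal{E}_{p,\Omega}$.

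\textbf{Applying Theorem~\ref{tn} and proving (1).} With (H1) and $(H2)^\pm$ in hand, Theorem~\ref{tn} furnishes nondecreasing sequences $\lambda_n^\pm$ of critical values of $\Phi$ with $\lambda_1^+$ the ground state level, $\lambda_1^+<\lambda_1^-$ and $\lambda_n^+\le\lambda_n^-$; critical points of $\Phi$ are weak solutions of \eqref{P}, evenness produces the pairs $\pm u_{\lambda,n}\subset\mathcal{N}^+$ and $\pm v_{\lambda,n}\subset\mathcal{N}^-$, and $u_{\lambda,1}$ is a ground state solution. Since $\bigl|\nabla|w|\bigr|=|\nabla w|$ a.e.\ forces $\mathcal{E}_{p,\Omega}(|w|)=\mathcal{E}_{p,\Omega}(w)$ — hence $\Phi(|w|)=\Phi(w)$ and $|w|$ stays in the same $\mathcal{N}^\pm$ — one may take $u_{\lambda,1},v_{\lambda,1}\ge0$, and $>0$ by the strong maximum principle for the affine operator. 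Then (1) follows: $u_{\lambda,n}=t^+(v)v$ with $\varphi_v$ strictly decreasing on $(0,t^+(v))$ gives $\Phi(u_{\lambda,n})=\varphi_v(t^+(v))<\varphi_v(0)=0$, and $\Phi(u_{\lambda,n})=\lambda_n^+\le\lambda_n^-=\Phi(v_{\lambda,n})$; the unboundedness of $(\lambda_n^-)$ (Theorem~\ref{tn}, using that global Palais--Smale holds on $\mathcal{N}^-$) gives $\Phi(v_{\lambda,n})\to\infty$ — alternatively, along functions concentrating at an interior point $\mathcal{E}_{p,\Omega}$ is scale-invariant while $\|\cdot\|_q^q,\|\cdot\|_r^r$ vanish, forcing $t^-(v)\to\infty$ and $\varphi_v(t^-(v))\to\infty$; finally, since $\lambda_n^+<0$ is nondecreasing with $\sup_{\mathcal{N}^+}\Phi=0$ (realised in the limit along the same concentrating family, for which now $t^+(v)\to0$), a deformation argument showing that $\{v\in\mathcal{S}:\Phi(t^+(v)v)\le-\varepsilon\}$ has finite Krasnoselskii genus for every $\varepsilon>0$ yields $\Phi(u_{\lambda,n})=\lambda_n^+\to0^-$.

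\textbf{The remaining energy properties.} For (2), $\lambda_1^+<\lambda_1^-$ is contained in Theorem~\ref{tn}; writing $\lambda_1^-=\inf_{v\in\mathcal{S}}\varphi_v(t^-(v))$ one checks that $\lambda\mapsto\lambda_1^-$ is continuous and strictly decreasing, with limit $\inf_{v}(\tfrac1p-\tfrac1r)(\mathcal{E}_{p,\Omega}^p(v))^{\frac{r}{r-p}}(\|v\|_r^r)^{-\frac{p}{r-p}}\ge c>0$ as $\lambda\to0^+$ (again by the affine Sobolev inequality) and a negative value for $\lambda$ near $\Lambda_{\cal A}$, so the intermediate value theorem produces a unique $\bar\lambda$ with the stated signs of $\Phi(v_{\lambda,1})=\lambda_1^-$. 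For (3), the condition $q>p(1-\tfrac1N)$ should, via a scaling/interpolation estimate on $t^+(v)=\|t^+(v)v\|$, make $\mathcal{N}^+(\Phi)$ bounded in $W_0^{1,p}(\Omega)$; boundedness of $\mathcal{N}^+$ then (i) upgrades $\lambda_n^+\le\lambda_n^-$ to the strict inequality $\Phi(u_{\lambda,n})<\Phi(v_{\lambda,n})$, exactly as announced in the remark following Theorem~\ref{c1}, since $v_{\lambda,n}\in\mathcal{N}^-$ stays quantitatively away from the bounded set $\mathcal{N}^+$; (ii) makes $(u_{\lambda,n})$ bounded, so that $\Phi(u_{\lambda,n})=\lambda_n^+\to0^-$ together with the Palais--Smale condition forces $u_{\lambda,n}\to0$ in $W_0^{1,p}(\Omega)$; and (iii) the bound on $\mathcal{N}^+(\Phi)$ can be taken uniform for $\lambda$ in bounded intervals and vanishing as $\lambda\to0$, whence $\sup_n\|u_{\lambda,n}\|\to0$.
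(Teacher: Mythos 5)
Your overall strategy coincides with the paper's: compute the fibering threshold explicitly (your constant is exactly $\Lambda_{\cal A}$, and the cancellation of the powers of ${\mathcal E}_{p,\Omega}$ via the affine Sobolev inequality is the content of Corollary \ref{c0} and Lemma \ref{l0}), verify $(H2)^{\pm}$, apply Theorem \ref{tn}, and then read off the energy properties; your treatment of item (2) and of $\lambda_n^+\to 0^-$ is also essentially the paper's (the paper replaces your genus/deformation sketch by applying the unboundedness part of Theorem \ref{tn} to $-\left(\Phi_{\cal A}^\lambda\right)^{-1}$). The genuine gap is precisely where you flag it, the Palais--Smale verification, and your proposed tools do not close it. Your argument presupposes a weakly convergent subsequence, but since ${\mathcal E}_{p,\Omega}$ is not coercive, bounded energy on $\mathcal{N}_\lambda^{\pm}$ only bounds ${\mathcal E}_{p,\Omega}(u_k)$, not $\|\nabla u_k\|_p$, so $W^{1,p}_0$-boundedness is itself a nontrivial step. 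The paper's Proposition \ref{pa2} first uses the affine Rellich--Kondrachov theorem (\cite[Theorem 6.5.3]{T}) to get $u_k\to u$ in $L^s(\Omega)$, then proves $u\neq 0$ (on $\mathcal{N}_\lambda^-$ from the Nehari identity together with Lemma \ref{l0}(3), which bounds ${\mathcal E}_{p,\Omega}$ away from zero there; on $\mathcal{N}_\lambda^+$ from the identity $\lambda(\tfrac1q-\tfrac1r)\|u_k\|_q^q=(\tfrac1p-\tfrac1r){\mathcal E}^p_{p,\Omega}(u_k)-\Phi_{\cal A}^\lambda(u_k)$ and the negativity of the limiting level), and only then invokes \cite[Corollary 2.1]{LM1} to obtain boundedness in $W^{1,p}_0(\Omega)$; strong convergence follows from $\langle\Delta^{\cal A}_p u_k,u_k-u\rangle\to 0$ and \cite[Theorem 2]{LM2}, not from a Kadec--Klee type principle ``$u_k\rightharpoonup u$ and ${\mathcal E}_{p,\Omega}(u_k)\to{\mathcal E}_{p,\Omega}(u)$ imply $u_k\to u$'', which is not among the cited results and is dubious without the nonvanishing information. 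Neither the nonvanishing step nor the boundedness step appears in your outline, so $(H2)^{\pm}$ is not established.

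Part (3) also contains two faulty steps. First, the strict inequality $\Phi_{\cal A}^\lambda(u_{\lambda,n})<\Phi_{\cal A}^\lambda(v_{\lambda,n})$ does not follow from ``$\mathcal{N}_\lambda^-$ staying quantitatively away from the bounded set $\mathcal{N}_\lambda^+$'': norm separation of the two branches gives no energy gap. The actual mechanism (Lemmas \ref{k11}--\ref{ta3}, adapting Subsection \ref{ineq}) is a uniform gap along each fiber: a uniform $\varepsilon>0$ with $s(v)-t^+(v)\ge\varepsilon$, where $s(v)$ is the maximizer of $t\mapsto H_\lambda(tv)$, plus a uniform lower bound $H_\lambda'(tv)v\ge d$ on $[t^+(v),t^+(v)+\varepsilon]$, which yields $\Phi_{\cal A}^\lambda(t^+(v)v)+\delta\le\Phi_{\cal A}^\lambda(s(v)v)\le\Phi_{\cal A}^\lambda(t^-(v)v)$ uniformly and then passes to the minimax values; the boundedness of $\mathcal{N}_\lambda^+$ (Proposition \ref{pa3}) enters through these lemmas, not through a distance argument. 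Second, $u_{\lambda,n}\to 0$ strongly cannot be ``forced by Palais--Smale'': the Palais--Smale machinery is used by contradiction to rule out a nonzero weak limit, and the strong convergence to $0$ then requires the inequality ${\mathcal E}_{p,\Omega}(u)\ge C\|u\|_p^{\frac{N-1}{N}}\|\nabla u\|_p^{\frac1N}$ of \cite[Theorem 9]{HJM} — the same inequality, and the same place where $q>p\left(1-\tfrac1N\right)$ is used, that makes Proposition \ref{pa3} work — giving $\|\nabla u_{\lambda,n}\|_p\le C\lambda\|u_{\lambda,n}\|_p^{N(\frac qp-1)+1}\to 0$. So while you correctly identify boundedness of $\mathcal{N}_\lambda^+$ as the key ingredient, the arguments you give for (3) would not go through as written.
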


The latter result seems to be the first multiplicity result on \eqref{P}. The existence of a mountain-pass solution (having positive energy) of \eqref{P} has been established for $\lambda>0$ small enough in \cite{LM3}. To this end the authors consider a perturbed functional, as the validity of the Palais-Smale condition for the functional  $\Phi_{\cal  A}$ is still unknown. In the same way as in \cite{LRQS}, we overcome the perturbation argument by dealing with the Palais-Smale condition restricted to $\mathcal{N}(\Phi_{\cal  A})$. Let us note that Theorem \ref{taf} extends some well-known results for the corresponding problem involving the $p$-Laplacian operator, cf. \cite{ABC,BW,GP,I2}.

The rest of this article is organized as follows: in Section 2 we prove Theorems \ref{tn} and \ref{c1}. We also discuss the inequality $\lambda_{n,c}^+< \lambda_{n,c}^-$ related to Theorem \ref{c1}, and apply this theorem to two boundary value problems. In Section 3 we prove Theorem \ref{taf}. Finally, some auxiliary results are proved in Appendix A.

\subsection*{Notation} Throughout this article, we use the following notation:

\begin{itemize}
	\item Unless otherwise stated $\Omega$ denotes a bounded domain of $\mathbb{R}^N$ with $N\geq 1$.
	
	\item Given $r>1$, we denote by $\Vert\cdot\Vert_{r}$ (or $\Vert\cdot\Vert_{r,\Omega}$ in case we need to stress the dependence on $\Omega$) the usual norm in
	$L^{r}(\Omega)$, and by $r^*$ the critical Sobolev exponent, i.e. $r^*=\frac{Nr}{N-r}$ if $r<N$ and $r^*=\infty$ if $r \geq N$.
	
	\item Strong and weak convergences are denoted by $\rightarrow$ and
	$\rightharpoonup$, respectively.
	
	\item Given $g\in L^{1}(\Omega)$ we simply write $\int_{\Omega}g$ instead of $\int_{\Omega} g(x)\, dx$.

\end{itemize}

\section{Proofs of main results}

\subsection{Proof of Theorem \ref{tn}}

Theorem \ref{tn} relies on the homeomorphism between $\mathcal{N}^{\pm}$ and $\mathcal{S}$, which follows from the following result:
\begin{lemma}\label{l1}
Under (H1) the maps $u \mapsto t^{\pm}(u)$ are continuous in $\mathcal{S}$.
\end{lemma}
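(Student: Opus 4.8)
The plan is to argue sequentially: fix $u \in \mathcal{S}$ and an arbitrary sequence $u_n \to u$ in $\mathcal{S}$, and show $t^{\pm}(u_n) \to t^{\pm}(u)$. First I would observe that $K := \{u\} \cup \{u_n : n \in \mathbb{N}\}$ is a compact subset of $\mathcal{S}$, so by (H1)(1)--(2) the sequences $(t^+(u_n))$ and $(t^-(u_n))$ lie in a fixed compact interval $[a,b] \subset (0,\infty)$. Hence, passing to a subsequence, we may assume $t^+(u_n) \to \tau^+$ and $t^-(u_n) \to \tau^-$ with $0 < \tau^{\pm} < \infty$, and $\tau^+ \le \tau^-$ since $t^+(u_n) < t^-(u_n)$ for all $n$. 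Because $\Phi \in C^1(X \setminus \{0\})$, the map $(t,v) \mapsto \frac{d}{dt}\Phi(tv) = \Phi'(tv)v$ is continuous on $(0,\infty) \times (X \setminus \{0\})$; passing to the limit in $\frac{d}{dt}\Phi(tu_n)\big|_{t=t^{\pm}(u_n)} = 0$ yields $\frac{d}{dt}\Phi(tu)\big|_{t=\tau^{\pm}} = 0$, so $\tau^+$ and $\tau^-$ are critical points of the fibering map $t \mapsto \Phi(tu)$. By (H1) this forces $\tau^+, \tau^- \in \{t^+(u), t^-(u)\}$.

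The only delicate point is to rule out the two ``crossing'' configurations and conclude $\tau^+ = t^+(u)$, $\tau^- = t^-(u)$. For this I would use the monotonicity pattern imposed by (H1): for each $v \in X \setminus \{0\}$, since $t^+(v)$ is a local minimiser, $t^-(v)$ a local maximiser, and these are the only critical points, the derivative of the $C^1$ function $t \mapsto \Phi(tv)$ has constant sign on each of $(0,t^+(v))$, $(t^+(v),t^-(v))$, $(t^-(v),\infty)$, and the nature of the endpoints forces this sign to be negative, positive, and negative respectively; in particular $t \mapsto \Phi(tv)$ is strictly decreasing on $(0,t^+(v))$ and strictly decreasing on $(t^-(v),\infty)$. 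Now choose $s$ with $t^+(u) < s < t^-(u)$, so that $\frac{d}{dt}\Phi(tu)\big|_{t=s} > 0$, and hence $\frac{d}{dt}\Phi(tu_n)\big|_{t=s} > 0$ for $n$ large. If $\tau^+ = t^-(u)$, then $s < t^+(u_n)$ for large $n$, forcing $\frac{d}{dt}\Phi(su_n) < 0$, a contradiction; if $\tau^- = t^+(u)$, then $s > t^-(u_n)$ for large $n$, again forcing $\frac{d}{dt}\Phi(su_n) < 0$, a contradiction. Combining $\tau^+ \ne t^-(u)$ and $\tau^- \ne t^+(u)$ with $\tau^{\pm} \in \{t^+(u), t^-(u)\}$ leaves only $\tau^+ = t^+(u)$ and $\tau^- = t^-(u)$.

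Finally, since every subsequence of $(t^+(u_n), t^-(u_n))$ admits a further subsequence converging to $(t^+(u), t^-(u))$, the full sequences converge to these limits. As $u \in \mathcal{S}$ and $u_n \to u$ were arbitrary, this proves that $u \mapsto t^+(u)$ and $u \mapsto t^-(u)$ are continuous on $\mathcal{S}$. The main obstacle, as indicated, is the crossing exclusion; everything else is a routine compactness-plus-continuity argument, and the crossing is handled cleanly by evaluating the derivative of the fibering map at a single interior point $s \in (t^+(u), t^-(u))$ and using the sign of $\frac{d}{dt}\Phi(\cdot\, u_n)$ outside the interval $[t^+(u_n), t^-(u_n)]$.
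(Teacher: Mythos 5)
Your proof is correct and follows essentially the same route as the paper: sequential compactness via (H1) to extract convergent subsequences of $t^{\pm}(u_n)$, continuity of $(t,v)\mapsto \Phi'(tv)v$ to see the limits are critical points of $t\mapsto\Phi(tu)$, and then the sign pattern of the fibering derivative (negative, positive, negative) transferred from $u$ to $u_n$ to exclude the crossing cases. The only cosmetic difference is that you test the sign at one interior point $s\in(t^+(u),t^-(u))$ and treat $t^+$ and $t^-$ simultaneously, whereas the paper tests at $t^+(u)\pm\delta$ and notes the $t^-$ case is analogous.
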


\begin{proof}
We prove that $u \mapsto t^+(u)$ is continuous in $\mathcal{S}$, the proof being similar for $u \mapsto t^-(u)$. Let $(u_n) \subset \mathcal{S}$ with $u_n \to u$. By (H1) we know that $(t^+(u_n))$ is bounded and away from zero, so we can assume that $t^+(u_n) \to t_0>0$. Thus $\Phi'(t_0u)u=\lim \Phi'(t^+(u_n)u_n)u_n=0$, and it follows that either $t_0=t^+(u)$ or $t_0=t^-(u)$. In addition, if $\delta>0$ is small enough then $\Phi'((t^+(u)-\delta)u)u<0<\Phi'((t^+(u)+\delta)u)u$, so that for $n$ large enough we have $\Phi'((t^+(u)-\delta)u_n)u_n<0<\Phi'((t^+(u)+\delta)u_n)u_n$. Hence $|t^+(u_n)-t^+(u)|< \delta$ for $n$ large enough, i.e. $t_0=t^+(u)$.
\end{proof}

\begin{remark}
More generally, the previous proof shows that  the maps $u \mapsto t^{\pm}(u)$ are continuous in $X \setminus \{0\}$ under (H1).
\end{remark}

\begin{proof}[Proof of Theorem \ref{tn}]
By Lemma \ref{l1}  the map $m^{\pm}:\mathcal{S} \to \mathcal{N}^{\pm}$, $m^{\pm}(u):= t^{\pm}(u)u$ is a homeomorphism. Setting $\hat{\Psi}^{\pm}(u)=\Phi(t^{\pm}(u)u)$ for $u \in X \setminus \{0\}$ one can repeat the proof of \cite[Proposition 9]{SW} to show that $\hat{\Psi}^{\pm} \in C^1(X \setminus \{0\})$.  Note that this proof also holds for local minimisers or local maximisers. Writing $\Psi^{\pm}:=\hat{\Psi}^{\pm}|_S$ we have that $\Psi^{\pm} \in C^1(\mathcal{S})$ and $(\Psi^{\pm})'(u)=\| m^{\pm}(u)\| \Phi'(m^{\pm}(u))$ i.e. $u$ is a critical point of $\Psi^{\pm}$ if, and only if, $m^{\pm}(u)$ is a critical point of $\Phi$. Furthermore:
\begin{itemize}
\item $(m^{\pm}(u_n))$ is a Palais-Smale sequence for $\Phi$ whenever $(u_n)$ is a Palais-Smale sequence for $\Psi^{\pm}$ (with in addition $(t^+(u_n))$ bounded away from zero in case of $\Psi^+$).
\item Any bounded Palais-smale sequence of $\Phi$ in $\mathcal{N}^{\pm}$ yields a Palais-Smale sequence of $\Psi^{\pm}$.
\end{itemize}
The Ljusternik-Schnirelman principle applied to $\Psi^{\pm}$ then shows that $\lambda_n^{\pm}$ are critical values of $\Phi$. The unboundedness of $(\lambda_n^\pm)$ follows from the fact that
the set $K_c:=\{u \in X \setminus \{0\}: \Phi'(u)=0, \Phi(u)=c\}$ is compact for every $c>0$, by the Palais-Smale condition. A standard argument involving the Krasnoselskii genus and a deformation lemma yields the desired conclusion, see e.g. the proof of \cite[Theorem 2.3(ii)]{A} for a similar argument. Finally, $\Phi(t^+(u)u)<\Phi(t^-(u)u)$ for any $u \in X \setminus \{0\}$ implies that $\lambda_n^+ \le \lambda_n^-$ for any $n \ge 1$, this inequality being strict for $n=1$ since $\lambda_1^{\pm}=\displaystyle \min_{u \in \mathcal{S}} \Phi(t^{\pm}(u)u)$.
\end{proof}


\subsection{Proof of Theorem \ref{c1}}

In the sequel we assume the conditions of Theorem \ref{c1}. The proof of this result is a direct consequence of Theorem \ref{tn} and the next results:
\begin{proposition} \label{p2}
 $\lambda_c$ satisfies (H1) and $\mathcal{N}(\lambda_c)$ is bounded away from zero, i.e. $\displaystyle \inf_{u \in \mathcal{N}(\lambda_c)} \|u\|>0$.
\end{proposition}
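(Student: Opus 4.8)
The plan is to reduce everything to an analysis of the function $t \mapsto H(tu)$ via the fibering map of $\lambda_c$. First I would record the identity for this fibering map: using that $I_2$ is $\alpha$-homogeneous and $I_2(u)>0$, a direct differentiation of $\lambda_c(tu)=(I_1(tu)-c)/(t^\alpha I_2(u))$ in $t>0$ gives
\[
\frac{d}{dt}\lambda_c(tu)=\frac{H(tu)+\alpha c}{t^{\alpha+1}I_2(u)},
\]
so that, $I_2(u)$ being positive, the sign of $\frac{d}{dt}\lambda_c(tu)$ coincides with that of $g_u(t):=H(tu)+\alpha c$. (Taking $t=1$ recovers the known description $\mathcal{N}(\lambda_c)=\{u\neq 0:H(u)+\alpha c=0\}$.) Thus (H1) becomes a statement about the zeros of $g_u$.

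Next I would analyse $g_u$ for $u\in X\setminus\{0\}$. Since $I_1\in C^1(X)$ and $I_1(0)=0$, the function $H$ is continuous with $H(0)=0$, hence $g_u(0^+)=\alpha c<0$. Writing $u=\|u\|v$ with $v\in\mathcal{S}$ and substituting, $\max_{t>0}H(tu)=\max_{t>0}H(tv)=H(s(v)v)\ge h_0$ by (F1) and the definition of $h_0$; since $-\alpha c<h_0$ this yields $g_u(s(u))=H(s(u)u)+\alpha c>0$. Combining this with the monotonicity in (F1) (increasing on $(0,s(u))$, decreasing on $(s(u),\infty)$) and $\lim_{t\to\infty}H(tu)=-\infty$, the function $g_u$ has exactly two zeros $0<t^+(u)<s(u)<t^-(u)<\infty$, is negative on $(0,t^+(u))$ and on $(t^-(u),\infty)$, and positive in between. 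Therefore $t\mapsto\lambda_c(tu)$ has exactly two critical points, $t^+(u)$ being a local minimiser and $t^-(u)$ a local maximiser, which is the first part of (H1).

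For the boundedness requirements I would first prove $\inf_{\mathcal{N}(\lambda_c)}\|\cdot\|>0$: since $\mathcal{N}(\lambda_c)=\{u\neq 0:H(u)=-\alpha c\}$, $H$ is continuous, and $H(0)=0\neq-\alpha c$, there is $\delta>0$ with $H(u)\neq-\alpha c$ whenever $\|u\|<\delta$, so $\mathcal{N}(\lambda_c)\cap B_\delta(0)=\emptyset$. As $t^\pm(u)u\in\mathcal{N}(\lambda_c)$ and $\|t^\pm(u)u\|=t^\pm(u)$ for $u\in\mathcal{S}$, this gives $t^\pm(u)\ge\delta$ on all of $\mathcal{S}$. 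For the upper bounds, given a compact $C\subset\mathcal{S}$ --- norm-compact, hence weakly compact, and contained in $X\setminus\{0\}$ --- condition (F1) provides $T>0$ such that $g_u(t)<0$ for all $t\ge T$ and $u\in C$; since $g_u(t^-(u))=0$ this forces $t^+(u)<s(u)<t^-(u)<T$ for every $u\in C$. This establishes (H1)(1)--(2) and completes the proof.

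The only point requiring care --- there is no substantial obstacle --- is transferring the hypotheses on $H$ (essentially (F1) together with the threshold $0<-\alpha c<h_0$) to the fibering map of $\lambda_c$ so as to obtain exactly the two-critical-point structure of (H1) with the correct ordering and nature; the threshold is used precisely to guarantee that $g_u$ overshoots the level $0$ at $t=s(u)$, while the boundedness claims reduce to continuity of $H$ at the origin and to the uniform decay of $H$ along rays supplied by (F1).
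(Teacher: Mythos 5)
Your proof is correct and follows essentially the same route as the paper: reduce the critical points of $t\mapsto\lambda_c(tu)$ to the level equation $H(tu)=-\alpha c$, use (F1) together with $0<-\alpha c<h_0$ to get exactly two crossings $t_c^+(u)<s(u)<t_c^-(u)$ with the stated min/max nature, and derive the bounds on $t^{\pm}$ from the behavior of $H$ at zero (continuity, $H(0)=0$) and at infinity (the uniform decay on weakly compact sets). You merely spell out the details the paper leaves implicit, including the explicit $\delta$-ball argument for $\inf_{\mathcal{N}(\lambda_c)}\|u\|>0$, which is fine.
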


\begin{proof}
	Indeed, we know that the map $t \mapsto \lambda_c(tu)$ has a critical point if, and only if, $H(tu)=-\alpha c$.
	Since $H(0)=0< -\alpha c$, we see that under $(F1)$  the map $t \mapsto \lambda(tu)$ has two critical points if, and only if, $-\alpha c< \displaystyle \max_{t>0} H(tu)$. Thus $0<-\alpha  c<h_0$ implies that for every $u \in X \setminus \{0\}$ the map $t \mapsto \lambda_c(tu)$ has two critical points $t_c^+(u)<t_c^-(u)$, with $t_c^+(u)$ a local minimum point and $t_c^-(u)$ a local maximum point. We also see from $H(t_c^{\pm}(u)u)=-\alpha c$ and the behavior of $H$ at zero and infinity that $u \mapsto t_c^{\pm}(u)$ are bounded away from zero on $\mathcal{S}$ and bounded from above in any compact subset of $\mathcal{S}$. 
\end{proof}


	

\begin{proposition}\label{p3}
There holds $\lambda_c(u_n) \to \infty$ if $(u_n) \subset \mathcal{N}(\lambda_c)$ and either $u_n \rightharpoonup 0$ or $\|u_n\| \to \infty$.
In particular $\lambda_c$ is coercive and bounded from below on $\mathcal{N}(\lambda_c)$, and satisfies the Palais-Smale condition therein.
\end{proposition}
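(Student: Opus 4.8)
The plan is to first establish the limit $\lambda_c(u_n) \to \infty$ along sequences in $\mathcal{N}(\lambda_c)$ that either converge weakly to $0$ or are unbounded, and then harvest coercivity, boundedness from below, and the Palais--Smale condition as consequences. The starting point is the identity characterising $\mathcal{N}(\lambda_c)$ from \cite[Subsection 2.1]{LRQS}, namely $H(u) + \alpha c = 0$, i.e. $I_1'(u)u - \alpha I_1(u) = -\alpha c$ for every $u \in \mathcal{N}(\lambda_c)$. Writing $I_1 = J - K$ as in $(F1)$, this reads $J'(u)u - \alpha J(u) - (K'(u)u - \alpha K(u)) = -\alpha c$. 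Using the convexity-type inequality in $(F1)$ with $v = 0$ one gets $J'(u)u \geq J'(u)u - J'(0)u = (J'(u)-J'(0))(u-0) \geq C_2(\|u\|^{\eta-1} - 0)(\|u\| - 0) = C_2 \|u\|^{\eta}$ (after checking $J'(0)=0$, which follows from $J(u) \geq C_1\|u\|^\beta \geq 0 = J(0)$ being a minimum, or directly from homogeneity-free estimates), while $J(u) \geq C_1 \|u\|^\beta$. The complete continuity of $K'$ and $I_2'$ will control the $K$-terms along weakly convergent sequences.

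First I would treat the case $u_n \rightharpoonup 0$. Then $K'(u_n) \to 0$ strongly (complete continuity of $K'$), hence $K'(u_n)u_n \to 0$ and, since $K \in C^1$ with $K(0)=0$, also $K(u_n) \to 0$ (e.g. by writing $K(u_n) = \int_0^1 K'(tu_n)u_n\,dt$ and using that $\|u_n\|$ is bounded in this case — if $\|u_n\|$ is bounded). Actually the cleanest route is to observe that $\alpha \lambda_c(u_n) = \alpha \frac{I_1(u_n) - c}{I_2(u_n)}$ and that on $\mathcal{N}(\lambda_c)$ we have $\alpha I_1(u_n) = I_1'(u_n)u_n + \alpha c$, so $\alpha(I_1(u_n) - c) = I_1'(u_n)u_n$, giving $\alpha\lambda_c(u_n) = \frac{I_1'(u_n)u_n}{I_2(u_n)} = \frac{J'(u_n)u_n - K'(u_n)u_n}{I_2(u_n)} \geq \frac{C_2\|u_n\|^\eta - K'(u_n)u_n}{I_2(u_n)}$. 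For the denominator, $I_2'$ completely continuous and $\alpha$-homogeneity give $I_2(u) = \frac{1}{\alpha} I_2'(u)u \leq \frac{1}{\alpha}\|I_2'(u)\|\,\|u\|$; moreover Proposition \ref{p2} gives $\|u_n\| \geq \rho > 0$, and one shows $I_2(u_n)/\|u_n\|^\eta \to 0$ in both regimes: if $u_n \rightharpoonup 0$ then $I_2'(u_n) \to 0$ so $I_2(u_n) = o(\|u_n\|)$ and, since $\|u_n\|\ge\rho$, the ratio with $\|u_n\|^\eta$ still tends to $0$; if $\|u_n\| \to \infty$ then $\eta > 1$ forces $\|u_n\|/\|u_n\|^\eta \to 0$, while $I_2(u_n)/\|u_n\|^\alpha$ stays bounded (weak continuity of $I_2$ composed with homogeneity, or boundedness of $I_2$ on the sphere) and $\beta \geq \alpha$ with $\eta>1$ handles the bookkeeping. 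In parallel $K'(u_n)u_n/\|u_n\|^\eta \to 0$ using complete continuity in the first regime and the subhomogeneous growth of $K'$ relative to $\|u\|^{\eta}$ together with $\eta>1$ in the second (this may need $K'$ to have growth controlled by that of $J'$, which is implicit in the applications; I would state it as following from the standing hypotheses). Combining, $\alpha\lambda_c(u_n) \geq \|u_n\|^\eta \big(C_2 + o(1)\big)\big/\big(o(\|u_n\|^\eta)\big) \to \infty$.

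From $\lambda_c(u_n)\to\infty$ on such sequences the three remaining claims follow by standard arguments. Coercivity on $\mathcal{N}(\lambda_c)$ is the contrapositive of the $\|u_n\| \to \infty$ case. Boundedness from below: if $\inf_{\mathcal{N}(\lambda_c)}\lambda_c = -\infty$, take a minimising sequence; it cannot be unbounded (coercivity) nor have a subsequence with $u_n \rightharpoonup 0$ (by the above), and along any subsequence with $u_n \rightharpoonup u \neq 0$ one passes to the limit using complete continuity of $K'$, $I_2'$ and weak lower semicontinuity coming from $(F1)$ to get a finite lower bound — contradiction. For Palais--Smale: let $(u_n) \subset \mathcal{N}(\lambda_c)$ with $\lambda_c(u_n)$ bounded and $\lambda_c'(u_n)\to 0$; coercivity gives $\|u_n\|$ bounded, so up to a subsequence $u_n \rightharpoonup u$, and $u \neq 0$ since $\lambda_c(u_n) \not\to \infty$; then $K'(u_n) \to K'(u)$ and $I_2'(u_n)\to I_2'(u)$ strongly, and from the expression of $\lambda_c'$ together with the uniform-convexity/$(S_+)$-type monotonicity furnished by the inequality in $(F1)$ on $J'$ (the map $J'$ is of type $(S_+)$ because $(J'(u_n)-J'(u))(u_n-u) \to 0$ with $u_n \rightharpoonup u$ forces $\|u_n\| \to \|u\|$, hence $u_n \to u$ in the uniformly convex space $X$) one concludes $u_n \to u$ strongly, and $u \in \mathcal{N}(\lambda_c)$ by continuity. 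I expect the main obstacle to be the bookkeeping in the denominator estimate — precisely, showing $I_2(u_n)$ (and the $K'$-term) are genuinely of lower order than $\|u_n\|^\eta$ in \emph{both} regimes simultaneously, which is where the hypotheses $\beta \geq \alpha$, $\eta > 1$, and complete continuity of $I_2'$ must be used in a coordinated way; the $(S_+)$ argument for Palais--Smale is by contrast routine once coercivity is in hand.
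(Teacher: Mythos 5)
Your reduction to the identity $\alpha\,\lambda_c(u_n)=\dfrac{I_1'(u_n)u_n}{I_2(u_n)}=\dfrac{J'(u_n)u_n-K'(u_n)u_n}{I_2(u_n)}$ on $\mathcal{N}(\lambda_c)$ is correct, and it does settle the case $u_n\rightharpoonup 0$ (numerator bounded below by $C_2\rho^\eta+o(1)$ via Proposition \ref{p2}, denominator $I_2(u_n)=\frac1\alpha I_2'(u_n)u_n\to 0^+$). The Palais--Smale part also mirrors the paper's argument. But the case $\|u_n\|\to\infty$ has a genuine gap, and you in fact flag it yourself: you need $K'(u_n)u_n=o(\|u_n\|^\eta)$ and $I_2(u_n)=o(\|u_n\|^\eta)$ as $\|u_n\|\to\infty$, and neither follows from $(F1)$--$(F2)$. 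Complete continuity of $K'$ only controls $K'$ along \emph{bounded} (weakly convergent) sequences and imposes no growth restriction at infinity, so ``growth of $K'$ controlled by that of $J'$'' is an extra hypothesis, not something implicit in the standing assumptions. Likewise $I_2(u_n)\le C\|u_n\|^\alpha$ only helps if $\eta>\alpha$, but the hypotheses assume $\beta\ge\alpha$ and $\eta>1$ with no comparison between $\eta$ and $\alpha$. So the estimate performed directly at the points $u_n$ cannot be closed with the given assumptions. (A secondary, fixable point: $J'(0)=0$ is not available — the hypotheses only give $J(0)\ge 0$, not that $0$ minimizes $J$ — though the stray term $J'(0)u_n$ is harmless in both regimes since $J'(0)u_n\to0$ when $u_n\rightharpoonup0$ and $|J'(0)u_n|=o(\|u_n\|^\eta)$ when $\|u_n\|\to\infty$.)

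The missing idea is the paper's reduction to a \emph{bounded} point on each ray. When $\|u_n\|\to\infty$, the uniform limit in $(F1)$ together with $H(u_n)=-\alpha c$ forces $v_n:=u_n/\|u_n\|\rightharpoonup 0$. Writing $\lambda_0(u)=I_1(u)/I_2(u)$, one has $\frac{d}{dt}\lambda_0(tv)=H(tv)/\bigl(t^{\alpha+1}I_2(v)\bigr)$, and since $H(tv)>0$ for $0<t<t_c^+(v)$, the map $t\mapsto\lambda_0(tv_n)$ is increasing on $(0,t_c^+(v_n))$, while $t_c^+(v_n)\ge\delta>0$ by Proposition \ref{p2}. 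Hence $\lambda_c(u_n)\ge\lambda_0(t_c^+(v_n)v_n)\ge\lambda_0(\delta v_n)\ge\bigl(C_1\delta^{\beta-\alpha}-\delta^{-\alpha}K(\delta v_n)\bigr)/I_2(v_n)$, where now everything is evaluated at the bounded sequence $\delta v_n\rightharpoonup 0$, so complete continuity gives $K(\delta v_n)\to0$ and $I_2(v_n)\to0$, and $\beta\ge\alpha$ enters only through the fixed constant $\delta^{\beta-\alpha}$. This fibre-monotonicity step is what your direct estimate at $u_n$ replaces by unavailable growth conditions; without it (or some substitute), the unbounded case of your argument does not go through.
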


\begin{proof}
Let $(u_n) \subset \mathcal{N}(\lambda_c)$. By Proposition \ref{p2} there exists $C>0$ such that $\|u_n\|\geq C$ for every $n$. 
\begin{itemize}
\item If $u_n \rightharpoonup 0$ then by (F2) we have that $K(u_n) \to 0$ and $J(u_n) \geq C$ for some $C>0$ and every $n$. Since $I_2(u_n) \to 0$ and $c<0$ we deduce that $\lambda_c(u_n) \to \infty$.\\

\item If $\|u_n\| \to \infty$ then, since $\displaystyle \lim_{t \to \infty} H(tu)=-\infty$ uniformly on weakly compact subsets of $X \setminus \{0\}$ and $H(u_n)=-\alpha c$ we infer that  $v_n:=\frac{u_n}{\|u_n\|} \rightharpoonup 0$. Writing $\lambda_0(u):=\frac{I_1(u)}{I_2(u)}$ we have that $\frac{d}{dt}\lambda_0(tu)=\frac{H(tu)}{t^{\alpha+1}I_2(u)}$. Hence from (H1) we see that for any $u \in X \setminus \{0\}$ the map $t \mapsto \lambda_0(tu)$ has a unique critical point $t_0(u)>0$. Moreover, since $H(tu)>0$ for $0<t<t_c^+(u)$
  we have that $t \mapsto \lambda_0(tu)$ is increasing in $(0,t^+(u))$. In addition, we know that there exists $\delta>0$ such that $t^+(v_n) \geq \delta$ for every $n$. It follows that
$$\lambda_c(u_n)= \lambda_c(t^+(v_n)v_n)\geq \lambda_0(t^+(v_n)v_n) \geq \lambda_0(\delta v_n)\geq \frac{\frac{1}{2}\delta^{\beta-\alpha}-\frac{K(\delta v_n)}{\delta^{\alpha}}}{I_2(v_n)}.$$
Since $v_n \rightharpoonup 0$ in $X$ we have $K(\delta v_n), I_2(v_n) \to 0$, so that $\lambda_c(u_n) \to \infty$. 
\end{itemize}
Finally, if $(u_n) \subset \mathcal{N}(\lambda_c)$ is a Palais-Smale sequence for $\lambda_c$ then by the previous results we know that $(u_n)$ is bounded, so that up to a subsequence $u_n \rightharpoonup u \neq 0$ in $X$. The proof that $u_n \to u$ in $X$ can now be carried out as in \cite[Lemma 3.5]{LRQS}.
\end{proof}




\subsection{On the inequality $\lambda_{n,c}^+< \lambda_{n,c}^-$} \label{ineq}
\medskip

Throughout this subsection we assume the conditions of Theorem \ref{c1}. We shall prove that under some additional conditions (which are satisfied in our applications of Theorem \ref{c1}) we have $\lambda_{n,c}^+< \lambda_{n,c}^-$ for every $n$ and $c<0$ such that $0<-\alpha  c<h_0$. To this end we assume that 
\begin{equation*}
	J(u)=\frac{1}{\beta}\|u\|^\beta+A(u),\ u\in X,
\end{equation*}
where $A$ is bounded on bounded sets, non-negative and $\beta'$-homogeneous for some $\beta'\ge \alpha$. Let us set $P(u):=J'(u)u-\alpha J(u) $ and $Q(u):=\alpha K(u)-K'(u)u$ for $u\in X$, and assume that there exists $p>\beta$ and $C>0$ such that $Q(u)\ge -C \|u\|^p$ for every $u\in X$.  We also assume that $H \in C^1(X)$.

The main property needed in the proof of the inequality $\lambda_{n,c}^+< \lambda_{n,c}^-$ is the boundedness of $\mathcal{N}^+(\lambda_c)$, which we prove next. We also show that this set shrinks to zero as $c \to 0^-$, and $\mathcal{N}^-(\lambda_c)$ is away from zero in the weak topology of $X$:
\begin{lemma}\label{k0} \strut
\begin{enumerate}
\item $\mathcal{N}^+(\lambda_c)$ is bounded and $\displaystyle \sup_{u \in \mathcal{N}^+(\lambda_c)}\|u\| \to 0$ as  $c\to 0^-$.
\item $\mathcal{N}^-(\lambda_c)$ is weakly bounded away from zero, i.e. there is no $(u_n) \subset \mathcal{N}^-(\lambda_c)$ such that $u_n \rightharpoonup 0$ in $X$.
\end{enumerate}	

\end{lemma}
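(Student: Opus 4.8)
The plan is to exploit the identity $H(u) = -\alpha c$ on $\mathcal{N}(\lambda_c)$ together with the decomposition $H = P - Q$ coming from $I_1 = J - K$. First I would record that $P(u) = J'(u)u - \alpha J(u)$, and using $J(u) = \frac{1}{\beta}\|u\|^\beta + A(u)$ with $A$ being $\beta'$-homogeneous and non-negative, a direct computation gives $P(u) = (\beta - \alpha)\frac{1}{\beta}\|u\|^\beta + (\beta' - \alpha)A(u) \ge \frac{\beta-\alpha}{\beta}\|u\|^\beta \ge 0$ since $\beta \ge \alpha$ and $\beta' \ge \alpha$ and $A \ge 0$. Combined with $Q(u) \ge -C\|u\|^p$ this yields a lower bound $H(u) = P(u) - Q(u) \ge \frac{\beta-\alpha}{\beta}\|u\|^\beta - C\|u\|^p$ — wait, the sign needs care: $H = I_1'(u)u - \alpha I_1(u) = P(u) - (K'(u)u - \alpha K(u)) = P(u) + Q(u)$, so in fact $H(u) \ge \frac{\beta-\alpha}{\beta}\|u\|^\beta - C\|u\|^p$ using $Q(u) \ge -C\|u\|^p$. (I would double-check this sign bookkeeping against the definition $Q(u) = \alpha K(u) - K'(u)u$ in the excerpt before committing.)

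For part (1), I would localize to $\mathcal{N}^+(\lambda_c)$, where the point lies on the \emph{increasing} branch of $t \mapsto H(tu)$, i.e. below the global maximizer $s(u)$. On this branch, I claim $\|u\|$ cannot be too large: if $u = t^+(v)v$ with $v \in \mathcal{S}$ had large norm, then since $H(u) = -\alpha c$ is a fixed small positive quantity while the lower bound $\frac{\beta-\alpha}{\beta}\|u\|^\beta - C\|u\|^p$ is eventually \emph{negative} as $\|u\| \to \infty$ (because $p > \beta$), this alone does not immediately contradict; instead I use that on $\mathcal{N}^+$ we are below $s(u)$, where $H$ is increasing, so $H(u) \le H$ evaluated at... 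Actually the cleaner route: on $(0, s(v))$ for $v \in \mathcal{S}$, $H(tv)$ increases from $0$ to $h(v) := \max_t H(tv) \ge h_0$. Near $t = 0$, $H(tv) = \frac{\beta-\alpha}{\beta}t^\beta + (\beta'-\alpha)t^{\beta'}A(v) + Q(tv)$, and since $Q(tv) \ge -Ct^p\|v\|^p = -Ct^p$ with $p > \beta$, for $t \le t_0$ small (uniformly in $v \in \mathcal{S}$, using $A$ bounded on bounded sets) we get $H(tv) \ge \frac{\beta-\alpha}{2\beta}t^\beta$, hence $H(tv)$ is strictly increasing and positive on $(0, t_0]$. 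Then $t^+(v) < t_0$ forces $-\alpha c = H(t^+(v)v) \ge \tfrac{\beta-\alpha}{2\beta}(t^+(v))^\beta$ whenever $t^+(v) \le t_0$; conversely if $t^+(v) > t_0$ were possible, monotonicity gives $-\alpha c = H(t^+(v)v) \ge H(t_0 v) \ge \frac{\beta-\alpha}{2\beta}t_0^\beta$, which is a fixed positive number — so for $-\alpha c < \frac{\beta-\alpha}{2\beta}t_0^\beta$ we must have $t^+(v) \le t_0$ for all $v$, and then $(t^+(v))^\beta \le \frac{2\beta}{\beta-\alpha}(-\alpha c)$, i.e. $\sup_{\mathcal{N}^+(\lambda_c)}\|u\| \le \left(\tfrac{2\beta}{\beta-\alpha}(-\alpha c)\right)^{1/\beta} \to 0$ as $c \to 0^-$. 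This gives both assertions of (1) at once.

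For part (2), I argue by contradiction: suppose $(u_n) \subset \mathcal{N}^-(\lambda_c)$ with $u_n \rightharpoonup 0$. Then $K'(u_n)u_n \to 0$ and $K(u_n) \to 0$ by complete continuity of $K'$ (from (F2)), so $Q(u_n) \to 0$. If $(u_n)$ is bounded then $H(u_n) = P(u_n) + Q(u_n) \ge \frac{\beta-\alpha}{\beta}\|u_n\|^\beta + o(1)$; but also, writing $v_n = u_n/\|u_n\|$, on $\mathcal{N}^-$ the point $u_n = t^-(v_n)v_n$ lies \emph{past} the maximizer, and $H(u_n) = -\alpha c > 0$ forces $\|u_n\|$ bounded away from $0$ — but that contradicts nothing yet. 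The real point: the function $t \mapsto \lambda_0(tv)$ argument from Proposition \ref{p3}, or more directly, the hypothesis in (F1) that $\lim_{t\to\infty}H(tu) = -\infty$ \emph{uniformly on weakly compact subsets}, shows $\|u_n\|$ cannot go to $\infty$ (else $v_n \rightharpoonup 0$ and $H(u_n) \to -\infty \ne -\alpha c$); so $(u_n)$ is bounded, hence $u_n \rightharpoonup 0$ is itself in a weakly compact set, and then along the fiber $t \mapsto H(tv_n)$ with $t^-(v_n) \le \|u_n\|$ bounded — but by (F1) uniformity applied to the weakly convergent sequence $v_n$... I would instead note $H(t^-(v_n)v_n) = -\alpha c$ while $P(t^-(v_n)v_n) \ge 0$ and $Q(t^-(v_n)v_n) = \alpha K(t^-(v_n)v_n) - K'(t^-(v_n)v_n)(t^-(v_n)v_n) \to 0$ as $u_n \rightharpoonup 0$, giving $0 < -\alpha c = P(u_n) + o(1)$, consistent; so I need the \emph{upper} direction. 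Since $t^-(v_n) \ge \delta > 0$ and $v_n \rightharpoonup 0$, and $s(v_n) < t^-(v_n)$, I use that $H(s(v_n)v_n) \le h(v_n)$, and... the cleanest contradiction is: $u_n \rightharpoonup 0$ bounded $\Rightarrow H(u_n) \to H$ of weak limit direction behavior — actually $H$ need not be weakly continuous. I expect \textbf{this is the main obstacle}: ruling out $u_n \rightharpoonup 0$ on the maximizer branch. I would resolve it by showing that $u_n \rightharpoonup 0$ (with $(u_n)$ bounded, as just argued) forces $Q(u_n) \to 0$ and $A(u_n) \to 0$ (from weak convergence plus the structure), whence $-\alpha c = H(u_n) = \frac{\beta-\alpha}{\beta}\|u_n\|^\beta + o(1)$ pins $\|u_n\| \to \ell := \left(\tfrac{\beta}{\beta-\alpha}(-\alpha c)\right)^{1/\beta}$; but on $\mathcal{N}^-$ we have $\frac{d^2}{dt^2}\lambda_c(tu)\big|_{t=t^-(v_n)} \le 0$, equivalently a one-sided inequality on $H'(u_n)u_n$, namely $H'(t^-(v_n)v_n)(t^-(v_n)v_n) \le 0$ — and computing $H'(u)u = (\beta-\alpha)\|u\|^\beta$-type leading term $+ \,$ lower-order (weakly vanishing) terms, this would give $(\beta-\alpha)\ell^\beta \le 0 + o(1)$, contradicting $\ell > 0$, $\beta > \alpha$. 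The admissible resolution is therefore: combine $H(u_n) = -\alpha c$ with the derivative sign condition $\frac{d}{dt}H(tv_n)\big|_{t^-(v_n)} \le 0$ (which holds since $t^-(v_n) > s(v_n)$, the unique maximizer of $H(\cdot\, v_n)$), expand both using $J$'s structure and $Q \to 0$, and derive the contradiction $0 < -\alpha c \le 0$. If $H \notin C^1$ at the relevant points one uses difference quotients; $H \in C^1(X)$ is assumed in this subsection, so this is legitimate.
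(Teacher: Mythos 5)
Your argument for part (2) and for the decay statement in part (1) is essentially the paper's own: the decomposition $H=P+Q$ with $P(u)=\frac{\beta-\alpha}{\beta}\|u\|^\beta+(\beta'-\alpha)A(u)$, the lower bound $H(u)\ge C_1\|u\|^\beta-C_2\|u\|^p$ for the shrink-to-zero claim, and on $\mathcal{N}^-$ the sign condition $H'(u_n)u_n\le 0$ expanded into $(\beta-\alpha)\|u_n\|^\beta\le -Q'(u_n)u_n+o(1)$, forcing $\|u_n\|\to 0$ and contradicting that $\mathcal{N}(\lambda_c)$ is bounded away from zero (equivalently $H(u_n)=-\alpha c\neq 0=H(0)$).

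There is, however, a genuine gap in part (1): your proof of boundedness of $\mathcal{N}^+(\lambda_c)$ only works when $-\alpha c<\frac{\beta-\alpha}{2\beta}t_0^\beta$, i.e.\ below a threshold determined by the constants in the bound $H(u)\ge C_1\|u\|^\beta-C_2\|u\|^p$. Since $H(tv)\ge m(t):=C_1t^\beta-C_2t^p$ for $v\in\mathcal{S}$, one always has $h_0\ge \max_t m(t)$, and typically strictly, so your argument says nothing about $c$ with $-\alpha c$ between that threshold and $h_0$ — yet the lemma is stated (and later used, e.g.\ in Lemma \ref{k3} and in the theorem $\lambda_{n,c}^+<\lambda_{n,c}\le\lambda_{n,c}^-$) for the whole range $0<-\alpha c<h_0$; the norm lower bound on $H$ is simply useless at the scales where $t_c^+(v)$ might be large. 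The paper closes this case differently: since $P(tv)\ge\frac{\beta-\alpha}{\beta}t^\beta$ uniformly on $\mathcal{S}$, given any $D>-\alpha c$ one fixes $t'$ with $P(t'v)>D$ for all $v\in\mathcal{S}$, and splits $\mathcal{S}$ into $V_\varepsilon=\{|Q(t'v)|<\varepsilon\}$ (there $H(t'v)>D-\varepsilon>-\alpha c$, which forces $t_c^+(v)<t'$ because $H(tv)>-\alpha c$ only for $t\in(t_c^+(v),t_c^-(v))$) and $W_\varepsilon=\mathcal{S}\setminus V_\varepsilon$, where an unbounded sequence $t_c^+(v_n)\to\infty$ with $H(t_c^+(v_n)v_n)=-\alpha c$ would force $v_n\rightharpoonup 0$ by (F1) and hence $Q(t'v_n)\to 0$ by complete continuity of $K'$, contradicting $|Q(t'v_n)|\ge\varepsilon$. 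You need an argument of this type (or some substitute) to get boundedness for all admissible $c$. Two smaller points in your part (2): the detour ruling out $\|u_n\|\to\infty$ is unnecessary (a weakly convergent sequence is automatically bounded) and, as written, incorrect, since the uniform limit in (F1) is only over weakly compact subsets of $X\setminus\{0\}$ and does not apply when $v_n\rightharpoonup 0$; and the interim claim $A(u_n)\to 0$ for $u_n\rightharpoonup 0$ is unjustified (take $A(u)=\|u\|^{\beta'}$) — but your final resolution does not use it, so these do not affect the conclusion of (2).
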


\begin{proof}\strut
	\begin{enumerate}
\item	Indeed, note that
	\begin{equation}\label{Pexpression}
		P(u)=\frac{\beta-\alpha}{\beta}\|u\|^\beta+(\beta'-\alpha)A(u),\ u\in X.
	\end{equation}
	Thus it is clear that $\displaystyle \lim_{t\to \infty}P(tv)=\infty$ uniformly for $v\in \mathcal{S}$. Given $D>-\alpha c$, we choose $t'>0$ such that $P(t'v)>D$ for any $v\in \mathcal{S}$, so that 
	\begin{equation*}
	H(t'v)>D+Q(t'v), \quad \mbox{for any } v\in S.
	\end{equation*}
	Take $\varepsilon>0$ such that $D-\varepsilon>-\alpha c$, and set $V_\varepsilon=\{v\in \mathcal{S}: \left|Q(t'v)\right|< \varepsilon\}$ and $W_\varepsilon= \mathcal{S}\setminus V_\varepsilon$. Note that $v \mapsto t_c^+(v)$ is bounded on $W_\varepsilon$ since, on the contrary, we would find a sequence $(v_n) \subset W_\varepsilon$ such that $t_n:=t_c^+(v_n)\to \infty$ and $H(t_nv_n)=-\alpha c$ for all $n$. Therefore $v_n \rightharpoonup 0$ and $Q(t'v_n)\to 0$, a contradiction. 
	
	Now we claim that $v \mapsto t_c^+(v)$ is also bounded on $V_\varepsilon$. Indeed, note that on $V_\varepsilon$ we have
	\begin{equation*}
	H(t'v)>D-\varepsilon>-\alpha c, 
	\end{equation*}
	which implies that $t_c^+(v)<t'$ for all $v\in  V_\varepsilon$. Thus $v \mapsto t_c^+(v)$ is bounded in $\mathcal{S}$, and the first claim is proved.
   Now note that there exist $C_1,C_2>0$ such that
	\begin{equation*}
		H(u)\ge C_1\|u\|^\beta-C_2\|u\|^p,\ u\in X.
	\end{equation*}
	Consider the function  $t\mapsto m(t)=C_1t^\beta-C_2t^p$, $t>0$, and note that it has a unique global maximizer at $\delta_0$. It follows that $m$ is increasing in $[0,\delta_0]$, $m(0)=0$ and $m(t)>0$ for all $t\in (0,\delta_0]$. Therefore
	\begin{equation*}
		H(v)\ge m(\delta),\ \|v\|=\delta.
	\end{equation*}
	Now given $c<0$ such that $-\alpha c<m(\delta_0)$ it follows that there exists $\delta(c)\in (0,\delta_0)$ such that $m(\delta(c))>-\alpha c$ and hence 	$H(v)\ge m(\delta)>-\alpha c$ whenever $\|v\|=\delta$, which implies that $t_c^+(v)< \delta(c)$ for all $v\in \mathcal{S}$. It is also clear that $\delta(c)$ can be chosen in such a way that $\delta(c) \to 0$ as $c\to 0^-$, which completes the proof.
\item Since $\mathcal{N}^-(\lambda_c)$ is bounded away from zero it suffices to show that $u_n \to 0$ in $X$ if $(u_n) \subset \mathcal{N}^-(\lambda_c)$ and $u_n \rightharpoonup 0$ in $X$. Indeed, in this case since $H'(u_n)u_n<0$ for every $n$ we have that
$$	0 \le \liminf (\beta-\alpha)\|u_n\|^\beta \le\liminf  P'(u_n)u_n\le \liminf -Q'(u_n)u_n=0,$$
and we obtain the desired conclusion.
\end{enumerate}	
\end{proof}

\begin{lemma}\label{k1} If $-\alpha c<h_0$ then there exists $\varepsilon>0$ such that $s(v)-t_c^+(v)\ge \varepsilon$ for all $v\in S$.
\end{lemma}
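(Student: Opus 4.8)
The plan is to argue by contradiction. Assume there is a sequence $(v_n)\subset\mathcal{S}$ with $s(v_n)-t_c^+(v_n)\to 0$, and set $a_n:=t_c^+(v_n)$ and $b_n:=s(v_n)$, so that $0<a_n\le b_n$ and $b_n-a_n\to 0$. By Lemma \ref{k0}(1) the set $\mathcal{N}^+(\lambda_c)$ is bounded, so $(a_n)$ is bounded, and hence $(b_n)$ is bounded as well. On the one hand, by the definition of $t_c^+$ we have $H(a_nv_n)=-\alpha c$; on the other hand, (F1) and \eqref{cal} give $H(b_nv_n)=\max_{t>0}H(tv_n)\ge h_0$. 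Since $-\alpha c<h_0$, it follows that $H(b_nv_n)-H(a_nv_n)\ge h_0+\alpha c>0$ for every $n$. Thus the proof reduces to establishing that $H(b_nv_n)-H(a_nv_n)\to 0$, which will be the sought contradiction.

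To prove this I would use the decomposition $H=P+Q$. For the $P$ term, formula \eqref{Pexpression} together with the homogeneity of $\|\cdot\|^\beta$ and of $A$ yields
$$P(b_nv_n)-P(a_nv_n)=\frac{\beta-\alpha}{\beta}\bigl(b_n^\beta-a_n^\beta\bigr)+(\beta'-\alpha)\bigl(b_n^{\beta'}-a_n^{\beta'}\bigr)A(v_n).$$
Since $(a_n)$ and $(b_n)$ lie in a fixed bounded interval $[0,R]$ and $b_n-a_n\to 0$, the uniform continuity of $t\mapsto t^\beta$ and $t\mapsto t^{\beta'}$ on $[0,R]$ gives $b_n^\beta-a_n^\beta\to 0$ and $b_n^{\beta'}-a_n^{\beta'}\to 0$; and $(A(v_n))$ is bounded because $A$ is bounded on bounded sets. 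Hence $P(b_nv_n)-P(a_nv_n)\to 0$. For the $Q$ term I would use that $Q$ is weakly sequentially continuous: if $w_n\rightharpoonup w$ in $X$, then $K'(w_n)\to K'(w)$ in $X^*$ by the complete continuity of $K'$, which in turn implies $K(w_n)\to K(w)$ and $K'(w_n)w_n\to K'(w)w$, so that $Q(w_n)=\alpha K(w_n)-K'(w_n)w_n\to Q(w)$. Now let $(n_k)$ be any subsequence; by reflexivity of $X$ and the boundedness of $(b_nv_n)$ we may pass to a further subsequence along which $b_{n_k}v_{n_k}\rightharpoonup\ell$, and since $b_{n_k}v_{n_k}-a_{n_k}v_{n_k}=(b_{n_k}-a_{n_k})v_{n_k}\to 0$ strongly, also $a_{n_k}v_{n_k}\rightharpoonup\ell$. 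Hence $Q(b_{n_k}v_{n_k})-Q(a_{n_k}v_{n_k})\to Q(\ell)-Q(\ell)=0$. As the subsequence was arbitrary, $Q(b_nv_n)-Q(a_nv_n)\to 0$, and combining with the $P$ estimate we obtain $H(b_nv_n)-H(a_nv_n)\to 0$, contradicting the inequality above.

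The main obstacle is that $\mathcal{S}$ is not compact, so one cannot simply invoke a uniform continuity property of $H$ on bounded sets to pass from $b_n-a_n\to 0$ to $H(b_nv_n)-H(a_nv_n)\to 0$. This is exactly what forces the splitting $H=P+Q$: the part $P$ is handled explicitly using its homogeneous structure, while the only genuinely delicate piece, $Q$, is controlled through the weak continuity coming from the complete continuity of $K'$. A further point to keep in mind is that the boundedness of $(t_c^+(v_n))$, hence of $(s(v_n))$, relies on Lemma \ref{k0}, and thus ultimately on the smallness condition $0<-\alpha c<h_0$; it is precisely this that makes a uniform gap $s(v)-t_c^+(v)\ge\varepsilon$ plausible.
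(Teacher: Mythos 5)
Your proof is correct and follows essentially the same route as the paper: argue by contradiction, use the decomposition $H=P+Q$ with the explicit formula \eqref{Pexpression} and boundedness of $A$ to control the $P$-difference, use the weak sequential continuity of $Q$ (coming from the complete continuity of $K'$) together with the boundedness of $t_c^+$ from Lemma \ref{k0} to kill the $Q$-difference, and contradict $H(t_c^+(v_n)v_n)=-\alpha c<h_0\le H(s(v_n)v_n)$ as in \eqref{contradineq}. The only differences are cosmetic: you make the subsequence/weak-limit step for $Q$ explicit and avoid needing $t_c^+(v_n)$ bounded away from zero, which the paper invokes but does not actually require.
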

\begin{proof} It is clear from \eqref{Pexpression} that given $[a,b]\subset (0,\infty)$ and $\varepsilon>0$, there exists $\delta>0$ such that $|P(tv)-P(sv)|<\varepsilon$ for all $t,s\in [a,b]$ with $|t-s|<\delta$, and $v\in \mathcal{S}$.
	
Now	suppose on the contrary that there exists a sequence $(v_n) \subset \mathcal{S}$ such that $s(v_n)-t_c^+(v_n)\to 0$. Since $t_c^+(v_n)$ is bounded  (away from zero and from above), we can assume that  $t_c^+(v_n),s(v_n)\to t>0$.  We can also suppose that $v_n\rightharpoonup v$ in $X$. It follows that $|P(t_c^+(v_n)v_n)-P(s(v_n)v_n)|\to 0$ and $|Q(t_c^+(v_n)v_n)-Q(s(v_n)v_n)|\to 0$, therefore $|H(t_c^+(v_n)v_n)-H(s(v_n)v_n)|\to 0$, which is a contradiction since

	\begin{equation}\label{contradineq}
	H(t_c^+(v_n)v_n)=-\alpha c<h_0\le 	H(s(v_n)v_n), \quad \forall n\in \mathbb{N}.
	\end{equation}
	
\end{proof}
\begin{lemma}\label{k3} Under the above conditions assume moreover that 
	\begin{enumerate}
		\item[(A)] For any $\tilde{A}\ge 0$ and $v\in \mathcal{S}$, the equation $[\beta-\alpha]t^{\beta-1}+\beta'(\beta'-\alpha)t^{\beta'-1}\tilde{A}+Q'(tv)v=0$ has a unique positive solution with respect to $t$.
	\end{enumerate}
	If $-\alpha c<h_0$ and $\varepsilon>0$ is such that $t_c^+(v)+2\varepsilon<s(v)$, then there exists $d>0$ such that $H'(tv)v>d$ for all $v\in S$ and $t\in [t_c^+(v),t_c^+(v)+\varepsilon]$.
\end{lemma}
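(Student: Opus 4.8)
The plan is to argue by contradiction, after first reducing the statement to a uniform lower bound along sequences. I would begin by recording that, writing $H=P+Q$ and using \eqref{Pexpression} together with the $\beta'$-homogeneity of $A$, one has for every $v\in S$ and $t>0$
\[
H'(tv)v=(\beta-\alpha)t^{\beta-1}+\beta'(\beta'-\alpha)A(v)\,t^{\beta'-1}+Q'(tv)v ,
\]
so that condition (A) says precisely that, for each $v$, the left-hand side vanishes at a single positive $t$. Since by (F1) the map $t\mapsto H(tv)$ is increasing on $(0,s(v))$ and $s(v)$ is one of its critical points, (A) forces $s(v)$ to be that unique zero, whence $H'(tv)v>0$ on $(0,s(v))$. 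Because $t_c^+(v)+\varepsilon<s(v)$, the interval $[t_c^+(v),t_c^+(v)+\varepsilon]$ lies in $(0,s(v))$ and $H'(tv)v>0$ there; so it suffices to show this quantity is bounded away from $0$ uniformly in $v\in S$, and I would suppose, for contradiction, that there are $(v_n)\subset S$ and $t_n\in[t_c^+(v_n),t_c^+(v_n)+\varepsilon]$ with $H'(t_nv_n)v_n\to0$.

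Next I would extract limits. By Proposition \ref{p2} and Lemma \ref{k0}(1) the sequence $(t_c^+(v_n))$, hence $(t_n)$, is bounded away from $0$ and from above, so along a subsequence $t_n\to t_0>0$; by reflexivity of $X$, along a further subsequence $v_n\rightharpoonup v$; and $A(v_n)\to a\ge0$ since $A\ge0$ is bounded on bounded sets. Passing to the limit in the displayed identity for $H'(t_nv_n)v_n$ — the $P$-part converging because $t_n\to t_0$ and $A(v_n)\to a$, the $Q$-part because $t_nv_n\rightharpoonup t_0v$ and $Q'$ is completely continuous (which I would deduce from the complete continuity of $K'$) — yields $(\beta-\alpha)t_0^{\beta-1}+\beta'(\beta'-\alpha)a\,t_0^{\beta'-1}+Q'(t_0v)v=0$. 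If $v=0$ the left side is strictly positive (as $t_0>0$), a contradiction; hence $v\neq0$, so $\{v_n:n\in\mathbb{N}\}\cup\{v\}$ is a weakly compact subset of $X\setminus\{0\}$, and by (F1) the levels $H(tv_n)$ fall below $-\alpha c$ for $t$ beyond some fixed $T$, uniformly in $n$. Since $s(v_n)<t_c^-(v_n)\le T$, the sequence $(s(v_n))$ is bounded; along a subsequence $s(v_n)\to s_*$, and $t_c^+(v_n)+2\varepsilon<s(v_n)$ gives $s_*\ge t_0+\varepsilon>t_0$. Running the same limiting procedure on the identity $H'(s(v_n)v_n)v_n=0$ (valid because $s(v_n)$ maximises $t\mapsto H(tv_n)$) shows that $s_*$ solves the same equation as $t_0$; but by (A), applied with the parameter $\tilde A=a$, this equation has a unique positive root, forcing $t_0=s_*$ and contradicting $s_*>t_0$.

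The main obstacle is exactly this last passage to the limit and the way (A) enters it. The delicate feature is that $A(v_n)$ need not converge to $A(v)$ — $A$ is not assumed weakly continuous — so the limiting equation is not the one attached to $v$ itself but the one in which $A(v)$ has been replaced by $a=\lim A(v_n)$; this is precisely why (A) is formulated with a free parameter $\tilde A\ge0$ rather than with $\tilde A=A(v)$. Two further technical points would need care: the complete continuity of $Q'$ (which follows from (F2) and is transparent in the applications, where $Q$ is a lower-order term of subcritical growth), and the possibility that the weak limit $v$ satisfies $\|v\|<1$, so that (A), stated on $S$, does not apply verbatim; the latter I would settle either by first proving $v_n\to v$ strongly (a lower-semicontinuity argument in the spirit of Lemma \ref{k0}(2)) or by a rescaling of (A), and in any case it vanishes in the two applications of the lemma, where $A\equiv0$.
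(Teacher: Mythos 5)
Your proof is correct and follows essentially the same route as the paper's: argue by contradiction, extract limits $t_n\to t_0>0$, $v_n\rightharpoonup v$, $A(v_n)\to\tilde A\ge 0$, exclude $v=0$ using the strictly positive $P$-term together with the complete continuity of $Q'$, then pass to the limit in both $H'(t_nv_n)v_n\to 0$ and $H'(s(v_n)v_n)v_n=0$ and invoke (A) with the parameter $\tilde A=\lim A(v_n)$ to force the two roots to coincide, contradicting the $\varepsilon$-gap $s(v_n)\ge t_n+\varepsilon$. Your closing observation that the weak limit $v$ may fail to lie on $\mathcal{S}$ (so (A) as literally stated does not apply verbatim) points to a genuine imprecision that is also present in the paper's own proof, and your proposed remedies (a strong-convergence argument or a rescaled form of (A), both moot in the applications) are adequate.
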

\begin{proof} Indeed, it is clear that $H'(tv)v>0$ for all $v\in \mathcal{S}$ and $t\in [t_c^+(v),t_c^+(v)+\varepsilon]$. Suppose on the contrary that we can find a sequence $(v_n)\subset \mathcal{S}$ and $(t_n)\subset [t_c^+(v_n),t_c^+(v_n)+\varepsilon]$ such that $H'(t_nv_n)v_n\to 0$. By Lemma \ref{k0} and Proposition \ref{p2} we know that $(t_n)$ is bounded and away from zero. So we can assume that $t_n\to t>0$. We can also assume that $v_n \rightharpoonup v$ in $X$. If $v=0$ then
	\begin{equation*}
	0=(\beta-\alpha)t^{\beta-1}\|v\|^\beta \le \liminf (\beta-\alpha)t_n^{\beta-1}\|v_n\|^\beta \le\liminf  P'(t_nv_n)v_n=\liminf -Q'(t_nv_n)v_n=0,
	\end{equation*}
	which gives a contradiction. Thus $v \neq 0$. Now observe that $H'(s_nv_n)v_n$=0 for all $n$, where $s_n=s(v_n)$, and since $v\neq 0$ it follows by (F1) that $s_n\to s>0$. Since $A(v_n)$ is bounded we have that $A(v_n)\to \tilde{A}\ge 0$ Therefore we obtain
	\begin{equation*}
			[\beta-\alpha]t^{\beta-1}+\beta'(\beta'-\alpha)t^{\beta'-1}\tilde{A}+Q'(tv)v=0=	[\beta-\alpha]s^{\beta-1}+\beta'(\beta'-\alpha)s^{\beta'-1}\tilde{A}+Q'(sv)v,
	\end{equation*}
	which implies, by $(A)$, that $t=s$. However this contradicts Lemma \ref{k1} and the proof is complete.
\end{proof}
\begin{proposition}\label{k4} Assume the conditions of Lemma \ref{k3}. If $-\alpha c<h_0$ then there exists $\delta>0$ such that $\lambda_c(t_c^+(v)v)+\delta<\lambda_c(s(v)v)$ for any $v\in \mathcal{S}$.
\end{proposition}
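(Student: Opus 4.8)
The plan is to represent $\lambda_c(s(v)v)-\lambda_c(t_c^+(v)v)$ as the integral of $\frac{d}{dt}\lambda_c(tv)$ over $[t_c^+(v),s(v)]$ and to bound this integral below by a constant independent of $v\in\mathcal{S}$. Recall that for fixed $v\in\mathcal{S}$ the map $t\mapsto\lambda_c(tv)$ is $C^1$ on $(0,\infty)$ with $\frac{d}{dt}\lambda_c(tv)=\frac{H(tv)+\alpha c}{t^{\alpha+1}I_2(v)}$, and that $H(t_c^+(v)v)=-\alpha c$. Since by $(F1)$ the map $t\mapsto H(tv)$ is increasing on $(0,s(v))$, the numerator $H(tv)+\alpha c$ is nonnegative on $[t_c^+(v),s(v)]$, so for any $\varepsilon\in(0,s(v)-t_c^+(v))$,
\[
\lambda_c(s(v)v)-\lambda_c(t_c^+(v)v)=\frac{1}{I_2(v)}\int_{t_c^+(v)}^{s(v)}\frac{H(tv)+\alpha c}{t^{\alpha+1}}\,dt\ \ge\ \frac{1}{I_2(v)}\int_{t_c^+(v)}^{t_c^+(v)+\varepsilon}\frac{H(tv)+\alpha c}{t^{\alpha+1}}\,dt .
\]

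The key step is to fix $\varepsilon$ and harvest the uniform constants from Lemmas \ref{k0}, \ref{k1} and \ref{k3}. By Lemma \ref{k1} there is $\varepsilon_0>0$ with $s(v)-t_c^+(v)\ge\varepsilon_0$ for every $v\in\mathcal{S}$; set $\varepsilon:=\varepsilon_0/3$, so that $t_c^+(v)+2\varepsilon<s(v)$ for all $v$, and Lemma \ref{k3} yields $d>0$ such that $H'(\tau v)v>d$ for all $v\in\mathcal{S}$ and $\tau\in[t_c^+(v),t_c^+(v)+\varepsilon]$. Integrating $\frac{d}{d\tau}H(\tau v)=H'(\tau v)v>d$ and using $H(t_c^+(v)v)=-\alpha c$ gives $H(tv)+\alpha c\ge d\,(t-t_c^+(v))$ on $[t_c^+(v),t_c^+(v)+\varepsilon]$. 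Moreover, by Lemma \ref{k0}(1) the set $\mathcal{N}^+(\lambda_c)$ is bounded, so $b:=\sup_{v\in\mathcal{S}}t_c^+(v)<\infty$ and $t\le b+\varepsilon$ throughout the last integral. Hence
\[
\int_{t_c^+(v)}^{t_c^+(v)+\varepsilon}\frac{H(tv)+\alpha c}{t^{\alpha+1}}\,dt\ \ge\ \frac{d}{(b+\varepsilon)^{\alpha+1}}\int_{0}^{\varepsilon}\sigma\,d\sigma\ =\ \frac{d\,\varepsilon^{2}}{2(b+\varepsilon)^{\alpha+1}} .
\]

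The only remaining ingredient is an upper bound for $I_2$ on $\mathcal{S}$, and this is where some care is needed, as $\mathcal{S}$ is not compact. However $M:=\sup_{v\in\mathcal{S}}I_2(v)<\infty$: if $(v_n)\subset\mathcal{S}$ had $I_2(v_n)\to\infty$ then, $X$ being reflexive, a subsequence would satisfy $v_n\rightharpoonup v$, and since $I_2'$ is completely continuous the functional $I_2$ is sequentially weakly continuous, so $I_2(v_n)\to I_2(v)<\infty$, a contradiction. (Alternatively one may organize the whole argument by contradiction, in the style of Lemmas \ref{k1} and \ref{k3}, assuming $\lambda_c(s(v_n)v_n)-\lambda_c(t_c^+(v_n)v_n)\to0$ along some $(v_n)\subset\mathcal{S}$ and extracting $v_n\rightharpoonup v$ to get $\sup_n I_2(v_n)<\infty$.) Combining the two displays with $I_2(v)\le M$ gives
\[
\lambda_c(s(v)v)-\lambda_c(t_c^+(v)v)\ \ge\ \frac{d\,\varepsilon^{2}}{2M(b+\varepsilon)^{\alpha+1}}\ =:\ \delta\ >\ 0\qquad\text{for all }v\in\mathcal{S},
\]
which is the assertion. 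The substantive content is in Lemmas \ref{k0}, \ref{k1} and \ref{k3}; granting those, the only genuine obstacle here is the control of the denominator $I_2(v)$, supplied by the weak continuity of $I_2$.
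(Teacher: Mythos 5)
Your proof is correct and follows essentially the same route as the paper's: write $\lambda_c(s(v)v)-\lambda_c(t_c^+(v)v)$ as the integral of $\frac{H(tv)+\alpha c}{t^{\alpha+1}I_2(v)}$, restrict to $[t_c^+(v),t_c^+(v)+\varepsilon]$ using Lemma \ref{k1}, and bound $H(tv)+\alpha c\ge d\,(t-t_c^+(v))$ via Lemma \ref{k3}. The only difference is that you make explicit the uniform upper bound on the denominator $t^{\alpha+1}I_2(v)$ (boundedness of $t_c^+$ from Lemma \ref{k0} and weak sequential continuity of $I_2$), which the paper absorbs without comment into the constant $D$; this is a welcome clarification rather than a deviation.
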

\begin{proof} 	Note that for any $v\in \mathcal{S}$ we have
	\begin{equation*}
	\lambda_c(s(v)v)-\lambda_c(t_c^+(v)v)=\int_{t_c^+(v)}^{s(v)}\frac{H(tv)+\alpha c}{t^{\alpha+1}I_2(v)}\, dt, 
	\end{equation*}
	and
	\begin{equation*}
	H(tv)+\alpha c=	\int_{t_c^+(v)}^tH'(sv)v \, ds.
	\end{equation*}
	By Lemmas \ref{k1} and \ref{k3} we can find $\varepsilon>0$ such that $t_c^+(v)+\varepsilon<s(v)$ and $H'(tv)v>d>0$ for all $t\in [t_c^+(v),t_c^+(v)+\varepsilon]$. Thus
	$$\lambda_c(s(v)v)-\lambda_c(t_c^+(v)v)>\int_{t_c^+(v)}^{t_c^+(v)+\varepsilon}\frac{H(tv)+\alpha c}{t^{\alpha+1}I_2(v)}\, dt 
		> D\int_{t_c^+(v)}^{t_c^+(v)+\varepsilon}(t-t_c^+(v))dt 
		= D \frac{\varepsilon^2}{2}.$$
\end{proof}

As a consequence of the previous results we have the following inequalities, where $\displaystyle
\lambda_{n,c}:=\inf_{F\in \mathcal{F}_n}\sup_{v\in F}\lambda_c(s(v)v)$:
\begin{theorem} Under the above conditions there holds $\lambda_{n,c}^+<\lambda_{n,c}\le \lambda_{n,c}^-$ for every $n$ and $c<0$ such that $0<-\alpha  c<h_0$. 
\end{theorem}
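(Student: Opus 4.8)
The plan is to derive the theorem from Proposition \ref{k4} together with an elementary monotonicity property of the fibering maps $t\mapsto\lambda_c(tv)$, by transporting both facts through the minimax formulas defining $\lambda_{n,c}^\pm$ and $\lambda_{n,c}$.

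\emph{Step 1: the inequality $\lambda_{n,c}\le\lambda_{n,c}^-$.} Recall that $\frac{d}{dt}\lambda_c(tv)=\frac{H(tv)+\alpha c}{t^{\alpha+1}I_2(v)}$ for $v\in\mathcal{S}$ and $t>0$. Under (F1) the map $t\mapsto H(tv)$ increases on $(0,s(v))$ and decreases on $(s(v),\infty)$, while $H(t_c^\pm(v)v)=-\alpha c$ and $H(s(v)v)\ge h_0>-\alpha c$; hence $t_c^+(v)<s(v)<t_c^-(v)$ and $H(tv)>-\alpha c$ for every $t\in(t_c^+(v),t_c^-(v))$. Since $I_2(v)>0$, this shows that $t\mapsto\lambda_c(tv)$ is strictly increasing on $[t_c^+(v),t_c^-(v)]$, so in particular $\lambda_c(s(v)v)\le\lambda_c(t_c^-(v)v)$ for every $v\in\mathcal{S}$. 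Taking the supremum over $v\in F$ and then the infimum over $F\in\mathcal{F}_n$ yields $\lambda_{n,c}\le\lambda_{n,c}^-$.

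\emph{Step 2: the inequality $\lambda_{n,c}^+<\lambda_{n,c}$.} By Proposition \ref{k4} there exists $\delta>0$, independent of $v$, such that $\lambda_c(t_c^+(v)v)\le\lambda_c(s(v)v)-\delta$ for every $v\in\mathcal{S}$. Fixing $F\in\mathcal{F}_n$ and taking the supremum over $v\in F$ gives $\sup_{v\in F}\lambda_c(t_c^+(v)v)\le\sup_{v\in F}\lambda_c(s(v)v)-\delta$, and passing to the infimum over $F\in\mathcal{F}_n$ gives $\lambda_{n,c}^+\le\lambda_{n,c}-\delta$. As $\lambda_{n,c}\ge\lambda_{1,c}^+>-\infty$ (indeed $\lambda_c(s(v)v)\ge\lambda_c(t_c^+(v)v)$ and $\lambda_c$ is bounded below on $\mathcal{N}^+(\lambda_c)$ by Proposition \ref{p3}), the strict inequality $\lambda_{n,c}^+<\lambda_{n,c}$ follows.

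\emph{On the main difficulty.} Once Proposition \ref{k4} is available the argument is purely formal, so there is no genuine obstacle at this level; the whole content of the statement is concentrated upstream in the uniform gap $\delta$ of Proposition \ref{k4}, which itself relies on the boundedness of $\mathcal{N}^+(\lambda_c)$ (Lemma \ref{k0}), the uniform separation $s(v)-t_c^+(v)\ge\varepsilon$ (Lemma \ref{k1}), and the uniform positivity of $H'(tv)v$ for $t$ near $t_c^+(v)$ (Lemma \ref{k3}). The only minor point to keep in mind when assembling the proof is that the three minimax levels must be finite in order for the strict inequality to be meaningful; this is guaranteed by $(H2)^\pm$ through Theorem \ref{c1} and by Proposition \ref{p3}.
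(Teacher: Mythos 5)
Your proof is correct and follows essentially the same route the paper intends: the theorem is stated there as an immediate consequence of Proposition \ref{k4} (the uniform gap $\delta$) combined with the monotonicity of $t\mapsto\lambda_c(tv)$ on $[t_c^+(v),t_c^-(v)]$, which is exactly your Steps 2 and 1 transported through the minimax formulas. Your added remarks on finiteness of the levels are a harmless elaboration of what the paper leaves implicit.
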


\subsection{Applications of Corollary \ref{cc1} to some boundary value problems}

First we consider the problem \begin{equation}\label{cc}
\begin{cases}
-\Delta u=\lambda |u|^{q-2}u +f(u) &\mbox{ in } \Omega,\\
u=0 &\mbox{ on } \partial \Omega,
\end{cases}
\end{equation}
with $1<q<2$ and $f\in C^1(\mathbb{R})$ being subcritical and superlinear, in which case the problem has a concave-convex nature.

We have  $I_1(u)=\frac{1}{2}\int_\Omega |\nabla u|^2-\int_\Omega F(u)$ and $I_2(u)=\frac{1}{q}\int_\Omega |u|^q$ for $u \in H_0^1(\Omega)$, so that
\begin{equation}\label{lex}
\lambda_c(u)=\frac{\frac{q}{2}\int_\Omega |\nabla u|^2-q\int_\Omega F(u)-qc}{\int_\Omega |u|^q},
\end{equation}
for $u \in H_0^1(\Omega) \setminus \{0\}$.
Thus $I_2$ is now $q$-homogeneous, and
$$H(u)=\frac{2-q}{2}\int_\Omega |\nabla u|^2+\int_\Omega \left(qF(u)-f(u)u\right).$$

We shall assume that 

\begin{enumerate}
	\item[(f1)] 	$|f(s)| \leq C(1+|s|^{r-1})$ for some $C>0$ and $1<r<2^*$, and every $s \in \mathbb{R}$.
	\item [(f2)] $t\mapsto \frac{(q-1)f(t)}{t}-f'(t)$ is decreasing in $(0,\infty)$ and increasing in $(-\infty,0)$, and $$\displaystyle \lim_{|t|\to \infty} \left[\frac{(q-1)f(t)}{t}-f'(t)\right]=-\infty.$$
\end{enumerate} 

These conditions imply that $H$ has a positive ground state level $h_0>0$,  given by \eqref{cal}.
\begin{corollary}\label{ap1}
	Let   $1<q<2$, and $f\in C^1(\mathbb{R})$ with $f'(0)=f(0)=0$ and satisfying (f1) and (f2). Then for every $c<0$ such that $-\alpha c<h_0$ the following assertions hold:
	\begin{enumerate}
		\item The problem \eqref{cc} has a  nontrivial solution $ u_{1,c}$ having energy $c$ for $\lambda=\lambda_{1,c}^+$, and has no such solution for $\lambda<\lambda_{1,c}^+$.
		\item If in addition $f$ is odd, then we can choose $u_{1,c}$ to be nonnegative, and for every $n\in \mathbb{N}$ the problem \eqref{cc} has a pair of nontrivial solutions $\pm u_{n,c}$ having energy $c$ for $\lambda=\lambda_{n,c}^+$ and a pair of nontrivial solutions $\pm v_{n,c}$ having energy $c$ for $\lambda=\lambda_{n,c}^-$. Moreover:
		\begin{enumerate}
\item $\lambda_{n,c}^+<\lambda_{n,c}^-$ for every $n$, and $\lambda_{n,c}^{\pm} \nearrow \infty$ as $n \to \infty$.
\item $u_{n,c} \rightharpoonup 0$ with $u_{n,c} \not \to 0$ in $X$, and $\|v_{n,c}\| \to \infty$ as $n \to \infty$.
\item $\sup_n \|u_{n,c}\| \to 0$ as $c \to 0^-$.
		\end{enumerate}
		  
	\end{enumerate}
\end{corollary}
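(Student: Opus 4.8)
The plan is to obtain Corollary~\ref{ap1} from Theorem~\ref{c1}, Corollary~\ref{cc1} and the results of Subsection~\ref{ineq}, applied with $X=H_0^1(\Omega)$ (norm $\|u\|=(\int_\Omega|\nabla u|^2)^{1/2}$), $I_1(u)=\frac{1}{2}\|u\|^2-\int_\Omega F(u)$ and $I_2(u)=\frac{1}{q}\int_\Omega|u|^q$, so that $\alpha=q\in(1,2)$. First I would record the immediate hypotheses: $I_2$ is $q$-homogeneous and positive on $X\setminus\{0\}$, $I_1,I_2\in C^1(X)$ with $I_1(0)=I_2(0)=0$, $I_2'$ is completely continuous by the compact embedding $H_0^1(\Omega)\hookrightarrow L^q(\Omega)$, and $I_1,I_2$ are even when $f$ is odd. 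For $(F2)$ I would take $J(u)=\frac{1}{2}\|u\|^2$ and $K(u)=\int_\Omega F(u)$: then $K'$ is completely continuous by $(f1)$ and the compact Sobolev embedding, $J(u)=\frac{1}{2}\|u\|^2\ge C_1\|u\|^\beta$ with $\beta=2\ge q=\alpha$, and $(J'(u)-J'(v))(u-v)=\|u-v\|^2\ge(\|u\|-\|v\|)^2$ by the reverse triangle inequality, i.e.\ $(F2)$ holds with $\eta=2$.

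The substantial point is $(F1)$. Setting $g(s):=\frac{(q-1)f(s)}{s}-f'(s)$ for $s\neq0$ and $g(0):=0$ (a continuous function, since $f(0)=f'(0)=0$), a direct computation gives
\[
\frac{d}{dt}H(tu)=t\Big[(2-q)\|u\|^2+\int_\Omega u^2\,g(tu)\Big],\qquad t>0 .
\]
By $(f2)$ one has $g\le g(0)=0$, and $t\mapsto g(tu(x))$ is nonincreasing for a.e.\ $x$ (because $g$ decreases on $(0,\infty)$ and increases on $(-\infty,0)$); hence $t\mapsto(2-q)\|u\|^2+\int_\Omega u^2 g(tu)$ decreases strictly from $(2-q)\|u\|^2>0$ at $t\to0^+$ to $-\infty$ (using $g(s)\to-\infty$ and monotone convergence). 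This yields the first half of $(F1)$: $t\mapsto H(tu)$ increases on $(0,s(u))$ and decreases on $(s(u),\infty)$. The uniform decay $\lim_{t\to\infty}H(tu)=-\infty$ on weakly compact subsets of $X\setminus\{0\}$, and the positivity of the ground state level $h_0$ in \eqref{cal}, follow from $(f1)$–$(f2)$ exactly as in \cite[proof of Corollary~4.5]{LRQS}. Since $0<-qc<h_0$, Proposition~\ref{p2} applies, and Theorem~\ref{c1} together with Corollary~\ref{cc1} already delivers assertion (1), the existence of the pairs $\pm u_{n,c}\subset\mathcal{N}^+(\lambda_c)$ and $\pm v_{n,c}\subset\mathcal{N}^-(\lambda_c)$ at level $c$ for the stated values of $\lambda$, the inequalities $\lambda_{1,c}^+<\lambda_{1,c}^-$ and $\lambda_{n,c}^+\le\lambda_{n,c}^-$, the limit $\lambda_{n,c}^\pm\nearrow\infty$, and (by the remark after Corollary~\ref{cc1}, since $I_1,I_2$ are even) that $u_{1,c}$ may be taken nonnegative.

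To upgrade $\lambda_{n,c}^+\le\lambda_{n,c}^-$ to the strict inequality in 2(a) I would verify the extra hypotheses of Subsection~\ref{ineq}. With $J(u)=\frac{1}{2}\|u\|^2$ one has $A\equiv0$, so $\beta=2$ and $A$ is non-negative, bounded on bounded sets and homogeneous of any degree $\beta'\ge q$; moreover $H\in C^1(X)$ and $K\in C^2(X)$, since $f\in C^1(\mathbb{R})$ and $(f1)$–$(f2)$ force $|f'(s)|\le C(1+|s|^{r-2})$, so that the relevant Nemytskii maps are of class $C^1$. The bound $Q(u)=\int_\Omega(qF(u)-f(u)u)\ge-C\|u\|^p$ for some $p>2$—or, more precisely, the near-zero decay of $Q$ that is actually used in Lemma~\ref{k0}(1)—follows from $(f1)$ and $f(0)=f'(0)=0$. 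Finally condition (A) of Lemma~\ref{k3} reduces, with $\beta=2$ and $\tilde A=0$, to the equation $(2-q)+\int_\Omega v^2 g(tv)=0$ having a unique positive root in $t$, which is precisely the strict monotonicity established above (the root being $s(v)$). Hence the theorem at the end of Subsection~\ref{ineq} applies and gives $\lambda_{n,c}^+<\lambda_{n,c}\le\lambda_{n,c}^-$ for all $n$.

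It remains to obtain the convergences 2(b)–2(c). Since $u_{n,c}\in\mathcal{N}^+(\lambda_c)$ for every $n$, Lemma~\ref{k0}(1) gives $\sup_n\|u_{n,c}\|\le\sup_{u\in\mathcal{N}^+(\lambda_c)}\|u\|\to0$ as $c\to0^-$, and $u_{n,c}\not\to 0$ in $X$ because $\mathcal{N}(\lambda_c)$ is bounded away from $0$ (Proposition~\ref{p2}). The sequence $(u_{n,c})$ is bounded by Lemma~\ref{k0}(1); if some subsequence converged weakly to $u\neq0$, then $I_2(u_{n_k,c})\to I_2(u)>0$ by compactness while $I_1(u_{n_k,c})$ would stay bounded, so $\lambda_c(u_{n_k,c})=\lambda_{n_k,c}^+$ would be bounded, contradicting $\lambda_{n,c}^+\nearrow\infty$; hence $u_{n,c}\rightharpoonup 0$. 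Running the same argument on $(v_{n,c})\subset\mathcal{N}^-(\lambda_c)$, a bounded subsequence can have neither a nonzero weak limit (same contradiction, now with $\lambda_{n,c}^-\nearrow\infty$) nor the weak limit $0$ (by Lemma~\ref{k0}(2)), so $\|v_{n,c}\|\to\infty$. I expect the main obstacle to be the verification of $(F1)$—the fibering analysis via $g$ and $(f2)$, together with the uniform decay of $H$ on weakly compact sets and the positivity of $h_0$, which are the technical heart and are taken over from \cite{LRQS}—while the lower bound on $Q$ is a secondary, essentially routine point.
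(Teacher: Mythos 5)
Your proposal is correct and follows essentially the same route as the paper: apply Corollary \ref{cc1} with $J(u)=\frac12\int_\Omega|\nabla u|^2$, $K(u)=\int_\Omega F(u)$, $I_2(u)=\frac1q\|u\|_q^q$, verify (F1)–(F2) (the paper defers this to the proof of \cite[Corollary 4.5]{LRQS}), check the hypotheses of Subsection \ref{ineq} to get $\lambda_{n,c}^+<\lambda_{n,c}^-$, and derive 2(b)–2(c) from Lemma \ref{k0} and Proposition \ref{p2} exactly as in the paper's argument via the divergence of $\lambda_c(u_{n,c})$, $\lambda_c(v_{n,c})$. The only difference is that you spell out details the paper leaves implicit (the fibering computation for (F1), condition (A), and the near-zero bound on $Q$, where your $\varepsilon$-type hedge is indeed the right reading), and these verifications are sound.
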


\begin{proof}
	It is a consequence of Corollary \ref{cc1} with $I_1=J-K$ with $J(u):=\frac{1}{2}\int_\Omega |\nabla u|^2$ and $K(u):=\int_\Omega F(u)$, and $I_2(u)=\frac{1}{q} \|u\|_q^q$ for $u \in H_0^1(\Omega)$. Following the arguments in the proof of \cite[Corollary 4.5]{LRQS}  one can easily check that (F1) and (F2) are satisfied, and $h_0>0$. One can also check that the conditions of Subsection 2.2 are satisfied. Note that $I_1$ is bounded on bounded sets, and since $\lambda_c(u_{n,c}), \lambda_c(v_{n,c}) \to \infty$ we have either $w_n \rightharpoonup 0$ in $X$ or $\|w_n\| \to \infty$ for $w_n=u_{n,c}$ and $w_n=v_{n,c}$. Lemma \ref{k0} yields that $u_{n,c} \rightharpoonup 0$ in $X$ and $\|v_{n,c} \|\to \infty$, whereas Proposition \ref{p2} shows that $u_{n,c} \not \to 0$ in $X$.	
\end{proof}

Corollary \ref{ap1} extends most results of \cite[Theorem 2.3]{RSiS}, which holds for the $p$-Laplacian version of \eqref{cc}, to a nonpowerlike $f$. It is not difficult to see that Corollary \ref{ap1} can be easily extended to the $p$-Laplacian setting, in the same spirit of \cite[Remark 4.4]{LRQS}.

As a second application of Corollary \ref{cc1} we deal with the problem
\begin{equation}\label{ccpq}
\begin{cases}
-\Delta_p u - \Delta_q u=\lambda |u|^{r_1-2}u + |u|^{r_2-2}u&\mbox{ in } \Omega,\\
u=0 &\mbox{ on } \partial \Omega,
\end{cases}
\end{equation}
where $1<r_1<q<p<r_2<p^*$. This problem can be seen as an extension of \eqref{cc}  (with $f(u)=|u|^{r_2-2}u$), as the Laplacian operator is replaced by the $(p,q)$-Laplacian operator $-\Delta_p-\Delta_q$.

We have now $I_1(u)=\frac{1}{p}\int_\Omega |\nabla u|^p+\frac{1}{q}\int_\Omega |\nabla u|^q-\frac{1}{r_2}\int_\Omega |u|^{r_2}$ and $I_2(u)=\frac{1}{r_1}\int_\Omega |u|^{r_1}$ for $u \in W_0^{1,p}(\Omega)$, and
\begin{equation}
\lambda_c(u)=\frac{\frac{r_1}{p}\int_\Omega |\nabla u|^p+\frac{r_1}{q}\int_\Omega |\nabla u|^q-\frac{r_1}{r_2}\int_\Omega |u|^{r_2}-cr_1}{\int_\Omega |u|^{r_1}},
\end{equation}
for $u \in W_0^{1,p}(\Omega) \setminus \{0\}$.
Thus $I_2$ is now $r_1$-homogeneous, and
$$H(u)=\frac{p-r_1}{p}\int_\Omega |\nabla u|^p+\frac{q-r_1}{q}\int_\Omega |\nabla u|^q-\frac{r_2-r_1}{r_2}\int_\Omega |u|^{r_2}.$$
Since $q<p<r_2$ we see that (F1) is satisfied and that $H$ has a positive ground state level, i.e. $h_0>0$. Note also that $I_1=J-K$ with $J(u)=\frac{1}{p}\|u\|^p+\frac{1}{q}\int_\Omega |\nabla u|^q$ and $K(u)=\frac{1}{r_2}\|u\|_{r_2}^{r_2}$, which satisfy (F2) and the conditions of Subsection 2.2. Therefore we deduce the following result:

\begin{corollary}
Let $1<r_1<q<p<r_2<p^*$ and $c<0$ be such that $-\alpha c<h_0$. Then the conclusions of Corollary \ref{ap1} apply to \eqref{ccpq}.
\end{corollary}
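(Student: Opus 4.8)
The plan is to verify that the functionals attached to \eqref{ccpq}, namely $I_1(u)=\frac1p\|\nabla u\|_p^p+\frac1q\|\nabla u\|_q^q-\frac1{r_2}\|u\|_{r_2}^{r_2}$ and $I_2(u)=\frac1{r_1}\|u\|_{r_1}^{r_1}$ on $X=W_0^{1,p}(\Omega)$, satisfy the hypotheses of Corollary~\ref{cc1} \emph{and} the additional conditions of Subsection~\ref{ineq}; once this is in place the assertion is obtained by exactly the argument that proves Corollary~\ref{ap1}, feeding Corollary~\ref{cc1}, Propositions~\ref{p2}--\ref{p3}, Lemma~\ref{k0} and the final theorem of Subsection~\ref{ineq} into the same machine.

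Step one is the ``core'' hypotheses of Theorem~\ref{c1}. That $I_1,I_2\in C^1(X)$, $I_1(0)=I_2(0)=0$, $I_2>0$ on $X\setminus\{0\}$, $I_2$ is $r_1$-homogeneous (so $\alpha=r_1>1$), $I_1,I_2$ are even, and $I_2'$ is completely continuous (via the compact embedding $W_0^{1,p}(\Omega)\hookrightarrow L^{r_1}(\Omega)$) is routine. For $(F1)$ I would use the displayed form of $H$: since $r_1<q<p<r_2$ its first two coefficients are positive and its exponents obey $q<p<r_2$, so $t\mapsto t^{1-q}\frac{d}{dt}H(tu)$ is positive at $0$, increasing then decreasing, and $\to-\infty$, which gives the unique maximum point $s(u)$ and the limit $H(tu)\to-\infty$; uniformity on weakly compact sets follows because such a set is bounded and, by compactness of $W_0^{1,p}(\Omega)\hookrightarrow L^{r_2}(\Omega)$, $\|u_n\|_{r_2}$ stays bounded away from $0$ whenever $u_n\rightharpoonup u\ne 0$. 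The Sobolev inequality bounds $\|u\|_{r_2}$ uniformly on $\mathcal S$, and maximising $t\mapsto H(tu)$ explicitly then gives $h_0>0$.

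Step two is $(F2)$ and the extra conditions of Subsection~\ref{ineq}. Write $I_1=J-K$ with $J(u)=\frac1p\|u\|^p+\frac1q\|\nabla u\|_q^q$ and $K(u)=\frac1{r_2}\|u\|_{r_2}^{r_2}$, where $\|\cdot\|:=\|\nabla\cdot\|_p$. Then $J(u)\ge\frac1p\|u\|^p$ gives $\beta=p\ge\alpha$; $K'$ is completely continuous by compactness of the embedding into $L^{r_2}(\Omega)$; and, from the pointwise inequality $(|\xi|^{s-2}\xi-|\zeta|^{s-2}\zeta)\cdot(\xi-\zeta)\ge(|\xi|^{s-1}-|\zeta|^{s-1})(|\xi|-|\zeta|)$, valid for any $s>1$, applied with $s=p$ and $s=q$ and followed by Hölder's inequality, one obtains $(J'(u)-J'(v))(u-v)\ge(\|u\|^{p-1}-\|v\|^{p-1})(\|u\|-\|v\|)$, which is $(F2)$ with $\eta=p$. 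For Subsection~\ref{ineq} one takes $A(u)=\frac1q\|\nabla u\|_q^q$ (non-negative, bounded on bounded sets, $q$-homogeneous, so $\beta'=q\ge\alpha$), notes that $Q(u)=\alpha K(u)-K'(u)u=-\frac{r_2-r_1}{r_2}\|u\|_{r_2}^{r_2}\ge -C\|u\|^{r_2}$ with $r_2>\beta=p$, and that $H\in C^1(X)$. The only genuine computation is condition~(A) of Lemma~\ref{k3}: here $Q'(tv)v=-(r_2-r_1)\|v\|_{r_2}^{r_2}\,t^{r_2-1}$, so dividing the equation in~(A) by $t^{q-1}$ yields
$$(p-r_1)\,t^{p-q}+q(q-r_1)\tilde A=(r_2-r_1)\|v\|_{r_2}^{r_2}\,t^{r_2-q},$$
and since $0<p-q<r_2-q$ the difference of the two sides is $\ge 0$ at $t=0$, increasing then decreasing, and $\to-\infty$, hence has exactly one positive zero.

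With Steps one and two done, Corollary~\ref{cc1} gives the ground state and the pairs $\pm u_{n,c},\pm v_{n,c}$ at energy $c$ with $\lambda_{n,c}^{\pm}\to\infty$, the final theorem of Subsection~\ref{ineq} gives $\lambda_{n,c}^+<\lambda_{n,c}^-$, and—$I_1$ being bounded on bounded sets while $\lambda_c(u_{n,c}),\lambda_c(v_{n,c})\to\infty$—Proposition~\ref{p3} together with Lemma~\ref{k0} and Proposition~\ref{p2} forces $u_{n,c}\rightharpoonup 0$ with $u_{n,c}\not\to 0$, $\|v_{n,c}\|\to\infty$, and $\sup_n\|u_{n,c}\|\to 0$ as $c\to 0^-$; these are precisely the conclusions of Corollary~\ref{ap1}. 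I do not foresee a real obstacle here: the proof is a line-by-line transcription of that of Corollary~\ref{ap1}, and the only two places asking for a short argument—the $(p,q)$-monotonicity in $(F2)$ and condition~(A)—are both settled by the ordering $1<r_1<q<p<r_2$, which makes every exponent fall where it is needed.
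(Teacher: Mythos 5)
Your proposal is correct and follows essentially the same route as the paper, which simply asserts that $(F1)$, $h_0>0$, $(F2)$ (with $J(u)=\frac{1}{p}\|u\|^p+\frac{1}{q}\int_\Omega|\nabla u|^q$, $K(u)=\frac{1}{r_2}\|u\|_{r_2}^{r_2}$) and the extra conditions of Subsection \ref{ineq} hold because $1<r_1<q<p<r_2<p^*$, and then invokes the argument of Corollary \ref{ap1}. Your verifications of $(F1)$, of the monotonicity inequality in $(F2)$, and of condition (A) with $\beta=p$, $\beta'=q$, $Q'(tv)v=-(r_2-r_1)t^{r_2-1}\|v\|_{r_2}^{r_2}$ are exactly the details the paper leaves implicit, and they are accurate.
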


\section{Proof of Theorem \ref{taf}}

We note that $\Phi_{\cal A}^\lambda$ belongs to the class \eqref{egf} (see Appendix A below) with $E(u)={\cal E}^p_{p,\Omega}(u)$, $A(u)=\|u\|_q^q$, and $B(u)=\|u\|_r^r$. 
From the affine Sobolev inequality it follows that (H4) is satisfied.  
We set $$\Lambda_{\cal A}:=C(p,q,r) \inf_{u \in W_0^{1,p}(\Omega) \setminus \{0\}} \frac{{\cal E}_{p,\Omega}(u)^{p\frac{r-q}{r-p}}}{\|u\|_q^q\|u\|_r^{r\frac{p-q}{r-p}}}.$$
and we write $$\mathcal{N}_{\lambda}^{\pm}:=\mathcal{N}^{\pm}(\Phi_{\cal A}^\lambda).$$

\begin{proposition} \label{pa1}
There holds $\Lambda_{\cal A}>0$ and $\Phi_{\cal A}^\lambda$ satisfies (H1) for $0<\lambda<\Lambda_{\cal A}$. 
\end{proposition}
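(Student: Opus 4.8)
The plan is to reduce the statement to the abstract framework of the functionals of class \eqref{egf} developed in Appendix A, and then verify that the explicit quantities attached to $\Phi_{\cal A}^\lambda$ put us precisely in the regime where (H1) holds. First I would record that $\Phi_{\cal A}^\lambda$ has the form $\frac{1}{p}E(u)^{p}-\frac{\lambda}{q}A(u)-\frac{1}{r}B(u)$ with $E(u)={\cal E}_{p,\Omega}(u)$, $A(u)=\|u\|_q^q$, $B(u)=\|u\|_r^r$, where $E$ is positively $1$-homogeneous, $A$ is $q$-homogeneous, $B$ is $r$-homogeneous, and $1<q<p<r<p^*$; this is the concave--convex exponent configuration. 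For fixed $u\in W_0^{1,p}(\Omega)\setminus\{0\}$ the fibering map is
\begin{equation*}
\varphi_u(t):=\Phi_{\cal A}^\lambda(tu)=\frac{t^p}{p}E(u)^p-\frac{\lambda t^q}{q}A(u)-\frac{t^r}{r}B(u),\qquad t>0,
\end{equation*}
and the standard concave--convex analysis (cf. the cited works \cite{ABC,BW}) shows that $\varphi_u$ has exactly two critical points $t^+(u)<t^-(u)$, a local minimum followed by a local maximum, precisely when $\lambda$ is small enough that a certain scaling threshold is not exceeded.

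Concretely, I would set $g(t):=t^{p-q}E(u)^p-t^{r-q}B(u)$ (so that $\varphi_u'(t)=t^{q-1}\bigl(g(t)-\lambda A(u)\bigr)$) and observe that $g$ is strictly increasing on $(0,t_*(u))$ and strictly decreasing on $(t_*(u),\infty)$ with $g(0^+)=0$ and $g(+\infty)=-\infty$, where $t_*(u)$ is its unique maximiser. Hence $\varphi_u$ has two critical points iff $0<\lambda A(u)<\max_{t>0}g(t)=g(t_*(u))$. A direct computation of $t_*(u)$ and $g(t_*(u))$ (optimising a one-variable expression) yields
\begin{equation*}
\max_{t>0}g(t)=C(p,q,r)\,\frac{E(u)^{p\frac{r-q}{r-p}}}{B(u)^{\frac{p-q}{r-p}}}
\end{equation*}
for an explicit constant $C(p,q,r)>0$, so the two-critical-point condition for \emph{all} $u$ becomes
\begin{equation*}
\lambda<C(p,q,r)\inf_{u\neq 0}\frac{E(u)^{p\frac{r-q}{r-p}}}{A(u)\,B(u)^{\frac{p-q}{r-p}}}=\Lambda_{\cal A}.
\end{equation*}
That $\Lambda_{\cal A}>0$ follows from (H4), i.e. from the affine Sobolev inequality: it gives a constant bounding $\|u\|_r$ (and $\|u\|_q$) by ${\cal E}_{p,\Omega}(u)$, and combined with homogeneity this makes the infimum defining $\Lambda_{\cal A}$ strictly positive. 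With the strict inequality $0<\lambda<\Lambda_{\cal A}$ one gets $\lambda A(u)<g(t_*(u))$ uniformly, hence $t^+(u)<t^-(u)$ exist for every $u$, with $t^+$ a local minimiser and $t^-$ a local maximiser of $\varphi_u$.

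It remains to verify the boundedness clauses (1) and (2) of (H1). For this I would work on the unit sphere $\mathcal S$ and use homogeneity together with the affine Sobolev inequality to bound $A(u),B(u)$ from above on all of $\mathcal S$ and ${\cal E}_{p,\Omega}(u)$ from below on compact subsets of $\mathcal S$ (note $E$ is continuous but need not be bounded below away from $0$ on all of $\mathcal S$, which is exactly why (H1) only asks for control on compact subsets). From the defining relation $g(t^\pm(u))=\lambda A(u)$ and the shape of $g$, an upper bound on $t^-(u)$ and $t^+(u)$ over a compact $\mathcal K\subset\mathcal S$ follows from a lower bound on ${\cal E}_{p,\Omega}$ over $\mathcal K$, and a lower bound on $t^-(u)$ over all of $\mathcal S$ follows because $g(t^-(u))\le g(t_*(u))$ is finite while $g(t)\to 0$ as $t\to 0$ uniformly in the relevant range; similarly $t^+(u)$ is bounded away from $0$ on compacta. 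I expect the main obstacle to be precisely the delicate point that ${\cal E}_{p,\Omega}$ is not equivalent to the $W^{1,p}_0$-norm and may degenerate along non-compact directions in $\mathcal S$, so the estimates must be stated carefully over compact subsets only, exactly as in the statement of (H1); everything else is the routine one-variable concave--convex bookkeeping already used in \cite{ABC,BW,LM3}.
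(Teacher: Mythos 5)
Your route is the same as the paper's: you reproduce in explicit form the Appendix A analysis (Proposition \ref{p1} giving the per-direction threshold $\lambda(u)$, whose formula \eqref{dl} matches your computation of $\max_{t>0}g$), identify $\Lambda_{\cal A}=\inf_u\lambda(u)$, and get $\Lambda_{\cal A}>0$ from the affine Sobolev inequality, i.e. from the bounds $\|u\|_q^q\le C\,{\cal E}_{p,\Omega}(u)^q$ and $\|u\|_r^r\le C\,{\cal E}_{p,\Omega}(u)^r$, which is exactly how Lemma \ref{l0}(1) is used via (H3)--(H4). Your treatment of the bounds for $t^{\pm}$ on compact subsets of $\mathcal{S}$ (continuity and positivity of $A,B,E$, lower bound for ${\cal E}_{p,\Omega}$ on compacta) is also the argument of Corollary \ref{c0}.

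There is, however, a genuine gap at the one clause of (H1) that is not routine: (H1)(2) requires $t^-$ to be bounded away from zero on \emph{all} of $\mathcal{S}$, not merely on compact subsets. Your justification --- that $g(t^-(u))\le g(t_*(u))$ is finite while $g(t)\to 0$ as $t\to 0$ uniformly --- does not prove this: the relevant value is $g(t^-(u))=\lambda A(u)=\lambda\|u\|_q^q$, which is \emph{not} bounded away from zero on $\mathcal{S}$ (take any weakly null sequence on the sphere), so the smallness of $g$ near $0$ yields no lower bound on $t^-(u)$, and pointwise finiteness of $g(t_*(u))$ gives nothing. Your closing remark, that the estimates need only be stated over compact subsets, reflects the same misreading: that restriction is allowed for $t^+$ and for the upper bounds, but not for the lower bound on $t^-$. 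The correct argument (this is precisely Corollary \ref{c0}, or Lemma \ref{NBH}) uses the location of $t^-$ past the maximiser of $g$: since $t^-(u)>t_*(u)$ and
\begin{equation*}
t_*(u)^{\,r-p}=\frac{p-q}{r-q}\,\frac{{\cal E}_{p,\Omega}(u)^p}{\|u\|_r^r}\;\ge\; c\,{\cal E}_{p,\Omega}(u)^{p-r},
\end{equation*}
by the affine Sobolev inequality, and since ${\cal E}_{p,\Omega}(u)\le C\|u\|_{W_0^{1,p}}=C$ on $\mathcal{S}$ (the second half of (H3)), one gets $t_*(u)$, hence $t^-(u)$, bounded away from zero uniformly on $\mathcal{S}$. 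With this step replaced, your proof is complete and coincides with the paper's.
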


\begin{proof}
It is a consequence of Corollary \ref{c0} and Lemma \ref{l0}.
\end{proof}

\begin{proposition} \label{pa2}
$\Phi_{\cal A}^\lambda$ satisfies $(H2)^{\pm}$.
\end{proposition}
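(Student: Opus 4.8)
The plan is to verify the three ingredients of $(H2)^{\pm}$ for $\Phi_{\cal A}^\lambda$: evenness, boundedness from below on $\mathcal{N}^\pm_\lambda$, and the Palais--Smale condition on $\mathcal{N}^\pm_\lambda$. Evenness is immediate since ${\cal E}_{p,\Omega}$, $\|\cdot\|_q^q$ and $\|\cdot\|_r^r$ are all even. For the remaining two properties I would work on $\mathcal{S}$ via the homeomorphisms $m^\pm$ and exploit the homogeneity structure of the three functionals $E(u)={\cal E}^p_{p,\Omega}(u)$, $A(u)=\|u\|_q^q$, $B(u)=\|u\|_r^r$, which are $p$-, $q$- and $r$-homogeneous respectively. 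Along a fibre the fibering map is $\varphi_u(t)=\frac1p E(u)t^p-\frac{\lambda}{q}A(u)t^q-\frac1r B(u)t^r$, and $t^+(u)$, $t^-(u)$ are its two critical points with $t^+(u)$ the local minimiser (lying below the inner zero of $\varphi_u'$) and $t^-(u)$ the local maximiser.

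For boundedness from below on $\mathcal{N}^+_\lambda$: since $t^+(u)$ is a local minimum of $\varphi_u$ and $\varphi_u(0)=0$ with $\varphi_u$ decreasing near $0$ (the $q$-term dominates for small $t$ since $q<p<r$), we get $\Phi_{\cal A}^\lambda(m^+(u))=\varphi_u(t^+(u))<0$, so $\Phi_{\cal A}^\lambda$ is negative on $\mathcal{N}^+_\lambda$; a quantitative lower bound of the form $\varphi_u(t^+(u))\ge -C\lambda^{\frac{2}{2-q}}$ follows by minimising the elementary function $s\mapsto \frac1p E s^p-\frac{\lambda}{q}A s^q$ and controlling $A$ via the affine Sobolev inequality together with (H1)(1) (boundedness of $t^+$ on compacts — here one uses that on the whole sphere the relevant bound comes from $\lambda<\Lambda_{\cal A}$). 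For $\mathcal{N}^-_\lambda$ one argues that $\Phi_{\cal A}^\lambda(m^-(u))=\varphi_u(t^-(u))\ge \max_{t>0}\bigl(\frac1p E(u)t^p-\frac1r B(u)t^r\bigr)-\frac{\lambda}{q}A(u)(t^-(u))^q$, and a Young-type inequality bounds this below; the cleanest route is to note that $\Phi_{\cal A}^\lambda(m^-(u))\ge \Phi_{\cal A}^\lambda(tu)$ for every $t$, in particular for a well-chosen $t$ making the right-hand side manifestly bounded below, uniformly in $u\in\mathcal{S}$.

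For the Palais--Smale condition: let $(u_n)\subset\mathcal{N}^\pm_\lambda$ with $\Phi_{\cal A}^\lambda(u_n)$ bounded and $(\Phi_{\cal A}^\lambda)'(u_n)\to0$. The first step is boundedness of $(u_n)$ in $W_0^{1,p}(\Omega)$: combining $\Phi_{\cal A}^\lambda(u_n)=O(1)$ with $(\Phi_{\cal A}^\lambda)'(u_n)u_n=o(\|u_n\|)$ and the homogeneities yields, after eliminating the $B$-term, a bound $c_1 E(u_n)\le c_2 + c_3\lambda\|u_n\|_q^q + o(\|u_n\|)$ with $c_1>0$ (since $r>p$), and since $E(u)^{1/p}$ is equivalent to $\|u\|$ on $W_0^{1,p}(\Omega)$ by the affine Sobolev inequality while $q<p$, this forces $(u_n)$ bounded. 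Up to a subsequence $u_n\rightharpoonup u$ in $W_0^{1,p}(\Omega)$ and $u_n\to u$ in $L^q(\Omega)$ and $L^r(\Omega)$ by Rellich. On $\mathcal{N}^-_\lambda$, or on $\mathcal{N}^+_\lambda$, one must rule out $u=0$: by Lemma \ref{l0}/Corollary \ref{c0} (the appendix results quantifying the two-critical-point structure) the level $\varphi_{u_n}(t^\pm(u_n))$ stays away from $0$ — for $\mathcal{N}^-$ the max-level is bounded below by a positive constant on bounded sets, and for $\mathcal{N}^+$ one uses that $\|u_n\|$ is bounded away from $0$ (Proposition \ref{p2}-type argument, here following from $\lambda<\Lambda_{\cal A}$) — so $u\ne0$. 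The final and genuinely delicate step is to upgrade weak to strong convergence: this is where the lack of a standard $S^+$-type property for the affine $p$-energy bites, and one must invoke the compactness/continuity properties of ${\cal E}_{p,\Omega}$ established in the appendix (the analogue of \cite[Lemma 3.5]{LRQS}). Concretely, from $(\Phi_{\cal A}^\lambda)'(u_n)(u_n-u)\to0$ and $A'(u_n)(u_n-u)\to0$, $B'(u_n)(u_n-u)\to0$ (strong $L^q$, $L^r$ convergence) one extracts $E'(u_n)(u_n-u)\to0$, and then the uniform convexity of $W_0^{1,p}(\Omega)$ together with the structural inequality for the affine energy gradient (Appendix A) gives $u_n\to u$ in $W_0^{1,p}(\Omega)$, whence $u_n\to u$ in $\mathcal{N}^\pm_\lambda$ by continuity of $m^\pm$ and its inverse. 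The main obstacle, as anticipated, is precisely this last step — deriving strong convergence for the affine operator, which is not monotone in the usual sense — and it is handled by reducing, via the already-established appendix lemmas, to the corresponding statement for the (quasilinear, but better-behaved) energy functional.
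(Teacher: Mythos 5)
Your plan follows the right skeleton (evenness, lower bound on $\mathcal{N}^\pm_\lambda$, Palais--Smale via weak limit $\neq 0$ plus an $(S_+)$-type step), but it contains a genuine gap at the decisive point. You claim that ``$E(u)^{1/p}$ is equivalent to $\|u\|$ on $W^{1,p}_0(\Omega)$ by the affine Sobolev inequality'' and use this to conclude that the Palais--Smale sequence is bounded in $W^{1,p}_0(\Omega)$, after which you apply the classical Rellich theorem. This equivalence is false: one has ${\cal E}_{p,\Omega}(u)\le C\|\nabla u\|_p$ and the affine Sobolev inequality bounds \emph{Lebesgue} norms by ${\cal E}_{p,\Omega}(u)$, but the affine energy is invariant under volume-preserving linear changes of variables and may remain bounded along sequences that are unbounded in $W^{1,p}_0(\Omega)$ --- this lack of coercivity is exactly the difficulty emphasized in the paper (and in \cite{LM2}), and it is why the Palais--Smale property for $\Phi^\lambda_{\cal A}$ is delicate in the first place. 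Consequently your deduction of boundedness, and the subsequent compact embedding step, do not stand. The paper's proof goes the other way around: from Lemma \ref{l0} the boundedness of $(\Phi^\lambda_{\cal A}(u_k))$ on $\mathcal{N}_\lambda$ already yields that $({\cal E}_{p,\Omega}(u_k))$ is bounded; the \emph{affine} Rellich--Kondrachov theorem (\cite[Theorem 6.5.3]{T}) then gives $u_k\to u$ in $L^s(\Omega)$, $1\le s<p^*$; the limit is nonzero via the constraint identity ${\cal E}^p_{p,\Omega}(u_k)=\lambda\|u_k\|_q^q+\|u_k\|_r^r$ on $\mathcal{N}^-_\lambda$ (where ${\cal E}_{p,\Omega}(u_k)$ is also bounded away from zero) and via $\lambda\bigl(\tfrac1q-\tfrac1r\bigr)\|u_k\|_q^q=\bigl(\tfrac1p-\tfrac1r\bigr){\cal E}^p_{p,\Omega}(u_k)-\Phi^\lambda_{\cal A}(u_k)$ with negative limiting level on $\mathcal{N}^+_\lambda$; only \emph{then} does \cite[Corollary 2.1]{LM1} give boundedness in $W^{1,p}_0(\Omega)$, and the final strong convergence follows from $\langle\Delta^{\cal A}_p u_k,u_k-u\rangle\to0$ together with \cite[Theorem 2]{LM2} --- not from any ``structural inequality for the affine energy gradient'' in Appendix A, which contains no such result.

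Two secondary points. Ruling out $u=0$ on $\mathcal{N}^+_\lambda$ by saying ``$\|u_n\|$ is bounded away from zero'' is insufficient: a sequence with norms bounded away from zero can perfectly well converge weakly to $0$; one needs the energy identity above (or an equivalent nonvanishing argument in a Lebesgue norm). Also, your assertion that $\Phi^\lambda_{\cal A}(m^-(u))\ge\Phi^\lambda_{\cal A}(tu)$ for \emph{every} $t>0$ is false, since $t^-(u)$ is only a local maximiser of the fibering map (indeed $\Phi^\lambda_{\cal A}(v_{\lambda,1})<0$ for $\lambda$ close to $\Lambda_{\cal A}$, while $\Phi^\lambda_{\cal A}(tu)\to0^-$ as $t\to0^+$); the clean route to the lower bound, which is the one the paper takes, is the Nehari identity $\Phi^\lambda_{\cal A}(u)=\tfrac{r-p}{pr}{\cal E}^p_{p,\Omega}(u)-\lambda\tfrac{r-q}{qr}\|u\|_q^q$ on $\mathcal{N}_\lambda$ combined with (H4), i.e.\ Lemma \ref{l0} of Appendix A.
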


\begin{proof}
It is clear that $\Phi_{\cal A}^\lambda$ is even and by Lemma \ref{l0} it is bounded from below on $\mathcal{N}_{\lambda}$. Let $(u_k) \subset \mathcal{N}_{\lambda}^{\pm}$ be such that $(\Phi_{\cal A}^\lambda(u_k))$ is bounded and $\Vert (\Phi_{\cal A}^\lambda)'(u_k) \Vert_{W^{-1, p'}(\Omega)} \rightarrow 0$. We claim that, up to a subsequence, $u_k \rightarrow u \neq 0$ in $L^s(\Omega)$, for $1\leq s<p^*$, so that  $(u_k)$ is bounded in $W_0^{1,p}(\Omega)$ by  \cite[Corollary 2.1]{LM1}. Indeed, 
if $(u_k)\subset \mathcal{N}_{\lambda}^-$ then, by Lemma \ref{l0}, we know that $({\cal E}_{p,\Omega}(u_k))$ is bounded (away from zero and from above). So, by the affine Rellich-Kondrachov compactness theorem (see \cite[Theorem 6.5.3]{T}) we can assume that $u_k \rightarrow u$ in $L^s(\Omega)$, for $1\leq s<p^*$, and from 
${\cal E}_{p,\Omega}(u_k)=\lambda \|u_k\|_q^q+\|u_k\|_r^r$
 we find that $u\neq 0$.  If now  $(u_k)\subset \mathcal{N}_{\lambda}^+$ then we can assume that $\Phi_{\cal A}^\lambda(u_k) \to \lambda_n^+$, and since $\Phi_{\cal A}^\lambda(u)<0$ for any $u \in \mathcal{N}_{\lambda}^+$ we have $\lambda_n^+<0$ for every $n$. We know that
\[
\lambda\left(\frac{1}{q}-\frac{1}{r}\right)\|u_k\|_q^q =  \left(\frac{1}{p}-\frac{1}{r}\right){\cal E}^p_{p,\Omega}(u_k) - \Phi_{\cal A}^\lambda(u_n),
\]
so that  $\|u\|_q > 0$ i.e. $u\neq 0$. Thus we can assume that $u_k \rightharpoonup u \neq 0$ in $W_0^{1,p}(\Omega)$. Moreover, $\Vert \Phi'_{\cal A}(u_k) \Vert_{W^{-1, p'}(\Omega)} \rightarrow 0$ clearly implies that $\Phi'_{\cal A}(u_k)(u_k - u) \to 0$,
which by standard arguments yields
$ \langle \Delta^{\cal A}_p u_k, u_k - u \rangle \to 0$.
Finally, by \cite[Theorem 2]{LM2}, $u_k \to u$ in $W_0^{1,p}(\Omega)$, and the proof is complete.\\
\end{proof}


\begin{proposition}\label{pa3}
Assume that $p>q>\left(1-\frac{1}{N}\right)p$. Then there exists a constant $C>0$ such that $\|u\| \leq C  \lambda^{\frac{p}{p-q}}$ for any $u \in \mathcal{N}_{\lambda}^{+}$. In particular $\mathcal{N}_{\lambda}^{+}$ is bounded.
\end{proposition}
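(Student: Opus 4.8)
The plan is to exploit the fact that every $u \in \mathcal{N}_\lambda^+$ is of the form $u = t_\lambda^+(v)v$ with $v \in \mathcal{S}$, so that $u$ satisfies the Nehari-type identity $H(u) = 0$ (recall $\Phi_{\cal A}^\lambda$ has no prescribed-energy constant here, so $\mathcal{N}(\Phi_{\cal A}^\lambda) = \{u \neq 0 : H(u) = 0\}$ with $H(u) = (\Phi_{\cal A}^\lambda)'(u)u$). Writing this out, for $u \in \mathcal{N}_\lambda$ we have $\mathcal{E}^p_{p,\Omega}(u) = \lambda\|u\|_q^q + \|u\|_r^r$. The key structural point distinguishing $\mathcal{N}_\lambda^+$ from $\mathcal{N}_\lambda^-$ is that on $\mathcal{N}_\lambda^+$ the fibering map is at a local \emph{minimum}, which gives the second-order information $\frac{d^2}{dt^2}\Phi_{\cal A}^\lambda(tu)\big|_{t=1} \ge 0$, i.e.
\[
(p-1)\mathcal{E}^p_{p,\Omega}(u) \ge (q-1)\lambda\|u\|_q^q + (r-1)\|u\|_r^r.
\]
Combining this inequality with the Nehari identity (multiply the identity by $(r-1)$ and subtract) eliminates the $\|u\|_r^r$ term and yields $(r-p)\mathcal{E}^p_{p,\Omega}(u) \le (r-q)\lambda\|u\|_q^q$, hence $\mathcal{E}^p_{p,\Omega}(u) \le \frac{r-q}{r-p}\lambda\|u\|_q^q$.

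Next I would interpolate and use Sobolev-type embeddings. The hypothesis $q > (1-\tfrac1N)p$ is precisely what guarantees $q > p - p/N$, equivalently that the affine $p$-energy controls $\|u\|_q$: under this condition one has an inequality of the form $\|u\|_q^q \le C\, \mathcal{E}^p_{p,\Omega}(u)^{q/p} \cdot (\text{lower order})$, or more simply $\|u\|_q \le C\,\mathcal{E}_{p,\Omega}(u)$ by the affine Sobolev inequality since $q < p^*$ and we are on a bounded domain (the borderline role of $q$ relative to $p(1-1/N)$ ensures the relevant interpolation exponent stays in range, and in any case $q<p<p^*$ suffices for $\|u\|_q \le C\mathcal{E}_{p,\Omega}(u)$). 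Plugging $\|u\|_q^q \le C\,\mathcal{E}_{p,\Omega}(u)^q$ into the displayed inequality gives $\mathcal{E}^p_{p,\Omega}(u) \le C\lambda\, \mathcal{E}_{p,\Omega}(u)^q$, hence $\mathcal{E}_{p,\Omega}(u)^{p-q} \le C\lambda$, so $\mathcal{E}_{p,\Omega}(u) \le (C\lambda)^{1/(p-q)}$. Finally, by \cite[Corollary 2.1]{LM1} (or the affine Sobolev/Poincaré inequality) the affine $p$-energy is comparable to $\|u\|_{W_0^{1,p}}$ from below, i.e. $\|u\| \le C'\,\mathcal{E}_{p,\Omega}(u)$, which yields $\|u\| \le C\lambda^{1/(p-q)}$.

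One caveat: the exponent I naturally arrive at is $\lambda^{1/(p-q)}$, whereas the statement claims $\lambda^{p/(p-q)}$; I would recheck the bookkeeping — if the affine Sobolev inequality is applied at the level of $\mathcal{E}^p_{p,\Omega}$ rather than $\mathcal{E}_{p,\Omega}$, or if one bounds $\|u\|$ by a power of $\mathcal{E}^p_{p,\Omega}$, the stated exponent $\lambda^{p/(p-q)}$ emerges instead, so the discrepancy is just a matter of which normalization of the energy functional one tracks through the estimates. The boundedness of $\mathcal{N}_\lambda^+$ is then immediate.

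The main obstacle I anticipate is not the algebra but justifying the \emph{second-order inequality} on $\mathcal{N}_\lambda^+$ rigorously: since $\Phi_{\cal A}^\lambda$ is only $C^1$ (not $C^2$) and $\mathcal{E}_{p,\Omega}$ is built from an integral over $\mathbb{S}^{N-1}$ of $L^p$-norms, the fibering map $t \mapsto \Phi_{\cal A}^\lambda(tu)$ needs to be shown twice differentiable in $t$ (which it is, since $t \mapsto t^p\mathcal{E}^p_{p,\Omega}(u) - \frac{\lambda}{q}t^q\|u\|_q^q - \frac1r t^r\|u\|_r^r$ is a smooth function of the scalar $t$), and then the local-minimum property from (H1) gives $\ge 0$ for the second derivative at $t = t_\lambda^+(v)$. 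This is legitimate but must be stated carefully; alternatively, one can avoid second-order information entirely by using only the monotonicity structure of $H$ from (F1)-type arguments together with the fact that on $\mathcal{N}^+$ one has $t_\lambda^+(v) < s(v)$, where $H(\cdot v)$ is increasing — but the cleanest route is the second-derivative sign condition, so that is what I would use, being explicit that the relevant fibering map is a genuine $C^2$ function of the real variable $t$.
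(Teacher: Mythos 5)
Your first step is fine and matches the paper: on $\mathcal{N}_\lambda^+$ the fibering map $t\mapsto \frac{t^p}{p}\mathcal{E}^p_{p,\Omega}(u)-\frac{\lambda t^q}{q}\|u\|_q^q-\frac{t^r}{r}\|u\|_r^r$ is a smooth function of the scalar $t$, and combining $\varphi'(1)=0$ with $\varphi''(1)\ge 0$ indeed gives $(r-p)\,\mathcal{E}^p_{p,\Omega}(u)\le (r-q)\lambda\|u\|_q^q$. The genuine gap is your last step: the inequality $\|u\|\le C\,\mathcal{E}_{p,\Omega}(u)$ is false (for $N\ge 2$). The affine energy satisfies $\mathcal{E}_{p,\Omega}(u)\le C\|\nabla u\|_p$ and controls Lebesgue norms through the affine Sobolev inequality, but it does \emph{not} control the $W^{1,p}_0$-norm: it behaves like a geometric mean of the directional derivatives and remains bounded along sequences that are squeezed in one direction, so bounded affine energy is compatible with $\|\nabla u\|_p\to\infty$. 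This non-coercivity is precisely the known difficulty with $\mathcal{E}_{p,\Omega}$ that the whole paper has to work around, and \cite[Corollary 2.1]{LM1} is not a two-sided comparison either: it yields boundedness in $W^{1,p}_0(\Omega)$ only when the affine energy bound is coupled with nondegenerate control of a Lebesgue norm of $u$. Consequently your argument stops at a bound for $\mathcal{E}_{p,\Omega}(u)$ on $\mathcal{N}_\lambda^+$ and cannot reach a bound for $\|u\|$.

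The missing ingredient, which is what the paper uses, is the interpolation-type inequality of \cite[Theorem 9]{HJM}, $\mathcal{E}_{p,\Omega}(u)\ge C\|u\|_p^{\frac{N-1}{N}}\|u\|^{\frac{1}{N}}$. Combining it with $\mathcal{E}^p_{p,\Omega}(u)\le C\lambda\|u\|_q^q$ and $\|u\|_q\le C\|u\|_p\le C\|u\|$ gives $\|u\|\le C\lambda^{N/p}\|u\|^{\theta}$ with $\theta=N(\frac{q}{p}-1)+1$, and the hypothesis $q>(1-\frac1N)p$ is exactly what makes $0<\theta<1$ so that the norm can be absorbed. The fact that your argument never uses this hypothesis (you even remark that $q<p<p^*$ would suffice) is the telltale sign that the route cannot be right, since the restriction on $q$ is essential here. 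As for the exponent of $\lambda$: this is a secondary bookkeeping issue (carrying the powers consistently through the absorption gives a positive power of $\lambda$, which is all that is needed for Theorem \ref{taf}(3)); the structural problem above is the real defect to fix.
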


\begin{proof}

Indeed, let $u \in \mathcal{N}_{\lambda}^{+}$. Then ${\cal E}_{p,\Omega}(u)< C\lambda \Vert u\Vert_{q}^{\frac{q}{p}}$. On the other hand, by  \cite[Theorem 9]{HJM}, we get
${\cal E}_{p,\Omega}(u)\geq C\Vert u\Vert_{p}^{\frac{N-1}{N}}\Vert u\Vert^{\frac{1}{N}}$,
where $C$ is a positive constant. Then
$\Vert u\Vert^{\frac{1}{N}}\leq C\lambda^N \Vert u\Vert_{q}^{\frac{q}{p}}\Vert u\Vert_{p}^{-\frac{N-1}{N}}$
and since $q<p$, we have $\Vert u\Vert\leq C\lambda \Vert u\Vert_{p}^{N(\frac{q}{p}-1)+1}$. Now, note that $q>\left(1-\frac{1}{N}\right)p$ implies $0<N\left(\frac{q}{p}-1\right)+1<1$. Therefore
$\Vert u\Vert\leq C\lambda^N \Vert u\Vert^{N(\frac{q}{p}-1)+1}$ and so $\Vert u\Vert\leq C \lambda^{\frac{p}{p-q}}$.

\end{proof}

\begin{remark} Denote $P(u)={\cal E}^p_{p,\Omega}(u)$, $Q_\lambda(u)=-\lambda\|u\|_q^q-\|u\|_r^r$, and  $H_\lambda(u):=(\Phi_{\cal A}^\lambda)'(u)u =P(u)+Q_\lambda(u)$. Then the condition $\lambda\in(0,\Lambda_{\mathcal{A}})$ is equivalent to the following fact: for every $v\in \mathcal{S}$, the map $(0,\infty)\ni t\mapsto H_\lambda(tv)$ has exactly two zeros $t^+(v)<t^-(v)$, and a unique local maximizer  $s(v)\in (t^+(v),t^-(v))$.  From the definition of $\Lambda_{\mathcal{A}}$ it follows that
$\displaystyle \inf_{v\in \mathcal{S}} H_\lambda(s(v)v)=i_\lambda>0$ for $\lambda\in(0,\Lambda_{\mathcal{A}})$.
Assume now that $\mathcal{N}_{\lambda}^{+}$ is bounded. Then, by adapting the proofs of Lemmas \ref{k1}, \ref{k3} and Proposition \ref{k4} in Sections \ref{ineq}, we can prove the following:
\begin{lemma}\label{k11} If $\lambda\in(0,\Lambda_{\mathcal{A}})$ then there exists $\varepsilon>0$ such that $s(v)-t^+(v)\ge \varepsilon$ for all $v\in S$.
\end{lemma}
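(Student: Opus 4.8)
The plan is to run the contradiction argument of Lemma~\ref{k1}, taking advantage of the special structure of $\Phi_{\cal A}^\lambda$. Recall that $P(u)={\cal E}^p_{p,\Omega}(u)$ is positively $p$-homogeneous, so $P(tv)=t^p\,{\cal E}_{p,\Omega}(v)^p$, and that the restriction of $Q_\lambda$ to a ray is the explicit sum of powers $Q_\lambda(tv)=-\lambda t^q\|v\|_q^q-t^r\|v\|_r^r$. By the affine Sobolev inequality and the continuous embeddings $W_0^{1,p}(\Omega)\hookrightarrow L^q(\Omega),L^r(\Omega)$, the quantities ${\cal E}_{p,\Omega}(v)$, $\|v\|_q$ and $\|v\|_r$ are bounded on $\mathcal{S}$. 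Recall also, from the remark preceding the lemma, that $H_\lambda(t^+(v)v)=0$ for every $v\in\mathcal{S}$, that $s(v)\in(t^+(v),t^-(v))$, and that $H_\lambda(s(v)v)\ge i_\lambda>0$ for $\lambda\in(0,\Lambda_{\cal A})$.

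I would argue by contradiction: suppose there is a sequence $(v_n)\subset\mathcal{S}$ with $s(v_n)-t^+(v_n)\to 0$. Since $\mathcal{N}_\lambda^+$ is bounded and $t^+(v_n)=\|t^+(v_n)v_n\|$, the sequence $(t^+(v_n))$ is bounded, and hence so is $(s(v_n))$ because $s(v_n)=t^+(v_n)+o(1)$. Passing to a subsequence, I would assume $t^+(v_n)\to t$ for some $t\ge 0$, so that also $s(v_n)\to t$.

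Next I would compare $H_\lambda=P+Q_\lambda$ at $t^+(v_n)v_n$ and at $s(v_n)v_n$. By the $p$-homogeneity of $P$,
\[
P\bigl(t^+(v_n)v_n\bigr)-P\bigl(s(v_n)v_n\bigr)=\bigl((t^+(v_n))^p-(s(v_n))^p\bigr)\,{\cal E}_{p,\Omega}(v_n)^p\longrightarrow 0,
\]
since the bracket tends to $0$ and ${\cal E}_{p,\Omega}(v_n)^p$ is bounded; expanding $Q_\lambda(t^+(v_n)v_n)-Q_\lambda(s(v_n)v_n)$ and using $(t^+(v_n))^q-(s(v_n))^q\to 0$, $(t^+(v_n))^r-(s(v_n))^r\to 0$ together with the boundedness of $\|v_n\|_q,\|v_n\|_r$, one likewise gets $Q_\lambda(t^+(v_n)v_n)-Q_\lambda(s(v_n)v_n)\to 0$. (If one prefers to stay closer to the proof of Lemma~\ref{k1}, one may instead extract a weak limit $v_n\rightharpoonup v$ in $W_0^{1,p}(\Omega)$ and invoke the compact embeddings.) Hence $H_\lambda(t^+(v_n)v_n)-H_\lambda(s(v_n)v_n)\to 0$. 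But $H_\lambda(t^+(v_n)v_n)=0$ for every $n$ while $H_\lambda(s(v_n)v_n)\ge i_\lambda>0$, so this difference stays $\le -i_\lambda<0$, a contradiction. This produces the desired $\varepsilon>0$.

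The only genuinely delicate point is extracting the convergent subsequence of $(t^+(v_n))$ — equivalently, having a uniform upper bound for $t^+$ on $\mathcal{S}$ — and this is exactly where the standing assumption that $\mathcal{N}_\lambda^+$ is bounded (guaranteed in our applications by Proposition~\ref{pa3}) enters; it plays here the role of the upper bound on $u\mapsto t_c^+(u)$ in Lemma~\ref{k1}. The remaining steps are routine, relying only on the homogeneity of $P$, the explicit form of $Q_\lambda$, the boundedness of ${\cal E}_{p,\Omega}$, $\|\cdot\|_q$, $\|\cdot\|_r$ on $\mathcal{S}$, and the strict positivity $i_\lambda>0$ valid for $\lambda\in(0,\Lambda_{\cal A})$.
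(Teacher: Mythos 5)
Your proof is correct and follows essentially the same route the paper indicates for Lemma \ref{k11}, namely adapting the contradiction argument of Lemma \ref{k1} with the key inequality $H_\lambda(t^+(v_n)v_n)=0<i_\lambda\le H_\lambda(s(v_n)v_n)$, the boundedness of $\mathcal{N}_\lambda^+$ supplying the upper bound on $t^+$ over $\mathcal{S}$. The only cosmetic difference is that, since $P$ and both terms of $Q_\lambda$ are homogeneous along rays with coefficients bounded on $\mathcal{S}$, you can dispense with the weak-limit extraction used in Lemma \ref{k1}, as you yourself observe.
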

In the proof of Lemma \ref{k11} we see that the contradiction obtained in inequality \ref{contradineq} is achieved by noting that now we have
$H_\lambda(t^+(v_n)v_n)=0<i_\lambda\le 	H_\lambda(s(v_n)v_n)$  for every $n\in \mathbb{N}$.

\begin{lemma}	If $\lambda\in(0,\Lambda_{\mathcal{A}})$ and $\varepsilon>0$ is such that $t^+(v)+2\varepsilon<s(v)$, then there exists $d>0$ such that $H_\lambda'(tv)v>d$ for all $v\in S$ and $t\in [t^+(v),t^+(v)+\varepsilon]$.
\end{lemma}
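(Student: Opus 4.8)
The plan is to imitate, almost line by line, the proof of Lemma~\ref{k3}, with $P(u)={\cal E}^p_{p,\Omega}(u)$ (which is $p$-homogeneous, so $P'(u)u=p\,{\cal E}^p_{p,\Omega}(u)$) playing the role of the coercive part and $Q_\lambda(u)=-\lambda\|u\|_q^q-\|u\|_r^r$ playing the role of the perturbation, noting that $Q_\lambda'$ is completely continuous on $W_0^{1,p}(\Omega)$ by Rellich--Kondrachov since $q,r<p^*$. Along a ray, $H_\lambda(\sigma v)=\sigma^p{\cal E}^p_{p,\Omega}(v)-\lambda\sigma^q\|v\|_q^q-\sigma^r\|v\|_r^r$, hence $H_\lambda'(\sigma v)v=\frac{d}{d\sigma}H_\lambda(\sigma v)=p\sigma^{p-1}{\cal E}^p_{p,\Omega}(v)-\lambda q\sigma^{q-1}\|v\|_q^q-r\sigma^{r-1}\|v\|_r^r$. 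Since $1<q<p<r$, the function $\sigma\mapsto H_\lambda(\sigma v)$ is strictly decreasing near $0$, has a strict local minimum at some $\sigma_1(v)<t^+(v)$, and is strictly increasing on $(\sigma_1(v),s(v))$; as $t^+(v)+2\varepsilon<s(v)$, the interval $[t^+(v),t^+(v)+\varepsilon]$ lies strictly inside $(\sigma_1(v),s(v))$, so $H_\lambda'(tv)v>0$ there. It therefore suffices to exclude that $\inf_{v\in S,\ t\in[t^+(v),t^+(v)+\varepsilon]}H_\lambda'(tv)v=0$.

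Arguing by contradiction, pick $(v_n)\subset S$ and $t_n\in[t^+(v_n),t^+(v_n)+\varepsilon]$ with $H_\lambda'(t_nv_n)v_n\to0$. Since $\mathcal{N}_\lambda^+$ is bounded, $t^+(v_n)=\|t^+(v_n)v_n\|$ is bounded, so we may assume $t_n\to t\ge0$; assume first $t>0$ (the possibility $t=0$ is subsumed in the delicate step below). Passing to a subsequence, $v_n\rightharpoonup v$ in $W_0^{1,p}(\Omega)$, whence $v_n\to v$ in $L^q(\Omega)$ and $L^r(\Omega)$ by Rellich--Kondrachov, and ${\cal E}^p_{p,\Omega}(v_n)\to E\ge0$ (the sequence being bounded by the affine P\'olya--Szeg\H{o} inequality ${\cal E}_{p,\Omega}(v_n)\le\|\nabla v_n\|_p=1$). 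Then the polynomials $g_n(\sigma):=H_\lambda(\sigma v_n)$ converge in $C^1_{\mathrm{loc}}((0,\infty))$ to $g(\sigma):=\sigma^pE-\lambda\sigma^q\|v\|_q^q-\sigma^r\|v\|_r^r$, so $g'(t)=\lim g_n'(t_n)=0$. Here the polynomial structure of $g$ replaces condition~(A): $g'(\sigma)=\sigma^{q-1}\bigl(pE\sigma^{p-q}-\lambda q\|v\|_q^q-r\|v\|_r^r\sigma^{r-q}\bigr)$ and the bracket, being unimodal and negative at $0^+$ and at $+\infty$, has at most two positive zeros.

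If $v\ne0$, then $\|v\|_q,\|v\|_r>0$ and, exactly as in Lemma~\ref{k3} (using that $\lambda<\Lambda_{\cal A}$ so that $H_\lambda(s(v_n)v_n)\ge i_\lambda$ keeps $s(v_n)$ bounded and away from $0$), one gets $s(v_n)\to s>0$ and $g'(s)=0$. As $s(v_n)-t_n\ge\varepsilon$ forces $s-t\ge\varepsilon>0$, the two positive zeros of the bracket must be $t=\sigma_1(v)$ and $s=\sigma_2(v)$, so $g$ has a strict local minimum at $t$; since $g$ is strictly negative immediately to the right of $0$ (its derivative behaving like $-\lambda q\|v\|_q^q\sigma^{q-1}<0$), this gives $g(t)<g(0)=0$. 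On the other hand $t^+(v_n)\in(\sigma_1(v_n),\sigma_2(v_n))$ is bounded and its subsequential limit $\bar t$ satisfies $\sigma_1(v)\le\bar t\le t\le\sigma_2(v)$, whence $\bar t=\sigma_1(v)=t$; passing to the limit in $g_n(t^+(v_n))=0$ then yields $g(t)=0$, a contradiction.

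The remaining case $v=0$ is the point I expect to be the main obstacle. Here $\|v_n\|_q,\|v_n\|_r\to0$, so $H_\lambda'(t_nv_n)v_n\to0$ forces ${\cal E}^p_{p,\Omega}(v_n)\to0$ (and the same analysis handles $t=0$), and one must contradict $v_n\in S$. Unlike in Lemma~\ref{k3}, the ``coercive part'' ${\cal E}^p_{p,\Omega}$ is \emph{not} norm--coercive on $S$ — the affine energy may degenerate along oscillating sequences — so the argument of Lemma~\ref{k3} (that $P'(v_n)v_n\not\to0$) does not transfer, and one is forced to argue with \emph{values}. Since $v_n\in S$, the definition of $\Lambda_{\cal A}$ gives $H_\lambda(s(v_n)v_n)\ge i_\lambda>0$, while $t^+(v_n)v_n\in\mathcal{N}_\lambda^+$ (hence $\Phi_{\cal A}^\lambda(t^+(v_n)v_n)<0$ and $\|t^+(v_n)v_n\|\le M$); the delicate part is to combine these with the standing regime in which $\mathcal{N}_\lambda^+$ is bounded — in practice $q>p(1-\tfrac1N)$, where $\,{\cal E}_{p,\Omega}(u)$ can be quantified in terms of $\|u\|_p$ and $\|u\|$ via \cite[Theorem 9]{HJM}, as in Proposition~\ref{pa3} — in order to rule out ${\cal E}^p_{p,\Omega}(v_n)\to0$. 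Once the case $v=0$ is excluded, the proof is complete. All other steps are routine affine analogues of Lemma~\ref{k3}; establishing this non-degeneracy of the affine energy along the relevant sequences is the only genuinely new ingredient, and it is where the whole difficulty of the lemma is concentrated.
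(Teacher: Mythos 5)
Your reduction to the two cases $v\neq 0$ and $v=0$ follows the scheme of Lemma~\ref{k3}, and the case $v\neq 0$ is handled convincingly: replacing hypothesis (A) by the explicit unimodality of $\sigma\mapsto pE\sigma^{p-q}-\lambda qA-rB\sigma^{r-q}$, identifying $t$ and $s$ with the two zeros, and passing to the limit in $H_\lambda(t^+(v_n)v_n)=0$ to contradict $g(t)<0$ is a legitimate (indeed more concrete) substitute for the paper's abstract condition, and the use of $i_\lambda>0$ to control $s(v_n)$ is the right affine analogue of \eqref{contradineq}. The paper itself offers no written proof beyond ``adapt Lemmas~\ref{k1}, \ref{k3} and Proposition~\ref{k4}'', so up to this point you are doing exactly what the authors intend, only with the homogeneous structure made explicit.

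However, the proposal is not a proof: you explicitly leave open the case $v=0$ (together with the subcase $t_n\to 0$, which you only declare ``subsumed''), and this is precisely the step where the adaptation of Lemma~\ref{k3} is nontrivial. In the abstract lemma that case is killed by two facts with no affine counterpart: the Nehari level there is $-\alpha c>0$, so Proposition~\ref{p2} keeps $t_c^+(v)$, hence $t_n$, away from zero on all of $\mathcal{S}$; and $P'(u)u\ge(\beta-\alpha)\|u\|^{\beta}$, so along $v_n\rightharpoonup 0$ with $\|v_n\|=1$ the coercive part cannot vanish while $Q'(t_nv_n)v_n\to 0$ does, giving the contradiction. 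For $\Phi_{\cal A}^\lambda$ the Nehari level is $0$ and ${\cal E}^p_{p,\Omega}$ is not bounded below by the norm, so $H_\lambda'(t_nv_n)v_n\to 0$ with $v_n\rightharpoonup 0$ merely forces $t_n^{p-1}{\cal E}^p_{p,\Omega}(v_n)\to 0$, and one must still rule this out using the quantitative information available ($i_\lambda>0$, i.e.\ ${\cal E}^p_{p,\Omega}(v_n)^{\frac{r-q}{r-p}}\gtrsim \lambda\|v_n\|_q^q\|v_n\|_r^{r\frac{p-q}{r-p}}$, the boundedness of $\mathcal{N}_\lambda^+$, and in the regime $q>p\left(1-\frac1N\right)$ the lower bound of \cite[Theorem 9]{HJM} used in Proposition~\ref{pa3}). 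You name these ingredients but do not combine them into an argument; since you yourself call this ``the only genuinely new ingredient'' and ``where the whole difficulty of the lemma is concentrated'', the submission establishes the routine part and omits the essential one. To complete it you must either close the $v=0$ (and $t_n\to 0$) case with such a quantitative non-degeneracy argument, or show that under the standing assumptions no sequence $(v_n)\subset\mathcal{S}$, $t_n\in[t^+(v_n),t^+(v_n)+\varepsilon]$ with $H_\lambda'(t_nv_n)v_n\to 0$ can be weakly null at all.
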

\begin{proposition} If $\lambda\in(0,\Lambda_{\mathcal{A}})$ then there exists $\delta>0$ such that $\Phi_{\cal A}^\lambda(t^+(v)v)+\delta<\Phi_{\cal A}^\lambda(s(v)v)$ for any $v\in \mathcal{S}$.
\end{proposition}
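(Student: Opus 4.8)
The plan is to adapt the proof of Proposition~\ref{k4}, with $\Phi_{\cal A}^\lambda$ playing the role of $\lambda_c$ and $H_\lambda$ that of $H(\cdot)+\alpha c$. First I would record the fibering identity: for fixed $v\in\mathcal{S}$ and $t>0$,
$$\frac{d}{dt}\Phi_{\cal A}^\lambda(tv)=(\Phi_{\cal A}^\lambda)'(tv)v=\frac1t\,(\Phi_{\cal A}^\lambda)'(tv)(tv)=\frac{H_\lambda(tv)}{t},$$
which one also sees by differentiating $\Phi_{\cal A}^\lambda(tv)=\frac{t^p}{p}\mathcal{E}_{p,\Omega}^p(v)-\frac{\lambda t^q}{q}\|v\|_q^q-\frac{t^r}{r}\|v\|_r^r$. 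Integrating from $t^+(v)$ to $s(v)$ gives
$$\Phi_{\cal A}^\lambda(s(v)v)-\Phi_{\cal A}^\lambda(t^+(v)v)=\int_{t^+(v)}^{s(v)}\frac{H_\lambda(tv)}{t}\,dt,$$
and since $H_\lambda\in C^1(W_0^{1,p}(\Omega)\setminus\{0\})$ and $t^+(v)$ is a zero of $t\mapsto H_\lambda(tv)$, the fundamental theorem of calculus yields $H_\lambda(tv)=\int_{t^+(v)}^{t}H_\lambda'(sv)v\,ds$ for $t\ge t^+(v)$.

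Next I would invoke the two lemmas of this remark. Lemma~\ref{k11} provides $\varepsilon_0>0$ with $s(v)-t^+(v)\ge\varepsilon_0$ for every $v\in\mathcal{S}$; setting $\varepsilon:=\varepsilon_0/2$ (so that $t^+(v)+2\varepsilon\le s(v)$), the lemma immediately preceding this proposition gives $d>0$ such that $H_\lambda'(tv)v>d$ for all $v\in\mathcal{S}$ and $t\in[t^+(v),t^+(v)+\varepsilon]$. Hence $H_\lambda(tv)\ge d\,(t-t^+(v))$ on that interval. Moreover $H_\lambda(tv)>0$ on $[t^+(v)+\varepsilon,s(v)]$, since this segment lies in $(t^+(v),t^-(v))$, where $t\mapsto H_\lambda(tv)$ has no zero and attains the positive value $H_\lambda(s(v)v)\ge i_\lambda>0$. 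Discarding this nonnegative tail,
$$\Phi_{\cal A}^\lambda(s(v)v)-\Phi_{\cal A}^\lambda(t^+(v)v)\ge\int_{t^+(v)}^{t^+(v)+\varepsilon}\frac{H_\lambda(tv)}{t}\,dt\ge d\int_{t^+(v)}^{t^+(v)+\varepsilon}\frac{t-t^+(v)}{t}\,dt.$$

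To make the last expression uniformly positive in $v$ I would use the standing assumption that $\mathcal{N}_{\lambda}^{+}$ is bounded (valid by Proposition~\ref{pa3} when $q>(1-\tfrac1N)p$): there is $C>0$ with $t^+(v)=\|t^+(v)v\|\le C$ for all $v\in\mathcal{S}$, so that $t\le C+\varepsilon$ and $\tfrac1t\ge\tfrac1{C+\varepsilon}$ on $[t^+(v),t^+(v)+\varepsilon]$. Therefore
$$\Phi_{\cal A}^\lambda(s(v)v)-\Phi_{\cal A}^\lambda(t^+(v)v)\ge\frac{d}{C+\varepsilon}\int_{t^+(v)}^{t^+(v)+\varepsilon}(t-t^+(v))\,dt=\frac{d\,\varepsilon^2}{2(C+\varepsilon)}=:\delta>0,$$
which is independent of $v$, and the proposition follows.

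I do not expect the computation above to be the difficulty — it is essentially that of Proposition~\ref{k4}. The work lies in its inputs: one must justify that the analogues of Lemmas~\ref{k1} and~\ref{k3}, stated here as Lemma~\ref{k11} and the lemma preceding this proposition, genuinely hold for $P(u)=\mathcal{E}_{p,\Omega}^p(u)$, $Q_\lambda(u)=-\lambda\|u\|_q^q-\|u\|_r^r$. This is the main obstacle: the affine $p$-energy is not of the convenient split form $\frac1\beta\|\cdot\|^\beta+A(\cdot)$ used in Subsection~\ref{ineq}, so the uniqueness/monotonicity ingredient (condition~(A)) and the compactness steps have to be redone using the $p$-homogeneity, weak lower semicontinuity and $C^1$-regularity away from $0$ of $\mathcal{E}_{p,\Omega}$ together with the affine Rellich-Kondrachov theorem, and the boundedness of $\mathcal{N}_{\lambda}^{+}$ has to be in hand (Proposition~\ref{pa3}). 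Once these are in place the argument closes exactly as above.
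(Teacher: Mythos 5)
Your proof is correct and follows essentially the same route as the paper, which obtains this proposition by adapting Proposition \ref{k4}: integrate $\frac{d}{dt}\Phi_{\cal A}^\lambda(tv)=H_\lambda(tv)/t$ from $t^+(v)$ to $s(v)$, use Lemma \ref{k11} and the preceding lemma to bound $H_\lambda(tv)\ge d\,(t-t^+(v))$ on $[t^+(v),t^+(v)+\varepsilon]$, discard the nonnegative tail, and use the standing boundedness of $\mathcal{N}_\lambda^+$ to control the weight $1/t$ uniformly. The only cosmetic point is to take $\varepsilon$ strictly smaller than $\varepsilon_0/2$ (e.g. $\varepsilon_0/3$) so that $t^+(v)+2\varepsilon<s(v)$ holds strictly, as required in the hypothesis of the lemma you invoke.
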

Combining the previous results with Proposition \ref{pa3} we derive the following:
\begin{theorem}\label{ta3} Assume that $p>q>\left(1-\frac{1}{N}\right)p$ and $\lambda\in(0,\Lambda_{\mathcal{A}})$. Then $\Phi_{\cal A}^\lambda(u_{\lambda,n})< \Phi_{\cal A}^\lambda(v_{\lambda,n})$ for every $n$.
\end{theorem}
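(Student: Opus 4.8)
The plan is to reduce $\Phi_{\cal A}^\lambda(u_{\lambda,n})<\Phi_{\cal A}^\lambda(v_{\lambda,n})$ to a \emph{uniform} gap between the fibering values at $t^+(v)$ and at $t^-(v)$, exactly in the spirit of the theorem closing Subsection \ref{ineq}. The starting point is to use the hypothesis $p>q>\left(1-\frac1N\right)p$: by Proposition \ref{pa3} it guarantees that $\mathcal{N}_{\lambda}^{+}$ is bounded, which is precisely the ingredient needed to invoke the three statements collected in the Remark preceding Theorem \ref{ta3}. In particular the Proposition in that Remark yields $\delta>0$ such that
\begin{equation*}
\Phi_{\cal A}^\lambda(t^+(v)v)+\delta<\Phi_{\cal A}^\lambda(s(v)v)\qquad\text{for every }v\in\mathcal{S}.
\end{equation*}

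Next I would compare $\Phi_{\cal A}^\lambda(s(v)v)$ with $\Phi_{\cal A}^\lambda(t^-(v)v)$. Since $\frac{d}{dt}\Phi_{\cal A}^\lambda(tv)=\frac1t H_\lambda(tv)$ and, for $\lambda\in(0,\Lambda_{\cal A})$, the map $t\mapsto H_\lambda(tv)$ vanishes exactly at $t^+(v)$ and $t^-(v)$ while attaining the positive value $H_\lambda(s(v)v)\ge i_\lambda>0$ at its unique interior local maximiser $s(v)$, one concludes (a third zero being excluded) that $H_\lambda(tv)>0$ on $(t^+(v),t^-(v))$; hence $t\mapsto\Phi_{\cal A}^\lambda(tv)$ is strictly increasing on $[t^+(v),t^-(v)]$. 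As $t^+(v)<s(v)<t^-(v)$, this gives $\Phi_{\cal A}^\lambda(s(v)v)\le\Phi_{\cal A}^\lambda(t^-(v)v)$, and combining with the previous display yields
\begin{equation*}
\Phi_{\cal A}^\lambda(t^+(v)v)+\delta<\Phi_{\cal A}^\lambda(t^-(v)v)\qquad\text{for every }v\in\mathcal{S}.
\end{equation*}

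Finally I would pass to the minimax levels. Writing $\Psi^{\pm}(v):=\Phi_{\cal A}^\lambda(t^{\pm}(v)v)$, the bound above gives $\sup_{F}\Psi^+\le\sup_{F}\Psi^--\delta$ for every $F\in\mathcal{F}_n$, hence $\lambda_n^+\le\lambda_n^--\delta<\lambda_n^-$ after taking the infimum over $F$. Since, by the construction in Theorem \ref{tn} applied to $\Phi_{\cal A}^\lambda$ in Theorem \ref{taf}, the critical points $u_{\lambda,n}$ and $v_{\lambda,n}$ realize these levels, i.e. $\Phi_{\cal A}^\lambda(u_{\lambda,n})=\lambda_n^+$ and $\Phi_{\cal A}^\lambda(v_{\lambda,n})=\lambda_n^-$, the conclusion follows. (Alternatively, inserting the intermediate level $\lambda_n:=\inf_{F\in\mathcal{F}_n}\sup_{v\in F}\Phi_{\cal A}^\lambda(s(v)v)$ one obtains the sharper $\lambda_n^+<\lambda_n\le\lambda_n^-$, paralleling the theorem in Subsection \ref{ineq}.)

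The only genuinely delicate point is the uniformity of $\delta$ over $\mathcal{S}$: a priori $s(v)-t^+(v)$ could collapse to $0$ along a sequence $v_k\rightharpoonup 0$, and ruling this out is exactly what the boundedness of $\mathcal{N}_{\lambda}^{+}$ — hence the restriction $q>\left(1-\frac1N\right)p$ — provides, making Lemma \ref{k11} and its successors (adapted from Lemmas \ref{k1}, \ref{k3} and Proposition \ref{k4}) go through. Granting those, the remainder is a routine assembly of facts already established, so I do not anticipate any further obstruction.
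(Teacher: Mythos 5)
Your proposal is correct and follows essentially the same route as the paper: use Proposition \ref{pa3} (valid precisely because $q>\left(1-\frac{1}{N}\right)p$) to get the boundedness of $\mathcal{N}_{\lambda}^{+}$, invoke the adapted Lemmas and Proposition of the Remark to obtain the uniform gap $\Phi_{\cal A}^\lambda(t^+(v)v)+\delta<\Phi_{\cal A}^\lambda(s(v)v)$, compare with $\Phi_{\cal A}^\lambda(t^-(v)v)$ via the monotonicity of the fibering map on $(t^+(v),t^-(v))$, and pass to the minimax levels as in the theorem of Subsection \ref{ineq}. Your final assembly, including the intermediate level $\lambda_n$, matches the paper's intended argument.
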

\end{remark}

\begin{proof}[Proof of Theorem \ref{taf}]
	By Propositions \ref{pa1} and \ref{pa2}, and Theorem \ref{tn}, we know that 
	$\displaystyle \inf_{F\in \mathcal{F}_n}\sup_{u\in F}\Phi_{\cal A}^\lambda(t^{\pm}(u)u)$ are two nondecreasing sequences of critical values of $\Phi_{\cal A}^\lambda$ for $0<\lambda<\Lambda_{\cal A}$. Thus for such values of $\lambda$ there exist two sequences $(u_{\lambda,n}),(v_{\lambda,n})$ of critical points of $\Phi_{\cal A}^\lambda$  with $(u_{\lambda,n}) \subset \mathcal{N}_\lambda^+$ and $(v_{\lambda,n}) \subset \mathcal{N}_\lambda^-$.
	Let us prove the energy properties of $(u_{\lambda,n})$ and $(v_{\lambda,n})$:
	\begin{enumerate}
\item  It is clear that $\Phi_{\cal A}^\lambda(u_{\lambda,n})\leq \Phi_{\cal A}^\lambda(v_{\lambda,n})$ and, since $\Phi_{\cal A}^\lambda<0$ in $\mathcal{N}_\lambda^+$, that $\Phi_{\cal A}^\lambda(u_{\lambda,n})<0$ for every $n$.
Moreover, $\Phi_{\cal A}^\lambda$ satisfies the Palais-Smale condition on $\mathcal{N}_\lambda^-$ at any level, so $\Phi_{\cal A}^\lambda(v_{\lambda,n}) \to \infty$. We also see that $\Phi_{\cal A}^\lambda(u_{\lambda,n}) \to 0^-$ as $n\to \infty$  by noting that $-\left(\Phi_{\cal A}^\lambda\right)^{-1}$ satisfies the Palais-Smale condition at any positive level (since $\Phi_{\cal A}^\lambda$ does so at any negative level). 
\item The inequality $\Phi_{\cal A}^\lambda(u_{\lambda,1})<\Phi_{\cal A}^\lambda(v_{\lambda,1})$ follows from the fact that these values correspond to $\min_{\mathcal{N}_\lambda^+}  \Phi_{\cal A}^\lambda$ and $\min_{\mathcal{N}_\lambda^-}  \Phi_{\cal A}^\lambda$, respectively. In addition, it is clear that $\Phi_{\cal A}^\lambda(v_{\lambda,1})>0$ for $\lambda$ close to $0$ and $\Phi_{\cal A}^\lambda(v_{\lambda,1})<0$ for $\lambda$ close to $\Lambda_{\cal{A}}$. The continuous and decreasing behavior of $\lambda \mapsto \Phi_{\cal A}^\lambda(v_{\lambda,1})$ completes the picture. 
	\item  Proposition \ref{pa3} yields that $\sup_{n \in \mathbb{N}} \|u_{\lambda,n}\| \to 0$ as $\lambda \to 0$, whereas Theorem \ref{ta3} yields that $\Phi_{\cal A}^\lambda(u_{\lambda,n})< \Phi_{\cal A}^\lambda(v_{\lambda,n})$ for every $n$ and $0<\lambda<\Lambda_{\cal A}$. Finally, since $\mathcal{N}_\lambda^+$ is bounded we know that $(u_{\lambda,n})$ is bounded, so we may assume that $u_{\lambda,n} \rightharpoonup u_\lambda$ in $W^{1,p}_0(\Omega)$. If $u_\lambda \not \equiv 0$ then we have $u_{\lambda,n} \to u_\lambda$ in $W^{1,p}_0(\Omega)$, and consequently $\left(\Phi_{\cal A}^\lambda\right)'(u_\lambda)=0$ and $\Phi_{\cal A}^\lambda(u_\lambda)=0$, which contradicts $u_\lambda \in \mathcal{N}_\lambda^+$. Thus $u_\lambda \equiv 0$. From the proof of Proposition \ref{pa3} we know that $\Vert u\Vert\leq C\lambda \Vert u\Vert_{p}^{N(\frac{q}{p}-1)+1}$, and therefore $u_{\lambda,n} \to 0$ in $W^{1,p}_0(\Omega)$.
\end{enumerate}	
\end{proof}
\medskip
\appendix
\medskip

\section{A class of functionals with homogeneous terms}

In the sequel we consider 
\begin{equation}\label{egf}
\Phi_\lambda(u):=\frac{1}{\eta}E(u)-\frac{\lambda}{\alpha}A(u)-\frac{1}{\beta} B(u),
\end{equation} where $\lambda>0$ and
$A,B,E \in C^1(X \setminus \{0\})$ are positive and $\alpha,\beta,\eta$-homogeneous respectively, with $\alpha<\eta<\beta$, $\alpha,\eta,\beta \neq 0$.
In particular $\alpha$, $\eta$ or $\beta$ can be negative. 

\begin{proposition}\label{p1}
	Under the previous conditions, let $u \in X \setminus \{0\}$. Then there exists $\lambda(u)>0$ such that:
	\begin{enumerate}
		\item For $0<\lambda <\lambda(u)$ the map $t \mapsto \Phi_\lambda(tu)$ has exactly two critical points $t^+(u)<t^-(u)$, with $t^+(u)$ a local minimum point and $t^-(u)$ a local maximum point.
		\item For $\lambda=\lambda(u)$ the map $t \mapsto \Phi_\lambda(tu)$ has exactly one critical point, which is degenerate.
		\item For $\lambda>\lambda(u)$ the map $t \mapsto \Phi_\lambda(tu)$ has no critical point.
	\end{enumerate}
\end{proposition}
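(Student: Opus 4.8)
The plan is to reduce everything to an elementary one‑variable analysis of the fibering map. Fix $u\in X\setminus\{0\}$ and set $a:=A(u)>0$, $b:=B(u)>0$, $e:=E(u)>0$. By the homogeneity of $A,B,E$,
\[
g(t):=\Phi_\lambda(tu)=\frac{e}{\eta}t^{\eta}-\frac{\lambda a}{\alpha}t^{\alpha}-\frac{b}{\beta}t^{\beta},\qquad t>0,
\]
so that $g'(t)=e t^{\eta-1}-\lambda a\,t^{\alpha-1}-b\,t^{\beta-1}=t^{\alpha-1}\bigl(\varphi(t)-\lambda a\bigr)$, where
\[
\varphi(t):=e\,t^{\eta-\alpha}-b\,t^{\beta-\alpha},\qquad t>0.
\]
Since $t^{\alpha-1}>0$ for $t>0$ (whatever the sign of $\alpha$), the critical points of $g$ are precisely the solutions of $\varphi(t)=\lambda a$; note that only the inequalities $\alpha<\eta<\beta$ have been used, not the signs of $\alpha,\eta,\beta$.

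Next I would determine the shape of the graph of $\varphi$. Put $\mu:=\eta-\alpha>0$ and $\nu:=\beta-\alpha>\mu>0$. Then $\varphi(0^{+})=0$, $\varphi(t)\to-\infty$ as $t\to\infty$, and $\varphi'(t)=t^{\mu-1}\bigl(e\mu-b\nu\,t^{\nu-\mu}\bigr)$ vanishes only at $t_\ast:=\bigl(e\mu/(b\nu)\bigr)^{1/(\nu-\mu)}$, being positive on $(0,t_\ast)$ and negative on $(t_\ast,\infty)$. Hence $\varphi$ is strictly increasing on $(0,t_\ast)$, strictly decreasing on $(t_\ast,\infty)$, and attains a strict global maximum $M(u):=\varphi(t_\ast)>0$ at $t_\ast$ (positivity being immediate since $\varphi>0$ near $0$). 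Setting $\lambda(u):=M(u)/A(u)>0$, the horizontal line at height $\lambda a$ meets the graph of $\varphi$ in exactly two points $t^{+}(u)<t_\ast<t^{-}(u)$ when $\lambda a<M(u)$, in the single point $t_\ast$ when $\lambda a=M(u)$, and in no point when $\lambda a>M(u)$; since $a=A(u)>0$ these three cases are exactly $\lambda<\lambda(u)$, $\lambda=\lambda(u)$, $\lambda>\lambda(u)$. This settles the number of critical points of $g$ in all three regimes.

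Finally I would read off the nature of the critical points from the sign of $g'$, which equals the sign of $\varphi(t)-\lambda a$. Since $\varphi$ increases across $t^{+}(u)$ and decreases across $t^{-}(u)$, $g'$ changes sign from $-$ to $+$ at $t^{+}(u)$ and from $+$ to $-$ at $t^{-}(u)$, so $t^{+}(u)$ is a local minimum point and $t^{-}(u)$ a local maximum point of $t\mapsto\Phi_\lambda(tu)$. For $\lambda=\lambda(u)$ the unique critical point is $t_\ast$, and differentiating $g'(t)=t^{\alpha-1}\bigl(\varphi(t)-\lambda a\bigr)$ gives
\[
g''(t_\ast)=(\alpha-1)t_\ast^{\alpha-2}\bigl(\varphi(t_\ast)-\lambda a\bigr)+t_\ast^{\alpha-1}\varphi'(t_\ast)=0,
\]
because both $\varphi(t_\ast)-\lambda a=0$ and $\varphi'(t_\ast)=0$; hence this critical point is degenerate. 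There is no serious obstacle in this argument; the only points needing a little care are the bookkeeping of exponents when $\alpha,\eta,\beta$ are allowed to be negative and the (immediate) fact that the interior maximum value $M(u)$ of $\varphi$ is strictly positive.
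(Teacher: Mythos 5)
Your argument is correct and is essentially the paper's proof: both reduce to an elementary one-variable analysis of $\varphi_{u,\lambda}(t)=\Phi_\lambda(tu)$ using the homogeneity of $A,B,E$, and your threshold $\lambda(u)=M(u)/A(u)$ coincides with the explicit value \eqref{dl} obtained in the paper (your $t_\ast$ is the paper's $t_0(u)$). The only cosmetic difference is that you factor $g'(t)=t^{\alpha-1}\bigl(\varphi(t)-\lambda A(u)\bigr)$ and count intersections with the level $\lambda A(u)$ of the $\lambda$-independent function $\varphi$, whereas the paper tracks $\max_{t>0}\varphi_{u,\lambda}'(t)$ as a decreasing function of $\lambda$ and solves $\varphi_{u,\lambda}'=\varphi_{u,\lambda}''=0$; the content is the same.
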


\begin{proof}
	Let $\varphi_{u,\lambda}(t)=\Phi_\lambda(tu)$. Then $\varphi_{u,\lambda}'(t)=t^{\eta-1} E(u)-\lambda t^{\alpha-1} A(u)-t^{\beta-1} B(u)$. Since $\alpha<\eta<\beta$ and $A(u),B(u)>0$ we see that $\varphi_{u,\lambda}'(t)<0$ for $t$ small enough and for $t$ large enough. So the existence of critical points of $\varphi_{u,\lambda}$ is equivalent to the condition $\displaystyle \max \varphi_{u,\lambda}' \geq 0$. We also see that for $\lambda>0$ small enough we have $\displaystyle \max_{t>0} \varphi_{u,\lambda}'(t) >0$ and since $\lambda \mapsto \varphi_{u,\lambda}'(t)$ is decreasing there exists $\lambda(u)>0$  such that $\displaystyle \max \varphi_{u,\lambda(u)}'=0$. It turns out that $\displaystyle \max_{t>0} \varphi_{u,\lambda(u)}'(t) =\varphi_{u,\lambda(u)}'(t_0(u))=0$ if, and only if, the couple $(\lambda(u),t_0(u))$ solves 
	$\varphi_{u,\lambda(u)}'(t_0(u))=\varphi_{u,\lambda(u)}''(t_0(u))=0$, i.e. $t_0(u)=\left(\frac{\eta-\alpha}{\beta-\alpha} \frac{E(u)}{B(u)}\right)^{\frac{1}{\beta-\eta}}$ and  \begin{equation}\label{dl} \lambda(u):=C(\alpha,\eta,\beta) \frac{E(u)^{\frac{\beta-\alpha}{\beta-\eta}}}{A(u)B(u)^{\frac{\eta-\alpha}{\beta-\eta}}},
	\end{equation}
	where $C(\alpha,\eta,\beta):=\frac{\beta-\eta}{\beta-\alpha} \left(\frac{\eta-\alpha}{\beta-\alpha}\right)^{\frac{\eta-\alpha}{\beta-\eta}}$.
\end{proof}

Next we set $\Lambda:=\displaystyle \inf_{X \setminus \{0\}} \lambda(u)$
and assume that 

\begin{enumerate}
	\item[(H3)] There exists $C>0$ such that  $B(u)\leq C E(u)^{\frac{\beta}{\eta}}$ and $E(u) \leq C\|u\|^{\eta}$ for any $u \in X \setminus \{0\}$.
\end{enumerate}

\begin{corollary}\label{c0}
	Assume (H3). If $0<\lambda<\Lambda$ then $\Phi_\lambda$ satisfies (H1).
\end{corollary}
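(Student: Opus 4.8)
My plan is to get the first statement in (H1) essentially for free from Proposition \ref{p1} and then spend all the effort on the boundedness conditions (1)--(2), which I would deduce from a one-variable analysis. Since $\Lambda=\inf_{X\setminus\{0\}}\lambda(u)$, the hypothesis $0<\lambda<\Lambda$ forces $\lambda<\lambda(u)$ for every $u\in X\setminus\{0\}$, so Proposition \ref{p1}(1) immediately gives that $t\mapsto\Phi_\lambda(tu)$ has exactly two critical points $t^+(u)<t^-(u)$, with $t^+(u)$ a local minimum point and $t^-(u)$ a local maximum point. What remains is to verify the boundedness assertions. As in the proof of Proposition \ref{p1}, I would factor out $t^{\alpha-1}$ and observe that these critical points are precisely the two solutions of $h_u(t)=\lambda A(u)$, where $h_u(t):=t^{\eta-\alpha}E(u)-t^{\beta-\alpha}B(u)$. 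Since $0<\eta-\alpha<\beta-\alpha$, the map $h_u$ increases from $0$ to its unique maximum, attained at $t_0(u)=\bigl(\tfrac{\eta-\alpha}{\beta-\alpha}\,\tfrac{E(u)}{B(u)}\bigr)^{1/(\beta-\eta)}$ (the quantity called $t_0(u)$ in the proof of Proposition \ref{p1}), and then decreases to $-\infty$; hence $t^+(u)<t_0(u)<t^-(u)$.

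Next I would read the bounds off $h_u$ by inserting $t^\pm(u)$. Fix a compact $K\subset\mathcal{S}$. Continuity and positivity of $A,B,E$ give constants with $0<a\le A(u)$, $0<b\le B(u)$ and $E(u)\le e'$ for $u\in K$, while (H3) gives $E(u)\le C$ for all $u\in\mathcal{S}$. From $t^+(u)^{\eta-\alpha}E(u)=\lambda A(u)+t^+(u)^{\beta-\alpha}B(u)\ge\lambda A(u)$ we get $t^+(u)\ge(\lambda a/e')^{1/(\eta-\alpha)}>0$ on $K$; from $t^-(u)^{\eta-\alpha}E(u)-t^-(u)^{\beta-\alpha}B(u)=\lambda A(u)>0$ we get $t^-(u)^{\beta-\eta}<E(u)/B(u)\le e'/b$ on $K$; and from $t_0(u)^{\beta-\eta}=\tfrac{\eta-\alpha}{\beta-\alpha}\,E(u)/B(u)\le\tfrac{\eta-\alpha}{\beta-\alpha}\,(e'/b)$ we bound $t^+(u)<t_0(u)$ from above on $K$. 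This already establishes condition (1) of (H1) together with the ``bounded from above on compact subsets'' half of condition (2).

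The step I expect to be the main obstacle is the remaining assertion of (2): that $t^-(u)$ is bounded away from zero \emph{uniformly on all of $\mathcal{S}$}, not just on compact subsets, the point being that $A$, $B$ and $E$ need not be bounded away from zero on the infinite-dimensional (hence noncompact) sphere $\mathcal{S}$. Since $t^-(u)>t_0(u)$, it suffices to bound $E(u)/B(u)$ below on $\mathcal{S}$, and this is exactly where the homogeneous comparison inequality of (H3) is essential: $B(u)\le C E(u)^{\beta/\eta}$ gives $E(u)/B(u)\ge C^{-1}E(u)^{1-\beta/\eta}$, and since $1-\beta/\eta<0$ (as $\alpha<\eta<\beta$, with $\eta>0$ in the applications we have in mind) and $E(u)\le C$ on $\mathcal{S}$ by the second part of (H3), it follows that $E(u)^{1-\beta/\eta}\ge C^{1-\beta/\eta}$, hence $E(u)/B(u)\ge C^{-\beta/\eta}>0$ for all $u\in\mathcal{S}$. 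Consequently $t^-(u)>t_0(u)\ge\bigl(\tfrac{\eta-\alpha}{\beta-\alpha}\,C^{-\beta/\eta}\bigr)^{1/(\beta-\eta)}>0$ on $\mathcal{S}$, which finishes the verification of (H1). Everything other than this uniform lower bound is a routine consequence of the analysis of the single-variable function $h_u$ already set up in Proposition \ref{p1}.
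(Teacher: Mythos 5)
Your proof is correct and follows essentially the same route as the paper: Proposition \ref{p1} supplies the two critical points, the fibering equation together with continuity and positivity of $A,B,E$ gives the bounds on compact subsets of $\mathcal{S}$, and the two inequalities in (H3) yield the uniform lower bound for $t^-$ on all of $\mathcal{S}$. The only cosmetic difference is that you bound $t^-(u)$ below via $t^-(u)>t_0(u)$, whereas the paper uses the equivalent second-order inequality $E(t^-(u)u)<\frac{\beta-\eta}{\eta-\alpha}B(t^-(u)u)$ (cf.\ Lemma \ref{NBH}); both reduce to the same lower bound on $E(u)/B(u)$, and your implicit assumption $\eta>0$ in that step is also implicit in the paper's argument.
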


\begin{proof}
	Let $0<\lambda<\Lambda$. By Proposition \ref{p1} we know that $t \mapsto \Phi_\lambda(tu)$ has exactly two critical points $t^+(u)<t^-(u)$, with $t^+(u)$ a local minimum point and $t^-(u)$ a local maximum point. Moreover, we have 
	$$t^{\pm}(u)^{\eta-\alpha} E(u) -\lambda A(u) - t^{\pm}(u)^{\beta-\alpha}B(u)=0 \quad \forall u \in \mathcal{S},$$ and since $A,B,E$ are continuous and positive on $\mathcal{S}$ we see that $u \mapsto t^{\pm}(u)$ are bounded (away from zero and from above) in any compact subset of $\mathcal{S}$. Finally, we have that $E(t^-(u)u)<\frac{\beta-\eta}{\eta-\alpha} B(t^-(u)u)$, so that
	$$E(u)<\frac{\beta-\eta}{\eta-\alpha} t^-(u)^{\beta-\eta}B(u)<Ct^-(u)^{\beta-\eta}E(u)^{\frac{\beta}{\eta}},$$
	i.e. $Ct^-(u)^{\beta-\eta}\geq E(u)^{\frac{\eta-\beta}{\eta}}$ for some $C>0$ and any $u \in X\setminus \{0\}$. Since $E(u) \leq C\|u\|^{\eta}$ and $\beta>\eta$ we infer that $t^-(u)$ is away from zero on $\mathcal{S}$.
\end{proof}

Let us deal with the following conditions:
\begin{enumerate}
	\item[(H4)] There exists $C>0$ such that $A(u)\leq C E(u)^{\frac{\alpha}{\eta}}$ for every $u \in X \setminus \{0\}$.
	\item[(H5)] There exists $C>0$ such that $A(u)\leq C B(u)^{\frac{\alpha}{\beta}}$ for every $u \in X \setminus \{0\}$.
\end{enumerate}

\begin{lemma}\label{l0}
	Assume (H3) and either that $\alpha>0$ and (H4) holds or $\alpha<0<\eta$ and (H5) holds. Then:
	\begin{enumerate}
		\item  $\Lambda>0$.
		\item $\Phi_\lambda$ is bounded from below on $\mathcal{N}_\lambda$. Moreover, if $(u_n) \subset \mathcal{N}_\lambda$ and $(\Phi_\lambda(u_n))$ is bounded then $(E(u_n))$ is bounded.
		\item $E$ is bounded away from zero on $\mathcal{N}_\lambda^-$.
	\end{enumerate}
\end{lemma}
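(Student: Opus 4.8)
The plan is to prove the three assertions of Lemma \ref{l0} in order, using Proposition \ref{p1} (which gives the explicit formula \eqref{dl} for $\lambda(u)$) together with the homogeneity of $A,B,E$ and the structural hypotheses (H3)--(H5).

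For part (1), I would start from the formula
$$\lambda(u)=C(\alpha,\eta,\beta) \frac{E(u)^{\frac{\beta-\alpha}{\beta-\eta}}}{A(u)B(u)^{\frac{\eta-\alpha}{\beta-\eta}}}$$
and bound $A(u)$ and $B(u)$ from above in terms of powers of $E(u)$, so that $\lambda(u)$ is bounded below by a positive constant times a power of $E(u)$ — and since that power turns out to be zero, $\Lambda>0$ follows. Concretely: (H3) gives $B(u)\le C E(u)^{\beta/\eta}$, so $B(u)^{(\eta-\alpha)/(\beta-\eta)}\le C E(u)^{\frac{\beta(\eta-\alpha)}{\eta(\beta-\eta)}}$. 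In the case $\alpha>0$ with (H4), $A(u)\le C E(u)^{\alpha/\eta}$; in the case $\alpha<0<\eta$ with (H5), $A(u)\le C B(u)^{\alpha/\beta}\le C E(u)^{\alpha/\eta}$ (again using (H3)), so in both cases $A(u)\le C E(u)^{\alpha/\eta}$. Plugging these in, the exponent of $E(u)$ in the denominator is $\frac{\alpha}{\eta}+\frac{\beta(\eta-\alpha)}{\eta(\beta-\eta)}=\frac{\beta-\alpha}{\beta-\eta}$, which exactly cancels the numerator exponent, giving $\lambda(u)\ge c>0$ for all $u$, hence $\Lambda\ge c>0$.

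For part (2), I would use that any $u\in\mathcal{N}_\lambda$ satisfies the fibering equation $E(u)=\lambda A(u)+B(u)$ (i.e. $\varphi_{u,\lambda}'(1)=0$ after rescaling). Then I would express $\Phi_\lambda(u)=\left(\frac1\eta-\frac1\beta\right)E(u)-\lambda\left(\frac1\alpha-\frac1\beta\right)A(u)$ (eliminating $B$ via the constraint) and bound $A(u)$ by a power of $E(u)$ strictly less than one, using (H4) (if $\alpha>0$, noting $\alpha/\eta<1$) or via (H5)+(H3) (if $\alpha<0$, in which case $-\lambda(\frac1\alpha-\frac1\beta)A(u)\ge 0$ anyway). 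A Young-type inequality then shows $\Phi_\lambda(u)\ge c_1 E(u)-c_2 E(u)^{\alpha/\eta}-c_3\ge -C$, giving the lower bound; the same computation shows that if $\Phi_\lambda(u_n)$ is bounded above then $E(u_n)$ cannot go to infinity, hence $(E(u_n))$ is bounded.

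For part (3), I would use that on $\mathcal{N}_\lambda^-$ the point $t^-(u)$ is a local maximiser, equivalently $\varphi_{u,\lambda}''(t^-(u))\le 0$, which by the computation in the proof of Corollary \ref{c0} forces $E(v)\le\frac{\beta-\eta}{\eta-\alpha}t^-(v)^{\beta-\eta}B(v)$ for $v\in\mathcal S$ (when $\eta>\alpha$; if $\eta<\alpha$ one reverses the inequality and argues with (H3) directly) and then, combining with (H3) ($B(v)\le CE(v)^{\beta/\eta}$), one gets $Ct^-(v)^{\beta-\eta}\ge E(v)^{(\eta-\beta)/\eta}$; since $t^-(v)$ is bounded above on $\mathcal S$ and $\beta>\eta$, this yields $E(v)\ge c>0$ on $\mathcal S$, and by $\eta$-homogeneity $E$ is bounded away from zero on all of $\mathcal{N}_\lambda^-$. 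The main obstacle throughout is bookkeeping the signs of the exponents $\alpha,\eta,\beta$ — since $\alpha$ (and in principle $\eta$) may be negative, several inequalities flip direction, and one must carefully split into the two cases singled out in the hypothesis ($\alpha>0$ with (H4), versus $\alpha<0<\eta$ with (H5)) and check that the crucial cancellation of exponents giving $\Lambda>0$ is robust in both; the analytic content is otherwise routine.
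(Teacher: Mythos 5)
Your part (2) is fine and matches the paper's argument (eliminate $B$ via the constraint $E(u)=\lambda A(u)+B(u)$, then use (H4) when $\alpha>0$ and positivity of the $A$-term when $\alpha<0$). The other two parts, however, each contain a genuine error in exactly the sign-sensitive places you flagged. In part (1), case $\alpha<0<\eta$, the chain $A(u)\le C B(u)^{\alpha/\beta}\le C E(u)^{\alpha/\eta}$ is wrong: since $\alpha/\beta<0$, raising the (H3) inequality $B(u)\le CE(u)^{\beta/\eta}$ to the power $\alpha/\beta$ \emph{reverses} it, giving $B(u)^{\alpha/\beta}\ge c\,E(u)^{\alpha/\eta}$, so the bound $A(u)\le CE(u)^{\alpha/\eta}$ does not follow (and there is no reason for it to hold under (H3)+(H5)). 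The conclusion is still reachable, but you must cancel against $B$ rather than against $E$: by (H5),
$$A(u)B(u)^{\frac{\eta-\alpha}{\beta-\eta}}\le C\,B(u)^{\frac{\alpha}{\beta}+\frac{\eta-\alpha}{\beta-\eta}}=C\,B(u)^{\frac{\eta(\beta-\alpha)}{\beta(\beta-\eta)}},$$
so $\lambda(u)\ge \frac1C\bigl(E(u)/B(u)^{\eta/\beta}\bigr)^{\frac{\beta-\alpha}{\beta-\eta}}$, and (H3) (now raised to the \emph{positive} power $\eta/\beta$) gives $B(u)^{\eta/\beta}\le CE(u)$, whence $\Lambda>0$. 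This is precisely the paper's computation.

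In part (3) your argument is built on the claim that $t^-(v)$ is bounded from above on all of $\mathcal S$. That is not available: under (H1)/Corollary \ref{c0} one only knows boundedness of $t^-$ on \emph{compact} subsets of $\mathcal S$, and in the motivating application (the affine $p$-energy) $t^-(v)$ is genuinely unbounded on $\mathcal S$, while the intermediate conclusion "$E(v)\ge c>0$ on $\mathcal S$" is false there (non-coercivity of ${\cal E}^p_{p,\Omega}$ means $E$ is not bounded away from zero on the unit sphere). The detour through $\mathcal S$ is unnecessary: working directly with $u\in\mathcal{N}_\lambda^-$, the local-maximiser condition gives $E(u)<\frac{\beta-\eta}{\eta-\alpha}B(u)$, and (H3) then yields $E(u)\le C E(u)^{\beta/\eta}$ with $\beta/\eta>1$, hence $E(u)\ge c>0$ on $\mathcal{N}_\lambda^-$ — this is the paper's one-line proof, and it is what your own inequality $C t^-(v)^{\beta-\eta}\ge E(v)^{(\eta-\beta)/\eta}$ actually encodes once you recombine $t^-(v)^{\eta}E(v)=E(t^-(v)v)$ instead of trying to bound the two factors separately. (Also note $\alpha<\eta$ is a standing assumption of the class \eqref{egf}, so the parenthetical case $\eta<\alpha$ you mention never occurs.)
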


\begin{proof}\strut
	\begin{enumerate}
		\item Note that $$\frac{E(u)^{\frac{\beta-\alpha}{\beta-\eta}}}{A(u)B(u)^{\frac{\eta-\alpha}{\beta-\eta}}}=\frac{E(u)^{\frac{\alpha}{\eta}}}{A(u)}\left(\frac{E(u)^{\frac{\beta}{\eta}}}{B(u)}\right)^{\frac{\eta-\alpha}{\beta-\eta}}.$$
		If $\alpha>0$ and (H4) holds then using (H3) we see that $\Lambda>0$. Now, if $\alpha<0<\eta$ and (H5) holds then 
		$$\frac{E(u)^{\frac{\beta-\alpha}{\beta-\eta}}}{A(u)B(u)^{\frac{\eta-\alpha}{\beta-\eta}}}\geq \frac{1}{C} \left(\frac{E(u)}{B(u)^{\frac{\eta}{\beta}}}\right)^{\frac{\beta-\alpha}{\beta-\eta}}.$$
		Since $\beta>\eta>0$ we have, from (H3), $E(u) \geq C B(u)^{\frac{\eta}{\beta}}$ for some $C>0$ and any $u \in X\setminus \{0\}$, which yields the desired inequality.\\
		
		\item Indeed, since $\mathcal{N}_\lambda=\{u \in X \setminus \{0\}: E(u)-\lambda A(u) - B(u)=0\}$
		we have $$\Phi_\lambda(u)=\frac{\beta-\eta}{\beta \eta} E(u) -\lambda \frac{\beta-\alpha}{\alpha \beta} A(u) \quad \mbox{for } u \in \mathcal{N}_\lambda.$$
		If $\alpha>0$ and (H4) holds then $\Phi_\lambda(u) \geq C_1 E(u)-C_2 E(u)^{\frac{\alpha}{\eta}}$ for some $C_1,C_2>0$ and any $u \in \mathcal{N}_\lambda$.
		Since $E$ is positive and $\alpha<\eta$ we infer the desired conclusion.
		Now, if $\alpha<0<\eta$ and (H5) holds then $\Phi_\lambda(u)>\frac{\beta-\eta}{\beta \eta} E(u)$, and the conclusion follows.\\
		
		
		\item If $u \in \mathcal{N}_\lambda^-$ then, by (H3), we have that $$E(u)<\frac{\beta-\eta}{\eta-\alpha} B(u)\leq C E(u)^{\frac{\beta}{\eta}},$$
		which yields the conclusion.
	\end{enumerate}
\end{proof}


\begin{lemma}\label{NBH} For any $u\in X\setminus\{0\}$ there holds:
 $$t^+(u)<\left(\lambda\frac{\eta-\alpha}{\eta-\beta}\frac{A(u)}{E(u)}\right)^{\frac{1}{\eta-\alpha}} \quad \mbox{and} \quad
		 t^-(u)>\left(\frac{\beta-\alpha}{\eta-\alpha}\frac{E(u)}{B(u)}\right)^{\frac{1}{\beta-\eta}}.$$
	
\end{lemma}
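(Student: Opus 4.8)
The plan is to reduce the statement to an elementary one-variable analysis of the fibering map, in the spirit of the proof of Proposition \ref{p1}. Fix $u\in X\setminus\{0\}$; since $t^{\pm}(u)$ are defined we are in the range $0<\lambda<\lambda(u)$. Writing $\varphi_{u,\lambda}(t):=\Phi_\lambda(tu)$ as in that proof and multiplying $\varphi_{u,\lambda}'(t)$ by the positive factor $t^{1-\alpha}$, which does not alter its sign, we see that the positive critical points of $\varphi_{u,\lambda}$ are exactly the zeros of
\[
\psi_u(t):=t^{\eta-\alpha}E(u)-\lambda A(u)-t^{\beta-\alpha}B(u),
\]
with $t^{+}(u)$ the zero at which $\psi_u$ changes sign from negative to positive and $t^{-}(u)$ the one at which it changes from positive to negative. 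The key observation is that $\psi_u$ differs from $g_u(t):=t^{\eta-\alpha}E(u)-t^{\beta-\alpha}B(u)$ only by the additive constant $-\lambda A(u)$, and hence has the same monotonicity as $g_u$.

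First I would record the shape of $g_u$. Since $\alpha<\eta<\beta$, the three exponent differences $\eta-\alpha$, $\beta-\eta$, $\beta-\alpha$ are all strictly positive, whatever the individual signs of $\alpha,\eta,\beta$; hence $g_u(0^+)=0$, $g_u(t)\to-\infty$ as $t\to\infty$, and a direct computation shows that $g_u'$ vanishes at the single point
\[
t_0(u):=\left(\frac{\eta-\alpha}{\beta-\alpha}\cdot\frac{E(u)}{B(u)}\right)^{\frac{1}{\beta-\eta}},
\]
$g_u$ being strictly increasing on $(0,t_0(u))$ and strictly decreasing on $(t_0(u),\infty)$; this $t_0(u)$ is precisely the degenerate critical abscissa occurring in the proof of Proposition \ref{p1}. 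Consequently $\psi_u(0^+)=-\lambda A(u)<0$, then $\psi_u$ increases to its maximum $\psi_u(t_0(u))=g_u(t_0(u))-\lambda A(u)$, which is strictly positive because $\lambda<\lambda(u)$, and then decreases to $-\infty$; therefore its two zeros satisfy
\[
t^{+}(u)<t_0(u)<t^{-}(u),
\]
and the right-hand inequality is already the asserted lower bound for $t^{-}(u)$.

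It then remains to sharpen the bound on $t^{+}(u)$. For $0<t<t_0(u)$ I would write $g_u(t)=t^{\eta-\alpha}\bigl(E(u)-t^{\beta-\eta}B(u)\bigr)$ and use $t^{\beta-\eta}<t_0(u)^{\beta-\eta}=\frac{\eta-\alpha}{\beta-\alpha}\cdot\frac{E(u)}{B(u)}$ to obtain the strict estimate $g_u(t)>\frac{\beta-\eta}{\beta-\alpha}\,t^{\eta-\alpha}E(u)$ on that interval (this is just the chord from the origin to the maximum of $g_u$, which lies below the graph since $g_u$ is concave as a function of $t^{\eta-\alpha}$). Evaluating at $t=t^{+}(u)<t_0(u)$ and using $g_u(t^{+}(u))=\lambda A(u)$ gives $\bigl(t^{+}(u)\bigr)^{\eta-\alpha}<\frac{\beta-\alpha}{\beta-\eta}\,\lambda\,\frac{A(u)}{E(u)}$; raising both sides to the power $\frac{1}{\eta-\alpha}>0$ yields the desired upper bound for $t^{+}(u)$.

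I do not anticipate a genuine obstacle: the argument is one-variable calculus resting on the structure of $\varphi_{u,\lambda}$ already established in Proposition \ref{p1}. The two points that call for a little care are (i) the sign bookkeeping of the differences $\eta-\alpha,\beta-\eta,\beta-\alpha$ (all positive), so that the powers involved are order-preserving and the behaviour of $g_u$ at $0$ and at $\infty$ is as claimed even when $\alpha,\eta,\beta$ are not themselves positive; and (ii) keeping the estimate on $g_u$ strict at $t=t^{+}(u)$, which is what forces the concluding inequalities to be strict.
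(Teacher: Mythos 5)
Your strategy---pass to $\psi_u(t)=t^{\eta-\alpha}E(u)-\lambda A(u)-t^{\beta-\alpha}B(u)$, note that it increases up to its unique maximizer $t_0(u)=\left(\frac{\eta-\alpha}{\beta-\alpha}\frac{E(u)}{B(u)}\right)^{\frac{1}{\beta-\eta}}$ and decreases afterwards, deduce $t^+(u)<t_0(u)<t^-(u)$, and refine the $t^+$ estimate by the chord bound $\psi_u(t)+\lambda A(u)>\frac{\beta-\eta}{\beta-\alpha}t^{\eta-\alpha}E(u)$ on $(0,t_0(u))$---is sound, and it is only a mild variant of the paper's argument, which instead combines $\varphi_{u,\lambda}'(t)=0$ with the sign of $\varphi_{u,\lambda}''(t)$ at $t^{\pm}(u)$, eliminating $B(u)$ to bound $t^+(u)$ and $A(u)$ to bound $t^-(u)$; your route has the small advantage of never invoking second derivatives. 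The genuine problem is that the inequalities you prove are not the ones in the statement, although you claim they are. Your lower bound for $t^-(u)$ is $\left(\frac{\eta-\alpha}{\beta-\alpha}\frac{E(u)}{B(u)}\right)^{\frac{1}{\beta-\eta}}$, whereas the lemma asserts the bound with the reciprocal constant $\frac{\beta-\alpha}{\eta-\alpha}$; since $\frac{1}{\beta-\eta}>0$, the asserted bound is strictly stronger, so the sentence ``the right-hand inequality is already the asserted lower bound'' is not correct as a comparison with the printed statement. Likewise your upper bound for $t^+(u)$ carries the constant $\frac{\beta-\alpha}{\beta-\eta}$, not the printed $\frac{\eta-\alpha}{\eta-\beta}$.

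Having said that, the defect lies in the printed statement rather than in your calculus: $\frac{\eta-\alpha}{\eta-\beta}<0$, so the first bound is meaningless as written, and the second one fails in examples (take $\alpha=1$, $\eta=2$, $\beta=4$, $E(u)=B(u)=1$ and $\lambda A(u)$ small; then $t^-(u)<1$ while $\left(\frac{\beta-\alpha}{\eta-\alpha}\right)^{\frac{1}{\beta-\eta}}=\sqrt{3}$). Carrying out the paper's own second-derivative computation with the signs kept straight reproduces exactly your constants: at $t^+(u)$ one gets $\lambda(\beta-\alpha)t^{\alpha}A(u)>(\beta-\eta)t^{\eta}E(u)$, i.e. $t^+(u)^{\eta-\alpha}<\lambda\frac{\beta-\alpha}{\beta-\eta}\frac{A(u)}{E(u)}$, and at $t^-(u)$ one gets $(\eta-\alpha)t^{\eta}E(u)<(\beta-\alpha)t^{\beta}B(u)$, i.e. $t^-(u)>t_0(u)$. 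So what you have written is in substance a correct proof of the corrected lemma; the gap to close is simply to say so: state the bounds you actually obtain, and flag that the constants appearing in the lemma (and in the intermediate inequalities of the paper's proof) are garbled, rather than asserting that your bounds coincide with the printed ones.
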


\begin{proof} Indeed, let $\varphi_{\lambda,u}(t)=\Phi_\lambda(tu)$ for $t>0$ and $u\in X\setminus\{0\}$. If $\varphi_{\lambda,u}'(t)=0<\varphi_{\lambda,u}''(t)$ then
	\begin{equation*}
	(\beta-\eta)t^\eta E(u)-\lambda(\alpha-\eta)t^\alpha A(u)>0, 
	\end{equation*}
	which implies that
	\begin{equation*}
	t^+(u)=t<\left(\lambda\frac{\eta-\alpha}{\eta-\beta}\frac{A(u)}{E(u)}\right)^{\frac{1}{\eta-\alpha}}.
	\end{equation*}
	Now, if $\varphi_{\lambda,u}'(t)=0>\varphi_{\lambda,u}''(t)$ then
	\begin{equation*}
	(\beta-\alpha)t^\eta E(u)-(\eta-\alpha)t^\beta B(u)<0, 
	\end{equation*}
	so
	\begin{equation*}
	t^-(u)=t>\left(\frac{\beta-\alpha}{\eta-\alpha}\frac{E(u)}{B(u)}\right)^{\frac{1}{\beta-\eta}}.
	\end{equation*}
\end{proof}

\medskip

\end{document}